\documentclass[10pt,a4paper,twoside,reqno]{amsart}

\usepackage{xcolor,soul,lipsum}
\usepackage{hyperref,url}
\usepackage{amssymb,latexsym}
\usepackage{amsmath,amsthm}
\usepackage{enumerate}
\usepackage[all]{xy} \CompileMatrices
\usepackage{amscd}
\hypersetup{colorlinks=false,pdfborderstyle={/S/U/W 0}}
\usepackage{slashed} 
\usepackage{tikz}
\usetikzlibrary{matrix}
\usepackage[normalem]{ulem}
\SelectTips{cm}{12} 
\theoremstyle{plain}
\newtheorem{theorem}{Theorem}[section]
\newtheorem{lemma}[theorem]{Lemma}
\newtheorem{corollary}[theorem]{Corollary}
\newtheorem{proposition}[theorem]{Proposition}

\theoremstyle{definition}
\newtheorem{definition}[theorem]{Definition}
\newtheorem{definition-theorem}[theorem]{Definition-Theorem}
\newtheorem{example}[theorem]{Example}
\theoremstyle{remark}
\newtheorem{remark}[theorem]{Remark}

\numberwithin{equation}{section} \setcounter{tocdepth}{1}

\setcounter{tocdepth}{1}
\setlength{\oddsidemargin}{25pt} \setlength{\evensidemargin}{25pt}
\setlength{\textwidth}{400pt} \setlength{\textheight}{650pt}
\setlength{\topmargin}{0pt}
\usepackage{geometry}
\usepackage{enumitem}

\usepackage[utf8]{inputenc}
\usepackage{color}




\newcommand{\tr}{\operatorname{tr}}
\newcommand{\Id}{\operatorname{Id}}

\newcommand{\End}{\operatorname{End}}

\newcommand{\Ker}{\operatorname{Ker}}

\newcommand{\Ad}{\operatorname{Ad}}



\newcommand{\surj}{\to\kern-1.8ex\to}

\newcommand{\cF}{\mathcal{F}}

\newcommand{\cA}{\mathcal{A}}
\newcommand{\cK}{\mathcal{K}}

\newcommand{\cV}{\mathcal{V}}
\newcommand{\Diff}{\mathrm{Diff}}
\newcommand{\cC}{\mathcal{C}}
\newcommand{\Conf}{\mathrm{Conf}}
\newcommand{\Sol}{\mathrm{Sol}}
\newcommand{\Met}{\mathrm{Met}}
\newcommand{\s}{\mathrm{S}}

\newcommand{\G}{\mathrm{G}}
\newcommand{\cH}{\mathcal{H}}
\newcommand{\cE}{\mathcal{E}}
\newcommand{\cO}{\mathcal{O}}

\newcommand{\arxiv}[1]{{\tt
		\href{http://www.arXiv.org/abs/#1}{arXiv:#1}}}
\newcommand{\norm}[1]{\left\lVert#1\right\rVert}
\newcommand{\cS}{\mathcal{S}}
\newcommand{\eqdef}{\stackrel{{\rm def.}}{=}}
\newcommand{\cI}{\mathcal{I}}
\newcommand{\dd}{\mathrm{d}}

\newcommand{\mF}{\mathrm{F}}
\newcommand{\mR}{\mathcal{R}}
\newcommand{\frg}{\mathfrak{g}}
\newcommand{\frc}{\mathfrak{c}}
\newcommand{\frv}{\mathfrak{v}}
\newcommand{\frs}{\mathfrak{s}}
\newcommand{\fra}{\mathfrak{a}}

\begin{document}

\title[Heterotic solitons on four-manifolds]{Heterotic solitons on four-manifolds}
\author[Andrei Moroianu]{Andrei Moroianu}
\author[\'Angel Murcia]{\'Angel Murcia}
\author[C. S. Shahbazi]{C. S. Shahbazi}

\address{Universit\'e Paris-Saclay, CNRS,  Laboratoire de math\'ematiques d'Orsay, 91405, Orsay, France}
\email{andrei.moroianu@math.cnrs.fr}

\address{Instituto de F\'isica Te\'orica  UAM/CSIC, Espa\~na}
\email{angel.murcia@csic.es}

\address{Fachbereich Mathematik, Universit\"at Hamburg, Deutschland}
\email{carlos.shahbazi@uni-hamburg.de}

\thanks{C.S.S. would like to thank J. Streets and Y. Ustinovskiy for their useful comments on the notion of generalized Ricci soliton. Part of this work was undertaken during a visit of C.S.S. to the University Paris-Saclay under the Deutsch-Franz\"osische  Procope Mobilit\"at program. C.S.S. would like to thank A. Moroianu and this very welcoming institution for providing a nice and stimulating working environment. The work of \'A.M. was funded by the Spanish FPU Grant No. FPU17/04964, with additional support from the MCIU/AEI/FEDER UE grant PGC2018-095205-B-I00 and the Centro de Excelencia Severo Ochoa Program grant SEV-2016-0597. The work of C.S.S. was supported by the Germany Excellence Strategy \emph{Quantum Universe} - 390833306.}

\maketitle

\begin{abstract}
We investigate four-dimensional Heterotic solitons, defined as a particular class of solutions of the equations of motion of Heterotic supergravity on a four-manifold $M$ or, equivalently, as self-similar points of the renormalization group flow of the NS-NS sector of the Heterotic world-sheet. Heterotic solitons depend on a parameter $\kappa$ and consist of a Riemannian metric $g$, a metric connection with skew torsion $H$ on $TM$ and a closed 1-form $\varphi$ on $M$ satisfying a differential system that generalizes the celebrated Hull-Strominger system. In the limit $\kappa \to 0$, Heterotic solitons reduce to a class of generalized Ricci solitons and can be considered as a higher-order curvature modification of the latter. If the torsion $H$ is equal to the Hodge dual of $\varphi$, Heterotic solitons consist of either flat tori or closed Einstein-Weyl structures on manifolds of type $S^1\times S^3$ as introduced by P. Gauduchon. We prove that the moduli space of such closed Einstein-Weyl structures is isomorphic to the product of $\mathbb{R}$ with a certain finite quotient of the Cartan torus of the isometry group of the typical fiber of a natural fibration $M\to S^1$. We also consider the associated space of essential infinitesimal deformations, which we prove to be obstructed. More generally, we characterize several families of Heterotic solitons as suspensions of certain three-manifolds with prescribed constant principal Ricci curvatures, amongst which we find hyperbolic manifolds, manifolds covered by $\widetilde{\mathrm{Sl}}(2,\mathbb{R})$ and E$(1,1)$ or certain Sasakian three-manifolds. These solutions exhibit a topological dependence in the string slope parameter $\kappa$ and yield, to the best of our knowledge, the first examples of Heterotic compactification backgrounds not locally isomorphic to supersymmetric compactification backgrounds. 
\end{abstract}

\setcounter{tocdepth}{1} 


\section{Introduction}
\label{sec:intro}


The goal of this article is to investigate a system of partial differential equations, which we call the \emph{Heterotic soliton system}, that occurs naturally as part of the equations of motion of the bosonic sector of Heterotic supergravity in four dimensions \cite{BRI,BRII,Ortin}. The latter is defined on a principal bundle $P$ over a four-manifold $M$ and involves a Riemannian metric $g$ on $M$, a pair $(\varphi,\alpha)$ of 1-forms on $M$ and a connection $A$ on $P$ coupled through a system of highly non-linear partial differential equations completely determined by \emph{supersymmetry}. The bosonic sector of Heterotic supergravity generalizes the Einstein-Yang-Mills system and contains, through its Killing spinor equations, the celebrated Hull-Strominger system \cite{Hull,Strominger}. Despite the fact that the four-dimensional supersymmetric solutions of Heterotic supergravity have been fully classified in \cite{GFRST,Strominger}, the classification of all possibly non-supersymmetric solutions of the theory on a compact four-manifold seems to be currently out of reach and in fact, and to the best of our knowledge, no non-locally supersymmetric compactification background of Heterotic supergravity was known prior to this work. On the other hand, in Euclidean dimensions higher than four, the existence, uniqueness and moduli problems of Heterotic supersymmetric solutions remain wide open and have attracted extensive attention in the physics as well as in the mathematics literature, see for instance the reviews \cite{Fei,GFR,Grana:2005jc,Phong,YauIII} and their references and citations for more details. In this regard, Yau's conjecture on the existence of solutions to the Hull-Strominger system on certain polystable holomorphic vector bundles over compact balanced complex manifolds stands as an outstanding open problem in the field \cite{YauV,YauVI,YauIV}. 

Given the complexity of the four-dimensional full-fledged Heterotic bosonic sector, in this work we propose an educated truncation which is obtained by taking the structure group of the \emph{gauge bundle} $P$ to be trivial, that is, $P = M$. With this assumption, the bosonic sector of Heterotic supergravity reduces to a system of partial differential equations for a Riemannian metric and a pair of 1-forms $\varphi$ and $\alpha$ on a four-manifold $M$  which we call the \emph{Heterotic soliton system (see Definition \ref{def:HeteroticSoliton}) and which} is given by:
\begin{eqnarray*}
&\mathrm{Ric}^{g} +  \nabla^{g}\varphi + \frac{1}{2} \alpha\otimes \alpha - \frac{1}{2} \vert \alpha \vert^2_g\, g + \kappa \,\mathfrak{v}(\mR_{\nabla^{\alpha}} \circ\mR_{\nabla^{\alpha}}) = 0\, , \quad \dd\alpha = \varphi\wedge \alpha\\ 
& \delta^g \varphi + |\varphi|^2_g + \kappa \,|\mR_{\nabla^{\alpha}}|^2_{g,\mathfrak{v}}   = |\alpha|^2_g \, , \qquad \delta^g \alpha = \kappa \,(|\mR^{+}_{\nabla^{\alpha}}|^2_{g,\mathfrak{v}}  - |\mR^{-}_{\nabla^{\alpha}}|^2_{g,\mathfrak{v}})\, ,
\end{eqnarray*}

\noindent
where $\nabla^{\alpha}$ denotes the unique metric connection on $(M,g)$ with skew-symmetric torsion $-(\ast_g \alpha)$ and $\mathfrak{v}(- \circ_g -)$ is a bilinear algebraic operation introduced in Section \ref{subsec:preliminaries}. Solutions of this system are by definition \emph{Heterotic solitons} on $M$, see Definition \ref{def:HeteroticSoliton}. Heterotic solutions depend on a non-negative constant $\kappa\geq 0$, which corresponds physically to the \emph{slope parameter} of the Heterotic string to which the theory corresponds. In the limit $\kappa \to 0$, Heterotic solitons reduce to a particular class of generalized Ricci solitons as introduced in \cite{FernandezStreets}. The latter can be understood as stationary points of generalized Ricci flow \cite{Oliynyk:2005ak,Streets1,Streets2}, which originates through the renormalization group flow of the NS-NS string at one loop \cite[Page 111]{Polchinski:1998rq} and in the \emph{Hamilton gauge} \cite{Oliynyk:2005ak}. In the same vein, Heterotic solitons correspond to self-similar solutions of the generalized Ricci flow corrected by higher loops in $\kappa$, which turn out to introduce higher curvature terms in the system of equations. Therefore, Heterotic solitons can be understood as a natural extension of the notion of generalized Ricci solitons in the context of Heterotic string theory. The investigation of flow equations inspired by supergravity and superstring theories is an increasingly active topic of research in the mathematics literature, see \cite{Fei:2018mzf,Fei:2020kkl,Fei:2020nwv,Phong:2018wgn,PhongPicardZhang,StreetsA} and references therein, and the characterization of the renormalization group flow whose self-similar solutions are Heterotic solitons is currently work in progress and has already produced a novel curvature flow \cite{HeteroticRicciFlow}.  

Having introduced the notion of Heterotic soliton, which seems to be new in the literature, our first goal is to construct non-trivial examples and study the associated moduli space of solutions in simple cases. Heterotic solitons $(g,\varphi,\alpha)$ with $\varphi = \alpha$ can be easily proven to be manifolds of type $S^1\times S^3$ as introduced by P. Gauduchon in \cite{Gauduchon} (see Definition \ref{def:Gauduchon}), which in turn leads us to revisit Reference \cite{Pedersen} and reconsider the study of the moduli space of such manifolds. Our first result in this direction is the following.

\begin{theorem}
Let $\Sigma$ be a spherical three-manifold. The moduli space of manifolds of type $S^1\times S^3$ and class $\Sigma$ is in bijection with the direct product of $\mathbb{R}$ with a finite quotient of a maximal torus $T$ in the isometry group of $\Sigma$. In particular, the moduli space of manifolds of type $S^1\times S^3$ has dimension $1 + \mathrm{rk}(\mathrm{Iso}(\Sigma))$, where $\mathrm{rk}(\mathrm{Iso}(\Sigma))$ denotes the rank of $\mathrm{Iso}(\Sigma)$, that is, the dimension of any of its maximal torus subgroups.
\end{theorem}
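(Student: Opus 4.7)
The plan is to parametrize the manifolds of type $S^1\times S^3$ and class $\Sigma$ by pairs $(\ell,\phi)$ with $\ell$ a positive real number and $\phi$ an isometry of $\Sigma$, to describe the equivalence relation induced by isomorphism on such pairs, and then to invoke classical Lie theory to identify the resulting quotient.

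First, I would use the Gauduchon construction recalled in the body of the paper to show that every manifold of type $S^1\times S^3$ and class $\Sigma$ is isomorphic to a mapping torus $M_{\ell,\phi}\defeq(\mathbb{R}\times\Sigma)/\mathbb{Z}$ in which $\mathbb{Z}$ acts as $(t,x)\mapsto(t+\ell,\phi(x))$, for a unique choice (up to equivalence) of $\ell>0$ and $\phi\in\mathrm{Iso}(\Sigma)$. The metric is induced by the product $dt^2+g_\Sigma$ of the canonical round metric on $\Sigma$ with the flat metric on $\mathbb{R}$, and the closed $1$-form $\varphi$ is the descent of $dt$.

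Second, I would prove the equivalence relation: two such pairs $(\ell,\phi)$ and $(\ell',\phi')$ give rise to isomorphic manifolds of type $S^1\times S^3$ if and only if $\ell=\ell'$ and $\phi'=\psi\phi\psi^{-1}$ for some $\psi\in\mathrm{Iso}(\Sigma)$, possibly combined with the replacement $\phi\mapsto\phi^{-1}$ corresponding to reversing the orientation of the base $S^1$. The ``if'' direction is obvious: conjugation lifts to the self-map $(t,x)\mapsto(t,\psi(x))$ of $\mathbb{R}\times\Sigma$, which descends to an isometry of mapping tori. The ``only if'' direction uses that the fibration $M_{\ell,\phi}\to S^1=\mathbb{R}/\ell\mathbb{Z}$ is determined intrinsically, e.g.\ by the closed $1$-form $\varphi$, so that any isomorphism must respect it; one then argues that any such fiber-preserving map lifts to an affine transformation in the $t$-variable and to a family of isometries of $\Sigma$ that must all coincide by continuity.

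Third, the parameter $\ell\in\mathbb{R}_{>0}\cong\mathbb{R}$ produces the continuous factor $\mathbb{R}$, while the monodromy $\phi$ ranges over conjugacy classes in the compact Lie group $\mathrm{Iso}(\Sigma)$. Since every element of the identity component of a compact connected Lie group is conjugate to an element of a maximal torus $T$, and since two elements of $T$ are conjugate in $G$ precisely when they lie in the same orbit of the Weyl group, one obtains that the set of conjugacy classes is $T/W$. Enlarging $W$ to a finite group $F$ that also incorporates the action of the component group of $\mathrm{Iso}(\Sigma)$ and the involution $\phi\mapsto\phi^{-1}$ yields the claimed bijection with $\mathbb{R}\times(T/F)$. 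The dimension count $1+\dim T=1+\mathrm{rk}(\mathrm{Iso}(\Sigma))$ is then immediate.

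The main obstacle, I expect, is step two: establishing rigorously that every isomorphism between two such mapping tori is induced by a pair $(\ell\mapsto\pm\ell,\psi)$ as above. This requires a careful analysis of diffeomorphisms of $\mathbb{R}\times\Sigma$ preserving both the product metric and the foliation by $\Sigma$-slices, and showing that the reduction to $\phi$ inside the identity component can always be arranged by suitable choices of lifts and generators. Once this rigidity is in place, the rest of the argument is structural and reduces to standard facts from the theory of compact Lie groups.
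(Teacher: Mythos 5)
Your proposal follows essentially the same route as the paper's: present every manifold of class $\Sigma$ as a mapping torus $(\mathbb{R}\times\Sigma)/\langle(\lambda,\psi)\rangle$, prove a rigidity statement saying that two such quotients are isomorphic precisely when the translation lengths agree and the monodromies are conjugate in $\mathrm{Iso}(\Sigma)$ (this is Lemma \ref{lemma:isometricS1S3}), and then identify the set of conjugacy classes with a finite quotient of a maximal torus via the torus theorem (Theorem \ref{thm:bijection} together with the discussion of $\cI(\Sigma)=T/W(\Sigma,T)$). Your route to the rigidity step — using the intrinsically defined fibration, i.e.\ the parallel $1$-form, to force any isomorphism to be fiber-preserving and then splitting the lift — differs only cosmetically from the paper's covering-theoretic argument (the images of $\pi_1(\Sigma)$ are normal subgroups containing all torsion elements, hence coincide, so the isometry lifts to $\mathbb{R}\times\Sigma$ and splits as a product); both end by conjugating the generators of the deck groups.

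Two points deserve attention. First, the extra identification $\phi\mapsto\phi^{-1}$ you allow does not occur in the paper's setting: manifolds of type $S^1\times S^3$ are taken to be oriented \emph{and time-oriented}, so the canonical parallel $1$-form is part of the structure, the lift $\hat F=\hat F_0\times\mathfrak{f}$ acts by translations on the $\mathbb{R}$ factor, and Lemma \ref{lemma:isometricS1S3} yields plain conjugacy with no inversion; your hedge is harmless for the statement (one gets a finite quotient of $T$ either way) but it describes a slightly different moduli problem. Second, and more seriously, your claim that ``the reduction to $\phi$ inside the identity component can always be arranged by suitable choices of lifts and generators'' is not justified and is false in general: once the time orientation is fixed, the positive generator of the deck group modulo $\Gamma_0$ is determined up to conjugation, so the class of the monodromy in $\pi_0(\mathrm{Iso}(\Sigma))$ is an invariant of the quotient and cannot be pushed into the identity component. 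When $\mathrm{Iso}(\Sigma)$ is disconnected (which occurs for many spherical space forms in McCullough's classification), monodromies in other components need not be conjugate into $T$, so the set of conjugacy classes is not literally $T/F$ for a finite group $F$ acting on $T$. To be fair, the paper's own identification $\cI(\Sigma)=T/W(\Sigma,T)$ invokes the torus theorem, which also presupposes connectedness, so on this point your argument and the paper's share the same caveat; the bijection of Theorem \ref{thm:bijection} with $\mathbb{R}_{+}\times\cI(\Sigma)$ and the dimension count $1+\mathrm{rk}(\mathrm{Iso}(\Sigma))$ are unaffected.
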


\noindent
The reader is referred to Theorem \ref{thm:bijection} for more details. The previous theorem characterizes the moduli space of manifolds of type $S^1\times S^3$ \emph{globally}. Since such type of characterization is relatively rare in differential-geometric moduli problems, we perform in addition a local study of the moduli, characterizing its virtual tangent space $T_{[g,\varphi]}\mathfrak{M}^0_{\omega}(M)$ of  infinitesimal deformations that preserve the norm of $\varphi$, chosen to be 1, and the Riemannian volume $\omega$ form of $g$.  This eliminates trivial deformations such as constant rescalings of $\varphi$ and $g$, and is also called the vector space of \emph{essential} deformations, according to the terminology introduced by N. Koiso \cite{Koiso,KoisoII}.
\begin{theorem}
There exists a canonical bijection:
\begin{equation*}
T_{[g,\varphi]}\mathfrak{M}^0_{\omega}(M) \to   \cK(\Sigma)\, ,  
\end{equation*}
\noindent
where the Riemannian three-manifold $\Sigma$ is the typical fiber of the natural fibration structure determined by $(g,\varphi)$ on $M$ and $\cK(\Sigma)$ denotes the vector space of Killing vector fields of $\Sigma$.
\end{theorem}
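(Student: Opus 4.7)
The plan is to leverage the suspension structure provided by the preceding theorem: $(M, g, \varphi)$ is realized as the quotient of $(\Sigma \times \mathbb{R}, g_\Sigma + dt^2, dt)$ by the free $\mathbb{Z}$-action generated by $(x,t) \mapsto (\phi(x), t + L)$, for some isometry $\phi \in \mathrm{Iso}(\Sigma)$ of the spherical fiber $(\Sigma, g_\Sigma)$ and some $L > 0$. An essential infinitesimal deformation of $(g,\varphi)$ is then a pair $(h, \psi) \in \Gamma(S^2 T^*M) \times \Omega^1(M)$ solving the linearization of the Heterotic soliton system and satisfying the closedness $d\psi = 0$, the linearized norm constraint $2\langle \psi, \varphi\rangle_g = h(\varphi^\sharp, \varphi^\sharp)$, and the linearized volume constraint $\mathrm{tr}_g h = 0$, all considered modulo the infinitesimal diffeomorphism action $(h, \psi) \sim (h + \mathcal{L}_V g, \psi + \mathcal{L}_V \varphi)$, for $V$ a vector field on $M$.

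To construct the map $\cK(\Sigma) \to T_{[g,\varphi]}\mathfrak{M}^0_\omega(M)$, I would take a Killing vector field $X \in \cK(\Sigma) = \mathfrak{iso}(\Sigma)$ and deform the monodromy by setting $\phi_s = \phi \circ \exp(sX) \in \mathrm{Iso}(\Sigma)$. Each resulting suspension $M_s$ is diffeomorphic to $M$ through a canonical isotopy, so pulling back the suspension data produces a smooth family $(g_s, \varphi_s)$ of Heterotic soliton data on $M$ with $(g_0, \varphi_0) = (g, \varphi)$; the $s$-derivative at $s = 0$ yields an essential deformation, since each $\varphi_s$ is closed of unit length and each $g_s$ preserves the Riemannian volume form.

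For the inverse direction, I would exploit the gauge freedom to put $(h, \psi)$ into a normal form. Using $d\psi = 0$ together with the norm constraint, one can choose $V$ so that $\psi + \mathcal{L}_V \varphi = 0$; after gauging $\psi$ away, the deformation preserves the fibration $M \to S^1$ to first order. Lifting $h$ to $\Sigma \times \mathbb{R}$ and expanding it in Fourier modes along $t$ twisted by the monodromy, one analyses the linearized Heterotic system mode by mode. The rigidity of the constant-sectional-curvature metric on $\Sigma$, combined with the volume constraint, forces the tangential component of $h$ to come from an infinitesimal isometry of $\Sigma$, while the mixed $\Sigma$--$t$ components encode precisely an element of $\mathfrak{iso}(\Sigma) = \cK(\Sigma)$ interpreted as the infinitesimal change in monodromy, recovering $X$.

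The main obstacle will be the surjectivity step: ruling out any nontrivial $t$-dependent Fourier modes and any internal deformations of the spherical fibers that are not of Killing type. This requires combining rigidity of the spherical fiber metric, the elliptic structure of the gauge-fixed Heterotic system, the closedness of $\varphi$, and a careful monodromy-twisted spectral decomposition along the base $S^1$. Injectivity is comparatively soft once the monodromy encoding is set up. Finally, the excess $\dim \cK(\Sigma) - \rk(\mathrm{Iso}(\Sigma))$ with respect to the actual smooth moduli supplied by the previous theorem accounts precisely for the obstructed directions announced in the abstract: they correspond to infinitesimal modifications of $\phi$ by non-Cartan components of $\mathfrak{iso}(\Sigma)$, which solve the linearized system but fail to integrate to genuine paths in $\mathfrak{M}^0_\omega(M)$.
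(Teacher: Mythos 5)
Your forward map (deforming the monodromy by $\exp(sX)$ and differentiating the pulled-back suspension data) is a nice geometric alternative to the paper's direct assignment, and it does produce the right objects: in the product case one computes $\dot g=\tfrac{1}{\lambda}\,\varphi\odot X^{\flat}$, $\dot\varphi=0$, which is exactly the paper's essential deformation $\tau=\varphi\odot\beta$. (You should still check that this derivative satisfies the slice conditions $(\nabla^g)^{\ast}\tau=0$, $\delta^g\eta=0$ and $\mathrm{Tr}_g\tau=0$, since the virtual tangent space is defined through them, not merely through $\Ker(\dd_{(g,\varphi)}\cE)$.) The genuine gap is in the converse direction, which is the actual content of the theorem. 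Your gauge-fixing step is wrong as stated: since $\varphi$ is closed, the infinitesimal diffeomorphism action on the $1$-form is $\mathcal{L}_V\varphi=\dd(\iota_V\varphi)$, an \emph{exact} form, so you can never gauge away the harmonic part of $\psi$. Because $b^1(M)=1$ and $\varphi$ is parallel (hence harmonic), the slice condition $\delta^g\psi=0$ together with $\dd\psi=0$ forces $\psi=\lambda\varphi$, and the vanishing of $\lambda$ is \emph{not} a gauge choice: in the paper it comes out of the linearized Einstein-type equation (which gives $\mathfrak{f}=2\lambda$ for the $\varphi\otimes\varphi$-component of $\tau$) combined with the trace-free constraint coming from the fixed volume form. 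Your plan has no mechanism producing this, so "after gauging $\psi$ away" is a step that fails.

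The surjectivity analysis itself is also only announced, and that is where the work lies. The paper decomposes $\tau=\mathfrak{f}\,\varphi\otimes\varphi+\varphi\odot\beta+\tau^{\perp}$, uses the slice condition to turn $\mathcal{L}_{(\varphi^{\sharp}\lrcorner\tau)^{\sharp}}g=\mathcal{L}_{\varphi^{\sharp}}\tau$ into $\dd\mathfrak{f}=0$, $\nabla^g_{\varphi^{\sharp}}\beta=0$ and $\mathcal{L}_{\varphi^{\sharp}}\tau^{\perp}=\mathcal{L}_{\beta^{\sharp}}h$, and then computes the Lichnerowicz Laplacian to decouple the linearized equation into $\Delta^h_L\tau^{\perp}=\tau^{\perp}$, $(\nabla^g)^{\ast}\nabla^g\beta=\tfrac12\beta$ and $\mathfrak{f}=2\lambda$; the crucial input killing $\tau^{\perp}$ is Koiso's theorem that Einstein metrics covered by the round sphere admit no essential Einstein deformations. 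Invoking "rigidity of the spherical fiber metric" and a monodromy-twisted Fourier decomposition is the right instinct but does not yet rule out the $t$-dependent modes or the non-Killing fiber modes; you acknowledge this is "the main obstacle," so the proposal is a plan rather than a proof at its decisive point. Finally, note a tension in your last paragraph: your own forward map realizes \emph{every} $X\in\cK(\Sigma)$ as the velocity of an actual curve of solutions (the deformed suspensions), so the claim that non-Cartan directions "fail to integrate to genuine paths" needs to be interpreted carefully at the level of the slice and of the quotient $\mathfrak{M}^0_{\omega}(M)$, whose local dimension is smaller than $\dim\cK(\Sigma)$ because adjoint-orbit directions of the monodromy produce gauge-redundant families.
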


\noindent
In particular, the previous result implies that the infinitesimal deformations of manifolds of type $S^1\times S^3$ are in general obstructed. The reader is referred to Theorem \ref{thm:infbijection} for more details. The Heterotic solitons obtained by imposing $\varphi = \alpha$ are all locally isomorphic to a supersymmetric solution, as a direct inspection of the classification presented in \cite{GFRST} shows. In order to obtain Heterotic solitons not locally isomorphic to a supersymmetric solution we consider instead Heterotic solitons such that $\varphi = 0$ (that is, the dilaton vanishes) and $\alpha \neq 0$. We obtain a classification result, which we summarize as follows. 

\begin{theorem}
Let $M$ be a compact and oriented four-manifold admitting a non-flat Heterotic soliton $(g,\alpha)$ with $\kappa>0$, vanishing dilaton, and parallel torsion. Then the kernel of $\alpha$ defines an integrable distribution whose leaves, equipped with the metric induced by $g$, are all isometric to an oriented Riemannian three-manifold $(\Sigma,h)$ satisfying one of the following possibilities:	
\begin{enumerate}[leftmargin=*]
\item There exists a double cover of $(\Sigma,h)$ that admits a Sasakian structure determined by $h$ as prescribed in Theorem \ref{thm:solutions}.

\item $(\Sigma,h)$ is isometric to a discrete quotient of either $\widetilde{\mathrm{Sl}}(2,\mathbb{R})$ or $\mathrm{E}(1,1)$ (the universal cover of the Poincar\'e group of two-dimensional Minkowski space) equipped with a left-invariant metric with constant principal Ricci curvatures given by $(0,0,-\frac{1}{2\kappa})$.
		
\item $(\Sigma,h)$ is a hyperbolic three-manifold.
\end{enumerate}
\end{theorem}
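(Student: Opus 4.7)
The strategy is to exploit the rigidity imposed by the joint assumptions $\varphi = 0$, parallel torsion, and non-flatness, in order to reduce the full four-dimensional soliton system to an algebraic equation for the Ricci tensor of a three-dimensional leaf. The first step is to specialize the Heterotic soliton equations to $\varphi = 0$ and simplify them using $\nabla H = 0$. In dimension four every parallel three-form is the Hodge dual of a parallel one-form, so I would write $H = \star \beta$ with $\beta$ a parallel, hence harmonic, one-form of constant length; its kernel is an integrable and totally geodesic rank-three distribution $\mathcal{D}$. The key preliminary claim is that $\alpha$ is proportional to $\beta$, so that $\ker\alpha = \mathcal{D}$. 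I would establish this by contracting the Ricci-type soliton equation with $\beta$: the parallelism of $\beta$ kills several terms, leaves only curvature contributions forced to vanish, and forces $\alpha$ to lie along $\beta$ unless $g$ is flat.

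Once the horizontal distribution $\mathcal{D}$ is identified, I would run the Gauss equation for its totally geodesic leaves: the induced Ricci tensor $\mathrm{Ric}^h$ of $(\Sigma,h)$ equals the restriction of the ambient Ricci to $\mathcal{D}$. Feeding the soliton equation back in, $\mathrm{Ric}^h$ becomes a specific linear combination of $h$, of $\alpha\otimes\alpha$ (with $\alpha$ now viewed on $\Sigma$), and of a quadratic contribution from $H$, all with coefficients depending only on $\kappa$ and the constant $|\alpha|^2$. This shows in particular that $(\Sigma,h)$ has \emph{constant principal Ricci curvatures}: at most two distinct eigenvalues $(\lambda,\lambda,\mu)$ with explicit expressions in terms of $\kappa$.

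The last step is to invoke the classification of three-dimensional Riemannian manifolds with constant principal Ricci curvatures. The multiplicity pattern of $(\lambda,\lambda,\mu)$ splits the analysis into exactly three branches. If the two eigenvalues differ and fit the pattern compatible with a contact metric structure, one recovers, after at most a double cover, the Sasakian three-manifolds described in Theorem \ref{thm:solutions}. If instead one eigenvalue is forced to vanish, the algebraic constraints left by the soliton equation single out the pair $(0,0,-\tfrac{1}{2\kappa})$ and, by the classical work of Milnor on left-invariant metrics on unimodular three-dimensional Lie groups, the only simply connected models realising this spectrum are $\widetilde{\mathrm{Sl}}(2,\mathbb{R})$ and the universal cover $\mathrm{E}(1,1)$ of the Poincar\'e group of two-dimensional Minkowski space. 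The degenerate case $\lambda = \mu$ yields an Einstein three-manifold, hence of constant sectional curvature, and non-flatness of the original soliton forces this curvature to be negative, producing a hyperbolic three-manifold.

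The main technical obstacle I anticipate is the algebraic bookkeeping in the second step: one must carefully decompose the ambient Ricci tensor into horizontal, vertical and mixed pieces with respect to $\mathcal{D}$, verify that the mixed components vanish identically (so that no twisting or warping of the leaves over the transverse $S^1$-direction remains), and check that the contribution of $H = \star \beta$ on the horizontal part is purely a constant multiple of $h$. Once this reduction is executed cleanly, ruling out spurious multiplicity patterns and matching the surviving pairs $(\lambda,\mu)$ to the known three-dimensional models is straightforward.
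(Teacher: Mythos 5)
Your overall skeleton (reduce to the leaves of $\ker\alpha$, show they have constant principal Ricci curvatures of the form $(\mu_1,\mu_1,\mu_2)$, then split into three branches) matches the paper, but two points deserve attention, one minor and one fatal. The minor one: since the paper sets $\alpha=-\ast H$, your auxiliary $1$-form $\beta$ with $H=\ast\beta$ is just $\pm\alpha$, so the ``key preliminary claim'' is vacuous; and the mechanism producing non-Einstein leaves is not ``a quadratic contribution from $H$'' (indeed $(H\circ H)$ restricted to a leaf is a multiple of $h$, and $\alpha$ vanishes on the leaves, so your description would only ever give Einstein leaves). What actually happens is that the higher-curvature term $\kappa\,\frv(\mR_{\nabla^{\alpha}}\circ\mR_{\nabla^{\alpha}})$, rewritten via Lemma \ref{lemma:formulasparallel} and the three-dimensional identity of Remark \ref{remark:formulacurvature3d}, turns the Einstein equation into a \emph{quadratic} polynomial equation for the Ricci endomorphism of the leaf; combined with the constraint $\kappa|\mR_{\nabla^{\alpha}}|^2_{g,\frv}=|\alpha|^2_g$ this forces the constant spectrum $(\mu_1,\mu_1,\mu_2)$ and the quantization $2\kappa|\alpha|^2_g\in\{1,2,3\}$ (Lemma \ref{lemma:equationsparallel}, Proposition \ref{prop:muconditions}). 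You assert the right conclusion, but the route you describe would not produce it.

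The genuine gap is your final step: you propose to ``invoke the classification of three-dimensional Riemannian manifolds with constant principal Ricci curvatures'' and then quote Milnor. No such classification exists in the generality you need. Constancy of the Ricci spectrum with eigenvalues $(\lambda,\lambda,\mu)$, $\lambda\neq\mu$, does \emph{not} imply local homogeneity: there are large (function-worth) families of complete, non-homogeneous metrics with constant distinct Ricci eigenvalues (Kowalski and others), so nothing forces the leaf to be a Lie group with a left-invariant metric, and Milnor's results only classify left-invariant metrics once a Lie group structure is already in hand. This is exactly where the paper has to work: for $\mu_2<0$ it introduces $\cC=\nabla^h\xi$, derives the structure equations of Lemma \ref{lemma:equationsC}, and then uses the \emph{compactness} of $M$ (a co-compact isometric action on the universal cover) to show that the functions $a,b$, which satisfy $\xi(\xi(a))=-\tfrac{\mu_2}{2}a$ along the geodesics of $\xi$, are bounded and hence vanish; only then do $\fra,\frs$ become constant, the frame $(\xi,u_1,u_2)$ closes into a unimodular Lie algebra, and the Killing form singles out $\widetilde{\mathrm{Sl}}(2,\mathbb{R})$ versus $\mathrm{E}(1,1)$. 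Likewise, in the branch $\mu_2>0$ the Sasakian structure of Theorem \ref{thm:solutions} is not quoted from a classification but constructed by hand out of $\cC$ (the tensors $\Psi$, $\eta_S$, $h_S$) and verified to be K-contact. Your plan never uses compactness after the reduction step, so as written it cannot exclude the non-homogeneous possibilities and the second and first branches of the statement are not established; only the Einstein branch (hyperbolic leaves) goes through as you describe.
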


\noindent
The reader is referred to Theorem \ref{thm:solutions} for more details and a precise statement of the result. The previous theorem can be used to obtain large families of Heterotic solitons with vanishing dilaton and parallel torsion, as summarized for instance in Corollaries \ref{cor:examples1} and \ref{cor:examplesSasakian}.

Due to the fact that Heterotic solitons constitute a particular class of Heterotic supergravity solutions, they are expected to inherit a \emph{generalized geometric} interpretation on a transitive Courant algebroid, as described in \cite{Ashmore:2019rkx,Coimbra:2014qaa,Garcia-Fernandez:2013gja,GFRTGauge} for the general bosonic sector of Heterotic supergravity. Adapting the framework developed in Op. Cit. to Heterotic solitons would yield a natural geometric framework, adapted to the symmetries of the system, to further investigate Heterotic solitons and their moduli. The power of this formalism is illustrated in \cite{StreetsYuryII,StreetsYury}, where generalized Ricci solitons were thoroughly studied in the framework of generalized complex geometry. The generalized geometry underlying Heterotic supergravity is also positioned to play a prominent role in the study of the T-duality \cite{Bergshoeff:1995cg,Baraglia:2013wua,Garcia-Fernandez:2016ofz} of Heterotic solitons, which is a fundamental tool to classify the latter and to generate new Heterotic solitons of novel topologies. In this context, a specially attractive case corresponds to considering left-invariant Heterotic solitons on four-dimensional Lie groups, where T-duality can be algebraically described \cite{delBarco:2017qol}. We plan to develop these ideas in future publications.


\section{Four-dimensional Heterotic supergravity}
\label{sec:Heterotic4d}



\subsection{Preliminaries}
\label{subsec:preliminaries}

 
Let $M$ be an oriented four-dimensional manifold and let $P$ be a principal bundle over $M$ with semi-simple and compact structure group $\G$. Denote by $\mathfrak{g}$ the Lie algebra of $\G$. We fix an invariant and positive-definite symmetric bilinear form $c\colon \mathfrak{g}\times\mathfrak{g}\to \mathbb{R}$ on $\mathfrak{g}$, and we denote by $\mathfrak{c}$ the inner product induced by $c$ on the adjoint bundle $\mathfrak{g}_P := P\times_{\Ad}\mathfrak{g}$ of $P$. We denote by $\cA_P$ the affine space of connections on $P$ and for every connection $A\in \cA_P$ we denote by $\cF_A\in \Omega^2(\frg_P)$ its curvature. For every Riemannian metric $g$ on $M$, we denote by $\mF_g(M)$ the bundle of oriented orthonormal frames defined by $g$ and the given orientation of $M$, and we denote by $\mathfrak{so}_g(M) :=  \mF_g(M)\times_{\Ad}\mathfrak{so}(4)$ its associated adjoint bundle of $\mathfrak{so}(4)$ algebras, which we will consider equipped with the positive-definite inner product $\mathfrak{v}$ yielded by the trace in $\mathfrak{so}(4)$. The curvature of a connection $\nabla$ on $\mF_g(M)$ will be denoted by $\mR_{\nabla}\in \Omega^2(\mathfrak{so}_g(M))$. Given $(M,P,\mathfrak{c})$ and a Riemannian metric $g$ on $M$, we define the following bilinear map:
\begin{equation*}
\mathfrak{c}(- \circ -)\colon \Omega^k(\mathfrak{g}_P)\times \Omega^k(\mathfrak{g}_P) \to \Gamma(T^{\ast}M\odot T^{\ast}M)\, ,
\end{equation*} 

\noindent
as follows:
\begin{equation*}
\mathfrak{c}(\alpha\circ \beta)(v_1,v_2) \eqdef \frac12((g\otimes \mathfrak{c})(v_1\lrcorner \alpha, v_2\lrcorner \beta) +(g\otimes \mathfrak{c})(v_2\lrcorner \alpha, v_1\lrcorner \beta))\, , 
\end{equation*}

\noindent
for every pair of vector fields $v_1,v_2\in \mathfrak{X}(M)$ and any pair of $k$-forms $\alpha, \beta \in \Omega^k(\mathfrak{g}_P)$ taking values in $\mathfrak{g}_P$. Here $g\otimes\mathfrak{c}(-,-)$ denotes the non-degenerate metric induced by $g$ and $\mathfrak{c}$ on the differentiable forms valued in $\mathfrak{g}_P$. In particular, for the curvature $\cF_A\in \Omega^2(\mathfrak{g}_P)$ of a connection $A\in\cA_P$ we have:
\begin{equation*}
\mathfrak{c}(\cF_A\circ \cF_A)(v_1,v_2) \eqdef (g\otimes \mathfrak{c})(v_1\lrcorner \cF_A, v_2\lrcorner \cF_A)\, , \qquad v_1, v_2 \in \mathfrak{X}(M)\, ,
\end{equation*} 
where $v_1\lrcorner \cF_A$ denotes the 1-form with values in $\mathfrak{g}_P$ obtained by evaluation of $v_1$ in $\cF_A$, and similarly for $v_2\lrcorner \cF_A$. If $\left\{ T_a \right\}$ denotes a local orthonormal frame on $\mathfrak{g}_P$ satisfying $\mathfrak{c}(T_a,T_b)=\delta_{ab}$ and $e_i$ denotes a local orthonormal frame of $(TM,g)$, then the expression above reads:
\begin{equation*}
\mathfrak{c}(\cF_A\circ \cF_A)(v_1,v_2) = \sum_{a,i}\cF_A^a(v_1,e_i)\, \cF_A^a(v_2,e_i)\, .
\end{equation*} 

\noindent
Therefore, in local coordinates $\left\{ x^i\right\}$, $i,j,k,m = 1,\hdots,4$, the previous equation corresponds to:
\begin{equation*}
\mathfrak{c}(\cF_A\circ \cF_A)(v_1,v_2) = \sum_{a} (\cF_A^a)_{im}\, (\cF_A^a)_{jk} \, g^{mk}\, .
\end{equation*} 

\noindent
Similarly, for a 3-form $H\in\Omega^3(M)$ we define:
\begin{equation*}
(H\circ H)(v_1,v_2) \eqdef g(v_1\lrcorner H, v_2 \lrcorner H)\, , \qquad v_1, v_2 \in \mathfrak{X}(M)\, ,
\end{equation*}

\noindent
which in local coordinates reads:
\begin{equation*}
(H\circ H)_{ij} = H_{ilm} H_{j}^{\,\,\, lm}\, .
\end{equation*}

\noindent
Note that the inner product induced by $g$ is to be understood in the sense of tensors (rather than forms). The analogous bilinear map:
\begin{equation*}
	\mathfrak{v}(- \circ -)\colon \Omega^k(\mathfrak{so}_g(M))\times \Omega^k(\mathfrak{so}_g(M)) \to \Gamma(T^{\ast}M\odot T^{\ast}M)\, ,
\end{equation*} 

\noindent
is defined identically to $\mathfrak{c}(- \circ -)$. In particular, in local coordinates we have:
\begin{equation*}
	\mathfrak{v}(\mR_{\nabla}\circ\mR_{\nabla})_{ij} = (\mR_{\nabla})_{iklm}(\mR_{\nabla})^{\,\,klm}_j  \, ,
\end{equation*} 

\noindent 
where $(\mR_{\nabla})_{iklm}$ is the local coordinate expression of the curvature tensor of the connection $\nabla$ on $\mF_g(M)$.

\begin{remark}
For any Riemannian metric $g$ and 3-form $H$ on $M$ we define the connection $\nabla^H$ on the tangent bundle $TM$ as the unique $g$-compatible connection on $M$ with totally antisymmetric torsion given by $- H$. The metric connection $\nabla^H$ is explicitly given in terms of the Levi-Civita connection $\nabla^g$ associated to $g$ as follows:
\begin{equation*}
\nabla^{H} = \nabla^g - \frac{1}{2} H^{\sharp}\, ,
\end{equation*}
	
\noindent
where:
\begin{equation*}
H^{\sharp}(v_1,v_2) = H(v_1,v_2)^{\sharp} = (v_2\lrcorner v_1 \lrcorner H)^{\sharp}\in TM\, , \qquad \forall\,\, v_1 , v_2 \in TM\, , 
\end{equation*}
	
\noindent
and $\sharp\colon T^{\ast}M \to TM$ is the musical isomorphism induced by $g$. 
\end{remark}


\subsection{The equations of motion}
\label{subsec:eqsofmotion}


 We introduce the bosonic sector of Heterotic supergravity through its equations of motion, which also admit a local lagrangian formulation that will not be relevant for our purposes.

\begin{definition}
\label{def:HeteroticSugra}
Let $\kappa > 0$ be a positive real constant. The bosonic sector of {\bf Heterotic supergravity} on $(M,P,\mathfrak{c})$ is defined through the following system of partial differential equations \cite{BRI,BRII,GFRSTII}:
\begin{eqnarray}
\label{eq:motionHetsugra}
& \operatorname{Ric}^{g} +  \nabla^{g}\varphi - \frac{1}{4} H \circ H - \kappa\, \mathfrak{c}(\cF_A \circ \cF_A) + \kappa\, \mathfrak{v}(\mR_{\nabla^{H}} \circ\mR_{\nabla^{H}}) = 0\, ,\nonumber\\
&\delta^g H +  \iota_{\varphi} H = 0\, , \quad \dd_A \ast \cF_A - \varphi\wedge \ast \cF_A  - \cF_A \wedge \ast H = 0\, , \\
&\delta^g\varphi + |\varphi|^2_g  - |H|^2_g -  \kappa\, |\cF_A|^2_{g,\mathfrak{c}} +  \kappa\, |\mR_{\nabla^{H}}|^2_{g,\mathfrak{v}} = 0\nonumber\, ,
\end{eqnarray}

\noindent
together with the \emph{Bianchi identity}:

\begin{equation}
\label{eq:BianchiT}
\dd H = \kappa (\mathfrak{c}\left( \cF_A \wedge \cF_A\right) - \mathfrak{v}(\mR_{\nabla^{H}}\wedge\mR_{\nabla^{H}} ))\, ,
\end{equation}

\noindent
for tuples $(g,H,\varphi,A)$, where $g$ is a Riemannian metric on $M$, $\varphi\in\Omega^{1}_{cl}(M)$ is a closed one form, $H\in \Omega^{3}(M)$ is a 3-form and $A\in \cA_P$ is a connection on $P$. Here the Hodge dual $\ast$ is defined with respect to $g$ and the induced Riemannian volume form.
\end{definition}

\noindent
The norms $\vert -\vert_g$, $\vert -\vert_{g,\frc}$ and $\vert -\vert_{g,\frv}$ are all taken as norms on forms by interpreting the curvatures $\cF_A$ and $\mR_{\nabla^{H}}$ as 2-forms taking values on the adjoint bundle of $P$ and $\mathrm{F}_g(M)$, respectively. This convention is delicate for $\mR_{\nabla^{H}}\in \Omega^2(\mathfrak{so}_g(M))$. In this case, $\mathfrak{so}_g(M) \subset \End(TM)$ is naturally isomorphic to $\Lambda^2 T^{\ast}M$ and $\mR_{\nabla^{H}}$ can be interpreted as a section of $\Lambda^2 T^{\ast}M\otimes \Lambda^2 T^{\ast}M$. Within this interpretation, the norm induced by $\frv$ is by definition the form norm in the first factor $\Lambda^2 T^{\ast}M$ and the tensor norm in the second factor $\Lambda^2 T^{\ast}M = \mathfrak{so}_g(M)$. Hence:
\begin{equation*}
|\mR_{\nabla^{H}}|^2_{g,\mathfrak{v}} = \frac{1}{2} \mathrm{Tr}_g (\mathfrak{v}(\mR_{\nabla^{H}} \circ\mR_{\nabla^{H}}))\, ,
\end{equation*}

\noindent
and, in local coordinates:
\begin{equation*}
|\mR_{\nabla^{H}}|^2_{g,\mathfrak{v}} = \frac{1}{2} (\mR_{\nabla^{H}})_{ijkl} (\mR_{\nabla^{H}})^{ijkl}	\, .
\end{equation*}

\noindent
Alternatively, and as mentioned earlier, $\frv$ can be defined as the norm induced by the form norm on 2-forms and the trace norm for elements in $\mathfrak{so}_g(M) \subset \End(TM)$.

\begin{remark}
Equations \eqref{eq:motionHetsugra} and \eqref{eq:BianchiT} are completely and unambiguously determined by supersymmetry, see for instance \cite{Ortin} and references therein for more details. In particular, these equations describe the low-energy dynamics of the massless bosonic sector of Heterotic string theory. The first equation in \eqref{eq:motionHetsugra} is usually called the \emph{Einstein equation}, the second equation in \eqref{eq:motionHetsugra} is usually called the \emph{Maxwell equation}, the third equation in \eqref{eq:motionHetsugra} is usually called the \emph{Yang-Mills equation} whereas the last equation in \eqref{eq:motionHetsugra} is usually called the \emph{dilaton equation}. The constant $\kappa$ is the \emph{string slope} parameter and has a specific physical interpretation which is not relevant for our purposes.
\end{remark}

\noindent
Suppose that $M$ admits spin structures. Given a tuple $(g,\varphi,H,A)$ as introduced above and a choice of $\mathrm{Spin}(4)$ structure $Q_g$, we denote by $\s_g$ the bundle of irreducible complex spinors canonically associated to $Q_g$. This is a rank-four complex vector bundle $\s_g$ which admits a direct sum decomposition:
\begin{equation*}
\s_g = \s^+_g \oplus \s^-_g\, , \qquad \s^{\pm}_g := \frac{1}{2}(\mathrm{Id}\mp  \nu_g) \s_g\, ,
\end{equation*}

\noindent
in terms of the rank-two chiral bundles $\s^+_g$ and $\s^-_g$. The symbol $\nu_g$ denotes the Riemannian volume form on $(M,g)$ acting by Clifford multiplication on $\s_g$.

\begin{definition}
We say that a tuple $(g,\varphi,H,A)$ solving Equation \eqref{eq:motionHetsugra} is a \emph{supersymmetric solution} of Heterotic supergravity if there exists a bundle of irreducible complex spinors $\s_g = \s^+_g \oplus \s^-_g$ on $(M,g)$ and a spinor $\epsilon\in \Gamma(\s^+_{g})$ such that the following equations are satisfied:
\begin{equation}
\label{eq:susytransHeterotic}
\nabla^{-H}\epsilon = 0\, , \qquad (\varphi - H)\cdot\epsilon = 0\, , \qquad \cF_A\cdot \epsilon = 0\, .
\end{equation}

\noindent
Equations \eqref{eq:susytransHeterotic} are called the \emph{Killing spinor equations} of Heterotic supergravity. For ease of notation we denote with the same symbol the canonical lift of $\nabla^{-H}$ (which has torsion $H$) to the spinor bundle $\s_g$.
\end{definition}

\begin{remark}
The existence of solutions to equations \eqref{eq:susytransHeterotic} may depend on the choice of spin structure on $M$, in the sense that a supersymmetric solution on $M$ with respect to a particular choice of spin structure may be non-supersymmetric with respect to a different choice of spin structure, see \cite{FigueroaGadhia} for more details and explicit examples of this situation.
\end{remark}

\begin{remark}
By a theorem of S. Ivanov \cite{SIvanov}, a quintuple $(g,\varphi,H,A,\epsilon)$ satisfying the Killing spinor equations and the Bianchi identity automatically satisfies all the equations of motion of Heterotic supergravity if and only if the connection $\nabla^H$ is an \emph{instanton}. 
\end{remark}

\noindent
The existence of Killing spinor equations compatible with the system \eqref{eq:motionHetsugra} and \eqref{eq:BianchiT}, in the sense specified in the previous remark, is a consequence of supersymmetry. More precisely, the Killing spinor equations are obtained by imposing the vanishing of the Heterotic supersymmetry transformations on a given bosonic background. We refer the reader to \cite{Gran:2018ijr,Ortin} and references therein for more details.

There is a large amount of meat to unpack in the partial differential equations that define Heterotic supergravity. In order to proceed further it is convenient to consider a reformulation of Heterotic supergravity that profits from the fact that we restrict the underlying manifold to be four-dimensional. For every tuple $(g,\varphi,H,A)$, we define $\alpha := -\ast H\in \Omega^1(M)$.

\begin{lemma}
\label{lemma:BianchiAnsatz}
A tuple $(g,\varphi,H,A)$ with $H=\ast\alpha$ satisfies the Bianchi identity if and only if:
\begin{equation*}
\frac{1}{\kappa} \delta^g\alpha = |\cF^{-}_A|^2_{g,\mathfrak{c}} - |\mR^{-}_{\nabla^{H}}|^2_{g,\mathfrak{v}} - |\cF^{+}_A|^2_{g,\mathfrak{c}} + |\mR^{+}_{\nabla^{H}}|^2_{g,\mathfrak{v}}\, ,
\end{equation*}
where:
\begin{equation*}
\cF^{+}_A := \frac{1}{2}(\cF_A + \ast \cF_A) \, ,\qquad \cF^{-}_A := \frac{1}{2}(\cF_A - \ast \cF_A) \, ,
\end{equation*}
	
\noindent
respectively denotes the self-dual and anti-self-dual projections of $\cF_A$, and similarly for $\mR^{\pm}_{\nabla^{H}}$.
\end{lemma}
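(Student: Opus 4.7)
The plan is to rewrite both sides of the Bianchi identity
\[
\dd H = \kappa\bigl(\mathfrak{c}(\cF_A\wedge\cF_A) - \mathfrak{v}(\mR_{\nabla^{H}}\wedge\mR_{\nabla^{H}})\bigr)
\]
as functions times the Riemannian volume form $\nu_g$, since on a four-manifold both sides live in $\Omega^4(M)\cong C^\infty(M)\cdot\nu_g$. Then the identity of $4$-forms becomes a scalar equation, which is what the statement asserts.

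First I would compute the left-hand side under the ansatz $H=\ast\alpha$. Since $\alpha\in\Omega^1(M)$ and the codifferential on a Riemannian four-manifold satisfies $\delta^g = -\ast\dd\ast$ on $1$-forms, one obtains $\dd H = \dd\ast\alpha = -(\delta^g\alpha)\,\nu_g$. This is a one-line check: it is the standard relation between $\dd\ast$ on $(n-1)$-forms and the codifferential in dimension $n=4$ (the choice of overall sign is the one consistent with $\delta^g$ being the formal adjoint of $\dd$).

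Next I would compute the right-hand side using the self-dual/anti-self-dual decomposition. The key observation is that for any $2$-form $\omega$ on an oriented Riemannian four-manifold, the identities $\ast\omega^{\pm}=\pm\omega^{\pm}$ together with $\omega\wedge\ast\omega=|\omega|_g^2\,\nu_g$ and the orthogonality $\langle\omega^+,\omega^-\rangle_g=0$ yield
\[
\omega\wedge\omega = |\omega^+|_g^2\,\nu_g - |\omega^-|_g^2\,\nu_g.
\]
Applying this componentwise with respect to a local orthonormal frame $\{T_a\}$ of $\mathfrak{g}_P$ (resp.\ of $\mathfrak{so}_g(M)$) turns $\mathfrak{c}(\cF_A\wedge\cF_A)$ into $\bigl(|\cF_A^+|^2_{g,\mathfrak{c}} - |\cF_A^-|^2_{g,\mathfrak{c}}\bigr)\,\nu_g$, and similarly for $\mathfrak{v}(\mR_{\nabla^{H}}\wedge\mR_{\nabla^{H}})$. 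The orthogonality of $\omega^+$ and $\omega^-$ is what makes the cross-terms vanish, and the bundle-valued case reduces to the scalar one because the pairings $\mathfrak{c}$ and $\mathfrak{v}$ commute with the Hodge $\ast$ operator, which only acts on the form indices.

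Combining the two sides and dividing by $\nu_g$, the Bianchi identity is equivalent to
\[
-\delta^g\alpha = \kappa\bigl(|\cF_A^+|^2_{g,\mathfrak{c}} - |\cF_A^-|^2_{g,\mathfrak{c}} - |\mR^+_{\nabla^{H}}|^2_{g,\mathfrak{v}} + |\mR^-_{\nabla^{H}}|^2_{g,\mathfrak{v}}\bigr),
\]
which, after rearrangement and division by $\kappa$, reproduces the claimed equation. I do not anticipate any real obstacle here; the proof is essentially a careful sign check. The only point that merits extra care is verifying the sign in $\dd\ast\alpha = -(\delta^g\alpha)\,\nu_g$ under the sign conventions adopted for $\ast\ast$ and $\delta^g$ in the paper, since flipping this sign would swap the roles of $(+)$ and $(-)$ in the final identity.
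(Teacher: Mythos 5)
Your proof is correct and takes essentially the same route as the paper's: both arguments hinge on the identity $\dd\ast\alpha=-(\delta^g\alpha)\,\nu_g$ (equivalently $\ast\,\dd H=-\delta^g\alpha$), the vanishing of the cross terms $\cF_A^{+}\wedge\cF_A^{-}$ and $\mR^{+}_{\nabla^{H}}\wedge\mR^{-}_{\nabla^{H}}$, and the relation $\omega^{\pm}\wedge\omega^{\pm}=\pm|\omega^{\pm}|^2_g\,\nu_g$ applied componentwise through $\mathfrak{c}$ and $\mathfrak{v}$. The only cosmetic difference is that the paper applies the Hodge star to the Bianchi identity to get a scalar equation, while you compare coefficients of $\nu_g$; the sign convention you flag is indeed the one used in the paper, so the roles of $(+)$ and $(-)$ come out as stated.
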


\begin{proof}
Using that $\cF^{+}_A \wedge \cF^{-}_A=0$ and $\mR^{+}_{\nabla^{H}}\wedge \mR^{-}_{\nabla^{H}}=0$, we compute:
\begin{eqnarray*}
&  - \frac{1}{\kappa} \delta^g \alpha =\frac{1}{\kappa} \ast \dd H = \ast\mathfrak{c}(\cF^{+}_A\wedge \cF_A^{+}) + \ast\mathfrak{c}(\cF^{-}_A\wedge \cF_A^{-}) - \ast\mathfrak{v}( \mR^{+}_{\nabla^{H}}\wedge \mR^{+}_{\nabla^{H}} ) -\ast\mathfrak{v}( \mR^{-}_{\nabla^{H}}\wedge \mR^{-}_{\nabla^{H}} ) 
\\ 
& = \ast\mathfrak{c}(\cF^{+}_A\wedge \ast \cF_A^{+}) - \ast\mathfrak{c}(\cF^{-}_A\wedge \ast \cF_A^{-}) - \ast\mathfrak{v}( \mR^{+}_{\nabla^{H}}\wedge \ast \mR^{+}_{\nabla^{H}} ) + \ast\mathfrak{v}( \mR^{-}_{\nabla^{H}}\wedge \ast \mR^{-}_{\nabla^{H}} )\\ 
& = \vert \cF_A^{+}\vert^2_{g,\mathfrak{c}} - \vert \cF_A^{-}\vert^2_{g,\mathfrak{c}} - |\mR^{+}_{\nabla^{H}}|^2_{g,\mathfrak{v}} + |\mR^{-}_{\nabla^{H}}|^2_{g,\mathfrak{v}}\, ,
\end{eqnarray*}
and hence we conclude.
\end{proof}

\noindent
On the other hand, regarding the Maxwell equation in \eqref{eq:motionHetsugra} we have:
\begin{equation*}
\delta^g H + \iota_{\varphi}H = \ast \dd \alpha +\iota_{\varphi}\ast\alpha =\ast (\dd \alpha -\varphi \wedge \alpha) = 0\, ,
\end{equation*}

\noindent
whence it is equivalent to $\dd \alpha = \varphi\wedge \alpha$. The previous computation together with Lemma \ref{lemma:BianchiAnsatz} proves that four-dimensional Heterotic supergravity, as introduced in Definition \ref{def:HeteroticSugra}, can be equivalently written as follows:

\begin{eqnarray}
\label{eq:motionHetsugrabox1}
&\mathrm{Ric}^{g} +  \nabla^{g}\varphi + \frac{1}{2} \alpha\otimes \alpha - \frac{1}{2} \vert \alpha \vert^2_g\, g + \kappa \,\mathfrak{v}(\mR_{\nabla^{\alpha}} \circ\mR_{\nabla^{\alpha}}) = \kappa\, \mathfrak{c}(\cF_A \circ \cF_A)\, , \quad \dd\alpha = \varphi\wedge \alpha\\ 
\label{eq:motionHetsugrabox2}
& \dd_A^{\ast}  \cF_A + \iota_{\varphi}\cF_A -  \iota_{\alpha} \ast \cF_A = 0 \, , \quad \delta^g \varphi + |\varphi|^2_g + \kappa |\mR_{\nabla^{\alpha}}|^2_{g,\mathfrak{v}}   = |\alpha|^2_g +  \kappa |\cF_A|^2_{g,\mathfrak{c}}\, , \\
\label{eq:BianchiTbox}
&\frac{1}{\kappa} \delta^g\alpha = |\cF^{-}_A|^2_{g,\mathfrak{c}} - |\mR^{-}_{\nabla^{\alpha}}|^2_{g,\mathfrak{v}} - |\cF^{+}_A|^2_{g,\mathfrak{c}} + |\mR^{+}_{\nabla^{\alpha}}|^2_{g,\mathfrak{v}}
\end{eqnarray}

\noindent
for tuples $(g,\varphi,\alpha,A)$, where by definition we have set $\nabla^{\alpha} := \nabla^{H}$ with $H = \ast \alpha$. To every solution $(g,\varphi,\alpha,A) $ of Heterotic supergravity we can associate a cohomology class $\sigma$ in $H^1(M,\mathbb{R})$ defined by $\sigma := [\varphi]\in H^1(M,\mathbb{R})$. We will call $\sigma$ the \emph{Lee class} of $(g,\varphi,\alpha,A)$. If a solution exists, the Bianchi identity immediately implies the following equation in $H^4(M,\mathbb{R})$:
\begin{equation*}
p_1(P) = p_1(M)\in H^4(M,\mathbb{R})\, ,
\end{equation*}

\noindent
that is, the first Pontryagin  class $p_1(P)$ of $P$ needs to be equal to the first Pontryagin class $p_1(M)$ of $M$ with real coefficients. This gives a simple topological obstruction to the existence of Heterotic solutions on a given triple $(M,P,\frc)$.

\begin{remark}
As we will see later, see for instance Section \ref{sec:NS-NSmoduli}, the topology and geometry of compact four-manifolds admitting solutions of Heterotic supergravity depends crucially on whether $\sigma = 0$ or $\sigma \neq 0$.
\end{remark}


\subsection{Relation to other formulations of Heterotic supergravity}


The formulation of the bosonic sector of Heterotic supergravity that we have considered in order to define the system \eqref{eq:motionHetsugra}--\eqref{eq:BianchiT} corresponds to a direct truncation of the Heterotic supergravity constructed in \cite{BRI,BRII}. Within this formulation of Heterotic supergravity, the \emph{higher order} terms of the theory are constructed through contractions of the curvature tensor $\mR_{\nabla^{H}}$ of the metric connection with torsion $\nabla^H$, sometimes called the \emph{Hull connection} \cite{Hull,delaOssa:2014msa}. It is however possible to obtain a consistent theory of Heterotic supergravity for which the higher order terms are constructed through the curvature tensor of a different fixed metric connection. The ambiguities associated with this choice of connection have been extensively discussed in the literature, both from the world-sheet perspective \cite{HullTownsend,Sen}, where the change of such connection can be shown to correspond to a certain field redefinition, and from the supergravity point of view \cite{delaOssa:2014msa,Hull,Melnikov:2014ywa}, where the change of such connection can be proven to correspond to a modification in the regularization scheme of the effective action. Alternatively, and as explained in the introduction, solutions to the differential system \eqref{eq:motionHetsugra}--\eqref{eq:BianchiT} can be understood as self-similar solutions of the renormalization group flow of the NS-NS sector of the Heterotic world-sheet at first order in the string slope parameter \cite{HeteroticRicciFlow}.


\section{Heterotic solitons and the moduli of manifolds of type $S^1\times S^3$}
\label{sec:NS-NSmoduli}


This section introduces the notion of \emph{Heterotic soliton} and develops the classification of NS-NS pairs, introduced below, which will lead us to study the global moduli space of \emph{manifolds of type $S^1\times S^3$} as defined by P. Gauduchon in \cite{Gauduchon}.


\subsection{Heterotic solitons}
\label{sec:NSNSsupergravity}


Assuming that $P$ is the trivial principal bundle over $M$, that is $P=M$, the triple $(M,P,\mathfrak{c})$ reduces to the oriented four-manifold $M$. In this case, the configuration space of four-dimensional Heterotic supergravity, which we denote by $\Conf_{\kappa}(M)$, consists of all triples of the form $(g,\varphi,\alpha)$, where $g$ is a Riemannian metric on $M$, $\varphi$ is a closed 1-form and $\alpha$ is a 1-form.  Four-dimensional Heterotic supergravity reduces to:
\begin{eqnarray}
	\label{eq:HeteroticRicci1}
	&\mathrm{Ric}^{g} +  \nabla^{g}\varphi + \frac{1}{2} \alpha\otimes \alpha - \frac{1}{2} \vert \alpha \vert^2_g\, g + \kappa \,\mathfrak{v}(\mR_{\nabla^{\alpha}} \circ\mR_{\nabla^{\alpha}}) = 0\, , \quad \dd\alpha = \varphi\wedge \alpha\\ 
	& \delta^g \varphi + |\varphi|^2_g + \kappa \,|\mR_{\nabla^{\alpha}}|^2_{g,\mathfrak{v}}   = |\alpha|^2_g \, , \qquad \delta^g \alpha = \kappa \,(|\mR^{+}_{\nabla^{\alpha}}|^2_{g,\mathfrak{v}}  - |\mR^{-}_{\nabla^{\alpha}}|^2_{g,\mathfrak{v}})\, ,
	\label{eq:HeteroticRicci2}
\end{eqnarray}

\noindent
for $(g,\varphi,\alpha)\in \Conf_{\kappa}(M)$. In the limit $\kappa \to 0$, the previous system recovers the \emph{generalized} Ricci soliton system in four dimensions \cite{FernandezStreets} and therefore can be considered as a natural generalization of the latter in the context of Heterotic string theory corrections to the effective supergravity action. We introduce now the following definition.
\begin{definition}
\label{def:HeteroticSoliton}
The (four-dimensional) \emph{Heterotic soliton system} consists of equations \eqref{eq:HeteroticRicci1} and \eqref{eq:HeteroticRicci2}. Solutions of the Heterotic soliton system are (four-dimensional) \emph{Heterotic solitons}.
\end{definition}

\noindent
If we further impose $\alpha =\varphi$ the Heterotic soliton system \eqref{eq:HeteroticRicci1}--\eqref{eq:HeteroticRicci2} further reduces to:
\begin{eqnarray}
	\label{eq:motionNSNS1}
	&\mathrm{Ric}^{g} +  \nabla^{g}\varphi + \frac{1}{2} \varphi\otimes \varphi - \frac{1}{2} \vert \varphi \vert^2_g\, g + \kappa\,\mathfrak{v}(\mR_{\nabla^{\varphi}} \circ\mR_{\nabla^{\varphi}}) = 0\, , \\ 
	\label{eq:motionNSNS2}
	& \delta^g\varphi + \kappa\,|\mR^{-}_{\nabla^{\varphi}}|^2_{g,\mathfrak{v}} = 0 \, , \qquad |\mR^{+}_{\nabla^{\varphi}}|^2_{g,\mathfrak{v}} = 0\, ,
\end{eqnarray}

\noindent
for pairs $(g,\varphi)$ consisting on a Riemannian metric $g$ on $M$ and a closed 1-form $\varphi\in \Omega^1_{cl}(M)$. Equations \eqref{eq:motionNSNS1} and \eqref{eq:motionNSNS2} define, in physics terminology, the so-called \emph{NS-NS supergravity}. Consequently, we will refer to pairs $(g,\varphi)$ solving \eqref{eq:motionNSNS1} and \eqref{eq:motionNSNS2} as \emph{NS-NS pairs}.


\subsection{Compact NS-NS pairs}
\label{sec:compactNSNSpairs}


Let $M$ be an oriented and connected four-manifold equipped with a NS-NS pair $(g,\varphi)$. Recall that the connection $\nabla^{\varphi}$ is an anti-self-dual instanton on the tangent bundle of $M$. We will say that a NS-NS pair is \emph{complete} if $(M,g)$ is a complete Riemannian four-manifold.
 
\begin{lemma}
Let $(g,\varphi)$ be a NS-NS pair on $M$. We have:
\begin{equation*}
\mathfrak{v}(\mR_{\nabla^{\varphi}} \circ\mR_{\nabla^{\varphi}}) = \frac{g}{2}|\mR^{-}_{\nabla^{\varphi}}|^2_{g,\mathfrak{v}}  \, ,
\end{equation*}

\noindent
and $(g,\varphi)$ satisfies:
\begin{equation}
\label{eq:conformalEinsteinNS}
\mathrm{Ric}^{g} +  \nabla^{g}\varphi + \frac{1}{2} \varphi\otimes \varphi - \frac{1}{2}( \vert \varphi \vert^2_g  +   \delta^g \varphi) g = 0\, ,
\end{equation}

\noindent
which is equivalent to the Einstein equation \eqref{eq:motionNSNS1} for $(g,\varphi)$.
\end{lemma}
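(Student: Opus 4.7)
The proof rests on two independent ingredients: the vanishing $|\mR^{+}_{\nabla^{\varphi}}|^{2}_{g,\mathfrak{v}} = 0$ from the second equation in \eqref{eq:motionNSNS2}, and a purely algebraic, four-dimensional identity for (anti-)self-dual two-forms. First, since the fiberwise inner product $\mathfrak{v}$ on $\mathfrak{so}_{g}(M)$ is positive definite, the constraint $|\mR^{+}_{\nabla^{\varphi}}|^{2}_{g,\mathfrak{v}} = 0$ forces $\mR^{+}_{\nabla^{\varphi}} \equiv 0$, so that $\mR_{\nabla^{\varphi}} = \mR^{-}_{\nabla^{\varphi}}$ is pointwise anti-self-dual in its $\Lambda^{2}T^{\ast}M$ factor.

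Next I would establish the following pointwise algebraic fact: for any anti-self-dual (or self-dual) two-form $\omega \in \Lambda^{2}_{\mp} T^{\ast}_{p}M$ on an oriented Riemannian four-manifold,
\[
g(v_1 \lrcorner \omega,\, v_2 \lrcorner \omega) \;=\; \tfrac{1}{2}\, |\omega|^{2}_{g}\, g(v_1, v_2), \qquad \forall\, v_1, v_2 \in T_{p}M.
\]
This is a classical four-dimensional identity; it may be verified by bringing $\omega$ to the canonical form $\lambda_{1}(e^{12} - e^{34}) + \lambda_{2}(e^{13} + e^{24}) + \lambda_{3}(e^{14} - e^{23})$ in an oriented orthonormal coframe and computing directly, or by invoking the fact that any non-zero element of $\Lambda^{2}_{\pm}T^{\ast}_{p}M$ is proportional to the K\"ahler form of a compatible orthogonal almost complex structure. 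Expanding the curvature in a local $\mathfrak{v}$-orthonormal frame $\{T_{a}\}$ of $\mathfrak{so}_{g}(M)$ as $\mR^{-}_{\nabla^{\varphi}} = \sum_{a} \omega_{a} \otimes T_{a}$ with $\omega_{a} \in \Omega^{2}_{-}(M)$, applying the identity to each $\omega_{a}$, and summing over $a$ yields
\[
\mathfrak{v}\bigl(\mR_{\nabla^{\varphi}} \circ \mR_{\nabla^{\varphi}}\bigr) \;=\; \sum_{a}\, \omega_{a} \circ \omega_{a} \;=\; \tfrac{1}{2}\Bigl(\sum_{a} |\omega_{a}|^{2}_{g}\Bigr)\, g \;=\; \tfrac{g}{2}\, |\mR^{-}_{\nabla^{\varphi}}|^{2}_{g,\mathfrak{v}},
\]
which is the first assertion of the lemma.

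Finally, substituting this identity into the Einstein equation \eqref{eq:motionNSNS1} reduces the $\kappa$-dependent curvature term to $\frac{\kappa}{2}|\mR^{-}_{\nabla^{\varphi}}|^{2}_{g,\mathfrak{v}}\, g$, and the dilaton equation in \eqref{eq:motionNSNS2} supplies the replacement $\kappa|\mR^{-}_{\nabla^{\varphi}}|^{2}_{g,\mathfrak{v}} = -\delta^{g}\varphi$; after this substitution \eqref{eq:motionNSNS1} takes exactly the form \eqref{eq:conformalEinsteinNS}. As every step is reversible modulo the remaining equations of \eqref{eq:motionNSNS2}, the equivalence with the Einstein equation follows. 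The only substantive step is the algebraic identity in the middle, a genuinely four-dimensional phenomenon reflecting the structure of $\Lambda^{2}_{\pm}T^{\ast}M$; beyond this the argument is routine bookkeeping.
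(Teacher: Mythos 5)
Your proof is correct and follows essentially the same route as the paper: deduce anti-self-duality of $\mR_{\nabla^{\varphi}}$ from $|\mR^{+}_{\nabla^{\varphi}}|^2_{g,\mathfrak{v}}=0$, expand the curvature in a $\mathfrak{v}$-orthonormal frame of $\mathfrak{so}_g(M)$, apply the pointwise four-dimensional identity for anti-self-dual $2$-forms (the paper states it as $\mR^a_{\nabla^{\varphi}}\circ\mR^a_{\nabla^{\varphi}}=\tfrac12|\mR^a_{\nabla^{\varphi}}|^2\,\mathrm{Id}_{TM}$, equivalent to your contraction identity), and then substitute the first equation of \eqref{eq:motionNSNS2} into \eqref{eq:motionNSNS1}. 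The only cosmetic difference is that you make explicit the positive-definiteness argument and the canonical-form verification that the paper leaves implicit.
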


\begin{proof}
Let $T_a$ denote a local basis of $\mathfrak{so}_g(M)$ satisfying $\mathfrak{v}(T_a,T_b) = \delta_{ab}$ and write $\mR_{\nabla^{\alpha}} = \sum_a\mR^a_{\nabla^{\alpha}}\otimes T_a$. Identifying each 2-form $\mR_{\nabla^{\alpha}}^a$ with a skew-symmetric endomorphism of $TM$ we have:
\begin{equation*}
	\mathfrak{v}(\mR_{\nabla^{\alpha}}\circ \mR_{\nabla^{\alpha}})(v_1,v_2) =  \sum_{a} g(\mR_{\nabla^{\alpha}}^a\circ \mR_{\nabla^{\alpha}}^a(v_1),v_2).
\end{equation*} 

\noindent
Using that $\mR_{\nabla^{\alpha}}$ is anti-self-dual, the same holds for each component $\mR_{\nabla^{\alpha}}^a$, and thus $\mR_{\nabla^{\alpha}}^a\circ \mR_{\nabla^{\alpha}}^a= \frac12|\mR_{\nabla^{\alpha}}^a|^2\mathrm{Id}_{TM}$. Hence, we obtain: 
\begin{equation*}
\mathfrak{v}(\mR_{\nabla^{\alpha}}\circ\mR_{\nabla^{\alpha}}) = \frac12|\mR_{\nabla^{\alpha}}|^2_{g,\mathfrak{v}}g\, .
\end{equation*} 
\noindent
The second part follows directly after substituting the first equation in \eqref{eq:motionNSNS2} into equation \eqref{eq:motionNSNS1}, upon use of the previous identity.
\end{proof}

\noindent
Equation \eqref{eq:conformalEinsteinNS} can be naturally interpreted in the framework of conformal geometry and Einstein-Weyl structures. Let $\cC$ be the conformal class of Riemannian metrics on $M$ containing $g$, and assume that  
\begin{equation*}
D g = -2\theta\otimes g\, .
\end{equation*}

\noindent
The Ricci curvature $\mathrm{Ric}^D$ of $D$ reads:
\begin{equation}
	\label{eq:Ricci1}
	\mathrm{Ric}^D=\mathrm{Ric}^g-2(\nabla^g\theta-\theta\otimes\theta)+(\delta^g\theta-2|\theta|_g^2)g\, .
\end{equation}

\noindent
Using the previous expression, we readily conclude that \eqref{eq:conformalEinsteinNS} is equivalent to $\mathrm{Ric}^{D}=0$, where $D$ is the Weyl connection on $(M,\cC)$ whose Lee form with respect to $g$ is  $\theta=-\frac\varphi{2}$. Consequently \eqref{eq:conformalEinsteinNS} is conformally invariant, in the sense that, given a NS-NS pair $(g,\varphi)$, every other metric $\tilde g=e^{f}g$ in the conformal class of $g$ satisfies: 
\begin{equation}
	\label{eq:conformalEinsteinNStilde}
	\operatorname{Ric}^{\tilde g} +  \nabla^{\tilde g}\tilde \varphi + \frac{1}{2}\tilde  \varphi\otimes\tilde \varphi-\frac12(|\tilde \varphi|_{\tilde g}^2+\delta^{\tilde g}\tilde \varphi)\tilde g = 0\, ,
\end{equation}

\noindent
for $\tilde\varphi:=\varphi+\dd f$. Recall that a closed Weyl structure is said to be \emph{closed Einstein-Weyl} if it satisfies \eqref{eq:conformalEinsteinNStilde}.

\begin{lemma}
\label{lemma:thetaparallel}
Let $(\cC,D)$ a closed Einstein-Weyl structure on a compact four-manifold $M$. Then, the Lee-form $\theta$ associated to the Gauduchon metric $g$ of $\cC$ is parallel.
\end{lemma}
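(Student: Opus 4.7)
The plan is to show that, in the Gauduchon gauge, the Lee form $\theta$ is harmonic and satisfies a simple algebraic identity with $\mathrm{Ric}^g$, and then to combine this with the Bochner--Weitzenb\"ock formula to conclude $\nabla^g\theta=0$ by an integration by parts on the compact manifold $M$.

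First I would specialize equation \eqref{eq:conformalEinsteinNStilde} to the Gauduchon gauge. Since the Weyl structure is closed, the Lee form $\theta=-\varphi/2$ is closed, and by definition of the Gauduchon metric on a compact manifold one has $\delta^g\theta=0$, equivalently $\delta^g\varphi=0$. Substituting these into \eqref{eq:conformalEinsteinNStilde} yields the symmetric tensor identity
\begin{equation*}
\mathrm{Ric}^g \;=\; 2\,\nabla^g\theta \;-\; 2\,\theta\otimes\theta \;+\; 2|\theta|^2_g\, g,
\end{equation*}
where $\nabla^g\theta$ is already symmetric because $\theta$ is closed. Evaluating this identity on $(\theta^\sharp,\theta^\sharp)$, the last two terms cancel, and using that $\nabla^g\theta$ is symmetric and $\nabla^g$ is metric one obtains
\begin{equation*}
\mathrm{Ric}^g(\theta^\sharp,\theta^\sharp) \;=\; 2(\nabla^g\theta)(\theta^\sharp,\theta^\sharp) \;=\; \nabla^g_{\theta^\sharp}|\theta|^2_g.
\end{equation*}

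Next, since $\theta$ is both closed and coclosed, it is harmonic, so the Bochner--Weitzenb\"ock formula gives $\nabla^{g,\ast}\nabla^g\theta+\mathrm{Ric}^g(\theta)=0$. Pairing with $\theta$ and integrating over the compact manifold $M$,
\begin{equation*}
\int_M |\nabla^g\theta|^2_g\,\mathrm{vol}_g \;=\; -\int_M \mathrm{Ric}^g(\theta^\sharp,\theta^\sharp)\,\mathrm{vol}_g \;=\; -\int_M \nabla^g_{\theta^\sharp}|\theta|^2_g\,\mathrm{vol}_g.
\end{equation*}
An integration by parts rewrites the right-hand side (up to sign) as $\int_M |\theta|^2_g\,\delta^g\theta\,\mathrm{vol}_g$, which vanishes in the Gauduchon gauge. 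Therefore $\nabla^g\theta\equiv 0$, i.e.\ $\theta$ is parallel.

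The substantive content is entirely concentrated in the cancellation that reduces $\mathrm{Ric}^g(\theta^\sharp,\theta^\sharp)$ to a total derivative of $|\theta|^2_g$; the remaining steps are standard. The only real risk is bookkeeping with the sign conventions relating $\varphi$ and $\theta$, the physicists' $\delta^g$ and the analysts' divergence, and the Weitzenb\"ock identity for $1$-forms. I do not expect any genuine obstacle.
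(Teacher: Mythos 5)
Your proof is correct, but it follows a genuinely different route from the one in the paper. The paper solves the Gauduchon-gauge Einstein--Weyl equation for $\nabla^g\theta$, pairs the result with $\nabla^g\theta$ in $L^2$, and then kills every term using the contracted Bianchi identity $(\nabla^g)^{\ast}\mathrm{Ric}^g=-\frac12\dd s^g$ together with the trace identity $s^g=6|\theta|^2_g$ and $\delta^g\theta=0$; no Weitzenb\"ock formula appears. You instead use only the $(\theta^{\sharp},\theta^{\sharp})$-component of the equation, which (correctly, since the last two terms cancel and metric compatibility gives $2(\nabla^g_{\theta^{\sharp}}\theta)(\theta^{\sharp})=\theta^{\sharp}(|\theta|^2_g)$) yields the pointwise identity $\mathrm{Ric}^g(\theta^{\sharp},\theta^{\sharp})=\nabla^g_{\theta^{\sharp}}|\theta|^2_g$, and then feed this into the Bochner--Weitzenb\"ock formula for the harmonic $1$-form $\theta$ (harmonicity being exactly closedness of the Weyl structure plus the Gauduchon gauge), so that $\int_M|\nabla^g\theta|^2_g\,\nu_g=-\int_M\langle\theta,\dd|\theta|^2_g\rangle_g\,\nu_g=-\int_M|\theta|^2_g\,\delta^g\theta\,\nu_g=0$. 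Your version is more modular: it isolates the single fact $\int_M\mathrm{Ric}^g(\theta^{\sharp},\theta^{\sharp})\,\nu_g=0$ as the only input needed beyond the standard vanishing argument for harmonic forms, and it never needs the scalar curvature or the Bianchi identity; the paper's version is self-contained at the level of first-order identities and avoids invoking the Weitzenb\"ock machinery. Both hinge on compactness (existence of the Gauduchon gauge and the integration by parts), and both are insensitive to the sign bookkeeping you flag, since the relevant boundary term vanishes identically in the Gauduchon gauge.
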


\begin{proof}
Let $(\cC,D)$ a closed Einstein-Weyl manifold and let $(g,\theta)$ be a Gauduchon representative, that is, $\theta$ is coclosed with respect to $g$. Hence, the pair $(g,\theta)$ satisfies: 
\begin{equation*}
\mathrm{Ric}^g-2(\nabla^g\theta-\theta\otimes\theta) -2|\theta|_g^2 g = 0\, .
\end{equation*}

Taking the trace in this equation and using the fact that $\theta$ is coclosed, we obtain that the scalar curvature $s^g$ of $g$ satisfies $s^g=6|\theta|_g^2$. Using the contracted Bianchi identity $(\nabla^g)^{\ast} \mathrm{Ric}^g = -\frac{1}{2} \dd s^g$ and the formula $(\nabla^g)^{\ast} (|\theta|_g^2 g)=-\dd|\theta|_g^2$, we then compute the total norm of $\nabla^g\theta$ with respect to $g$:
\begin{equation*}
	\norm{\nabla^g\theta}^2_g = \langle\nabla^g\theta, \frac{1}{2}\mathrm{Ric}^g +\theta\otimes\theta - |\theta|_g^2 g\rangle_g = \langle \theta, (\nabla^g)^{\ast}(\theta\otimes \theta)+\frac12\dd|\theta|_g^2\rangle_g= \langle \theta, (\nabla^g)^{\ast}(\theta\otimes \theta)\rangle_g\, .
\end{equation*}

On the other hand:
\begin{equation*}
	\langle \theta, (\nabla^g)^{\ast}(\theta\otimes \theta)\rangle_g = -\frac{1}{2} \int_M \langle \theta, \dd|\theta|_g^2\rangle_g \nu_g = 0\, ,
\end{equation*}

\noindent
where $\nu_g$ denotes the Riemannian volume volume form on $(M,g)$. Hence $\nabla^g\theta = 0$.
\end{proof}

\begin{proposition}
\label{prop:compactNSNS}
Assume $M$ is compact and admits a NS-NS pair $(g,\varphi)\in \Sol_{\kappa}(M)$, with associated Lee class $\sigma \in H^1(M,\mathbb{R})$.
\begin{enumerate}[leftmargin=*]
\item If $\sigma = [\varphi] = 0 \in H^1(M,\mathbb{R})$ then $(M,g)$ is flat and therefore admits a finite covering conformal to a flat torus.
		 
\item If $ 0 \neq \sigma = [\varphi] \in H^1(M,\mathbb{R})$, then $b^1(M) = 1$, and the universal Riemannian cover of $(M,g)$ is isometric to $\mathbb{R}\times S^3$ equipped with the direct product of the standard metric of $\mathbb{R}$ and the round metric on $S^3$ of sectional curvature $\frac{1}{4}\vert\varphi\vert^2_g$, where $\vert\varphi\vert_g$ is the point-wise constant norm of $\varphi$.
\end{enumerate}
\end{proposition}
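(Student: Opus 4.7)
The plan is to first extract structural information from compactness and then split into the two cases. The key observation I would start with is that integrating the scalar equation $\delta^g\varphi + \kappa|\mR^-_{\nabla^\varphi}|^2_{g,\frv} = 0$ over the compact manifold $M$ annihilates the divergence term, forcing $\mR^-_{\nabla^\varphi}=0$ pointwise; combined with the pointwise identity $|\mR^+_{\nabla^\varphi}|^2_{g,\frv} = 0$ from \eqref{eq:motionNSNS2}, this shows both that $\nabla^\varphi$ is flat and that $\delta^g\varphi = 0$ pointwise. Since $\varphi$ is closed by hypothesis, it is then $g$-harmonic. For Part (1), $\sigma = [\varphi] = 0$ forces $\varphi$ to be exact, and an exact harmonic $1$-form on a compact Riemannian manifold must vanish, so $\varphi = 0$. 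Then $H = \ast\varphi = 0$ implies $\nabla^\varphi = \nabla^g$, so the flatness of $\nabla^\varphi$ established above is flatness of $(M,g)$; Bieberbach's structure theorem then supplies the finite normal cover by a flat torus.

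For Part (2) the heart of the argument is to show that $\varphi$ is $\nabla^g$-parallel. Using the reduction of \eqref{eq:motionNSNS1} to \eqref{eq:conformalEinsteinNS} together with $\delta^g\varphi=0$, the Einstein equation becomes
\begin{equation*}
\mathrm{Ric}^g + \nabla^g\varphi + \frac{1}{2}\varphi\otimes\varphi - \frac{1}{2}|\varphi|^2_g\, g = 0.
\end{equation*}
Contracting this identity with $\varphi^\sharp\otimes\varphi^\sharp$, using that $d\varphi=0$ makes $\nabla^g\varphi = \frac{1}{2}\mathcal{L}_{\varphi^\sharp}g$ symmetric, and then integrating by parts using $\delta^g\varphi=0$, I obtain $\int_M \mathrm{Ric}^g(\varphi,\varphi)\,\nu_g = 0$. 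The Bochner-Weitzenb\"ock identity $\int_M\bigl(|\nabla^g\varphi|_g^2 + \mathrm{Ric}^g(\varphi,\varphi)\bigr)\,\nu_g = 0$ for the harmonic $1$-form $\varphi$ then forces $\nabla^g\varphi=0$. In particular $|\varphi|_g$ is a positive constant.

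Once $\varphi$ is parallel, $\varphi^\sharp$ is a nowhere-vanishing parallel vector field, and the de Rham decomposition theorem splits the universal Riemannian cover isometrically as $(\mathbb{R}\times\wt{N},\, dt^2\oplus\wt{h})$. Feeding $\nabla^g\varphi=0$ back into the Einstein equation yields $\mathrm{Ric}^g = \frac{1}{2}|\varphi|_g^2\,g - \frac{1}{2}\varphi\otimes\varphi$, which restricts on $\wt{N}$ to $\mathrm{Ric}^{\wt{h}} = \frac{1}{2}|\varphi|_g^2\,\wt{h}$. In dimension three, Einstein is equivalent to constant sectional curvature, so $\wt{N}$ has constant sectional curvature $\frac{1}{4}|\varphi|_g^2 > 0$; by Myers' theorem and simple connectedness $\wt{N}\cong S^3$ equipped with the round metric of that curvature. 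Cocompactness of $\pi_1(M)$ acting on $\mathbb{R}\times S^3$ then descends to a locally trivial fibration $M\to S^1$ with spherical space-form fiber $\Sigma$, and $b^1(M)=1$ follows from the Wang exact sequence together with $H^1(\Sigma;\mathbb{R})=0$.

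The main obstacle in this plan is the contraction-and-integration-by-parts step of Part (2): the non-symmetric-looking term $\nabla^g\varphi$ in the Einstein equation is tamed only because $\varphi$ is simultaneously closed (so that $\nabla^g\varphi = \frac{1}{2}\mathcal{L}_{\varphi^\sharp}g$) and coclosed (the nontrivial output of the first step); the crucial algebraic cancellation is that contracting with $\varphi^\sharp\otimes\varphi^\sharp$ causes the two quartic terms $-\frac{1}{2}|\varphi|^4_g$ and $\frac{1}{2}|\varphi|^4_g$ to cancel, leaving only a total divergence. After that, the identification of the universal cover is standard de Rham/Myers theory.
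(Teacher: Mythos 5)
Your proposal is correct, and its overall skeleton (coclosedness of $\varphi$ by integration, parallelism, de Rham splitting, three-dimensional Einstein $\Rightarrow$ round $S^3$) matches the paper's, but the two middle steps are handled by genuinely different means. For parallelism the paper goes through the Einstein--Weyl reformulation: Lemma \ref{lemma:thetaparallel} works in the Gauduchon gauge, takes the trace to get $s^g=6|\theta|_g^2$, and then uses the contracted Bianchi identity to show $\norm{\nabla^g\theta}_g^2=0$; you instead contract the Einstein equation with $\varphi^\sharp\otimes\varphi^\sharp$, observe the cancellation of the quartic terms so that $\int_M\mathrm{Ric}^g(\varphi,\varphi)\,\nu_g=0$, and conclude by the Bochner--Weitzenb\"ock formula for the harmonic form $\varphi$. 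Both arguments are integration arguments of comparable length; yours avoids the conformal/Weyl machinery entirely (at the cost of not exhibiting the conformal invariance that the paper exploits elsewhere), while the paper's lemma is stated for arbitrary closed Einstein--Weyl structures and is reused conceptually in Section \ref{sec:ModuliS1S3}. A second difference: you extract $\mR^{-}_{\nabla^\varphi}=0$, $\mR^{+}_{\nabla^\varphi}=0$ and $\delta^g\varphi=0$ at the outset by integrating the first equation of \eqref{eq:motionNSNS2}, so the $\kappa$-term drops out of \eqref{eq:motionNSNS1} immediately; the paper leaves the coclosedness step implicit and instead verifies at the end that the connection with torsion $|\varphi|_g\ast_{\hat g}\dd t$ on $\mathbb{R}\times S^3$ is flat, so that \eqref{eq:motionNSNS2} holds automatically. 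Your version is arguably cleaner for the ``only if'' direction being proved here. Finally, you actually prove $b^1(M)=1$ (via the fibration over $S^1$ and the Wang sequence), which the paper asserts without proof; note that the fibration structure needs the Cheeger--Gromoll/Gauduchon-type argument cited in Section \ref{sec:ModuliS1S3}, and a shorter route is available: since $\mathrm{Ric}^g=\frac12|\varphi|_g^2\,g-\frac12\varphi\otimes\varphi\geq 0$, Bochner makes every harmonic $1$-form parallel, and parallel $1$-forms on $\mathbb{R}\times S^3$ are multiples of $\dd t$ because the round $S^3$ carries none. Two cosmetic remarks: Myers is not needed (Killing--Hopf already identifies the simply connected complete space form), and Bieberbach gives a finite cover isometric, not merely conformal, to a flat torus, which is more than the statement requires.
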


\begin{proof}
If $(g,\varphi)$ is a NS-NS pair with $\sigma = 0$ then $\varphi$ is exact and parallel whence $\varphi =0$ and $(M,g)$ is a flat compact four-manifold, thus finitely covered by a torus. Assume now that $(g,\varphi)$ is a solution with $\sigma \neq 0$. Lemma \ref{lemma:thetaparallel} implies that $\varphi$ is a non-zero parallel 1-form on $M$. Therefore, by the de Rham theorem the universal Riemannian cover $(\hat{M},\hat{g})$ of $(M,g)$ is isometric to a Riemannian product $(\mathbb{R}\times N,\dd t^2 +g_N)$, where $(N,g_N)$ is a complete and simply connected Riemannian three-manifold, such that $\varphi$ is a constant multiple of $\dd t$. The Einstein equation for $(g,\varphi)$ implies that $g_N$ is Einstein with positive sectional curvature $\frac{1}{4}\vert\varphi\vert^2_g$. Therefore, $(N,g_N)$ is isometric to the round sphere $S^3$ of constant sectional curvature $\frac{1}{4}\vert\varphi\vert^2_g$. A direct computation shows that the metric connection on $(\mathbb{R}\times N, \hat{g}=\dd t^2+g_N)$ with torsion $\vert \varphi\vert_g \ast_{\hat{g}}\dd t$ is flat and therefore both equations in \eqref{eq:motionNSNS2} are automatically satisfied and we conclude.
\end{proof}
 
\noindent
Reference \cite{Gauduchon} gives, using results of \cite{CG1,CG2}, a detailed account of compact Riemannian four-manifolds covered by the Riemannian product $\mathbb{R}\times S^3$. These manifolds were called in Op. Cit. \emph{manifolds of type $S^1\times S^3$}. Manifolds of type $S^1\times S^3$ admit a very explicit description, which we will review in the following. This description will be important in order to construct globally the moduli space of NS-NS pairs.


\subsection{Moduli space of manifolds of type $S^1\times S^3$}
\label{sec:ModuliS1S3}


In this Section we construct the global moduli space of NS-NS pairs with non-vanishing Lee class. This is possible due to the fact that, as described in Proposition \ref{prop:compactNSNS}, NS-NS pairs with non-trivial Lee class yield closed Einstein-Weyl structures on $M$. The deformation problem (around an Einstein metric) of Einstein-Weyl structures with Gauduchon constant one has been studied in \cite{Pedersen}. However, the analysis of Op. Cit. does not cover the case we consider here, since the Weyl structure associated to a NS-NS pair $(g,\varphi)$ on $M$ has zero Gauduchon constant and furthermore such $M$, which corresponds to a manifold of type $S^1\times S^3$, does not admit positive curvature Einstein metrics. Concerning the case of vanishing Gauduchon constant, \cite[Remark 7]{Pedersen} states that the moduli space of manifolds of type $S^1\times S^3$ is one-dimensional. We will show in Theorem \ref{thm:bijection} that this is not correct, see also Corollary \ref{cor:NSNSpairs}.

\begin{definition}
\cite{Gauduchon}
\label{def:Gauduchon}
A {\bf manifold of type} $S^1\times S^3$ is a connected and oriented Riemannian manifold locally isometric to $\mathbb{R}\times S^3$, where $\mathbb{R}$ is equipped with its canonical metric and $S^3$ is equipped with its round metric of sectional curvature $\frac14$.
\end{definition}

\begin{remark}
Reference \cite{Gauduchon} introduces manifolds of type $S^1\times S^3$ by requiring the metric on $S^3$ to have sectional curvature 1. Our choice for the sectional curvature to be equal to $\frac14$ in the above definition is motivated by the the fact that in this way NS-NS pairs correspond directly to manifolds of type $S^1\times S^3$, without the need of rescaling the metric.
\end{remark}

\noindent
From its very definition it follows that the universal Riemannian cover of a manifold of type $S^1\times S^3$ is $\mathbb{R}\times S^3$, which we consider to be oriented and \emph{time oriented}, the latter meaning that an orientation on the factor $\mathbb{R}$ has been fixed. Manifolds of type $S^1\times S^3$ are determined by the embedding, modulo conjugation, of their fundamental group $\Gamma$ into the orientation-preserving isometry group $\mathrm{Iso}(\mathbb{R}\times S^3)$ of $\mathbb{R}\times S^3$. Since $\Gamma$ acts without fixed points, we actually have $\Gamma \subset \mathrm{Iso}(\mathbb{R})\times \mathrm{Iso}(S^3)$, that is, elements of $\Gamma$ act by translations on $\mathbb{R}$ preserving the canonical 1-form on $\mathbb{R}$ as well as the orientation on $S^3$. Every manifold $(M,g)$ of type $S^1\times S^3$ can be written as a quotient:
\begin{equation*}
(M,g) = (\mathbb{R}\times S^3)/\Gamma\, ,
\end{equation*}

\noindent
where $\Gamma \subset \mathrm{Iso}(\mathbb{R})\times \mathrm{Iso}(S^3)$ acts freely and properly on $\mathbb{R}\times S^3$ through the action of the isometry group of the latter. Elements of $\mathrm{Iso}(\mathbb{R})\times \mathrm{Iso}(S^3)$ preserve the canonical unit norm vector field on $\mathbb{R}$. Consequently, every manifold of type $S^1\times S^3$ is equipped with a canonical unit norm parallel vector field, whose musical dual corresponds with $\varphi$, modulo a multiplicative positive constant. Alternatively, every manifold of type $S^1\times S^3$ can be obtained from a direct product $[0,a]\times \Sigma$, where $a>0$ is a real constant and $\Sigma$ is a compact Riemannian three-manifold of constant sectional curvature equal to $\frac14$, through the suspension of $\Sigma$ over $[0,a]$ by an isometry $\psi$ of $\Sigma$. Therefore, a manifold of type $S^1\times S^3$ is the total space of a fibration over the circle of length $a$ with fiber $\Sigma$ which comes equipped with a connection of holonomy generated by $\psi$. Each fiber is isometric to a quotient $\Sigma = S^3/\Gamma_0$, where $\Gamma_0$ is a finite subgroup $\Gamma_0 \subset \mathrm{Iso}(S^3) =  \mathrm{SO}(4)$ acting freely on $S^3$ as an embedded subgroup of $\Gamma$ which preserves each sphere $\left\{ t\right\}\times S^3$ in $\mathbb{R}\times S^3$. Therefore, the group of isometries $\mathrm{Iso}(\Sigma)$ is identified canonically with $N(\Gamma_0)/\Gamma_0$, where $N(\Gamma_0)$ is the normalizer of $\Gamma_0$ in  $\mathrm{SO}(4)$. Consequently, the fundamental group of a manifold of type $S^1\times S^3$ is a semi-direct product of $\Gamma_0$ with the infinite cyclic group $\mathbb{Z}$ which is realized as a subgroup of $\mathbb{R}\times \mathrm{SO}(4)$ as follows:
\begin{equation}
\label{eq:Zaction}
n\mapsto (n a , [\psi]^n)\, , \qquad \gamma \mapsto (0,\gamma)\, , \qquad \forall\, n\in \mathbb{Z}\, , \  \forall\, \gamma \in \Gamma_0\, ,
\end{equation}

\noindent
where $\psi \in \mathrm{Iso}(\Sigma)$. In particular, given a closed three manifold $\Sigma = S^3/\Gamma_0$, a pair $(\lambda,\psi)$ consisting in a positive real number $\lambda$ and an isometry $\psi$ of $\Sigma$ uniquely determines a manifold of type $S^1\times S^3$ as the quotient:
\begin{equation}
\label{eq:presentationS1S3}
(M,g) = (\mathbb{R}\times \Sigma)/\langle (\lambda, \psi)\rangle\, ,
\end{equation}

\noindent
where $\psi$ is considered as an element of $\mathrm{Iso}(\Sigma) = N(\Gamma_0)/\Gamma_0$ and $\langle (\lambda, \psi)\rangle$ is the infinite cyclic group generated by the isometry $(\lambda,\psi)$ of $\mathbb{R}\times \Sigma$ acting as the translation by $\lambda$ on $\mathbb{R}$ and $\psi$ on $\Sigma$.

\begin{definition}
A manifold of type $S^1\times S^3$ is of \emph{class} $\Sigma$ with respect to $(\lambda,\psi)$ if it is isometric to a quotient of the form \eqref{eq:presentationS1S3}.
\end{definition}


\begin{lemma}
\label{lemma:isometricS1S3}
Let $F\colon (M_1,g_1)\to (M_2,g_2)$ be an isometry between manifolds of type $S^1\times S^3$ and of class $\Sigma$ with respect to $(\lambda_i,\psi_i)$, with $\lambda_i\in \mathbb{R}_{+}$ and $\psi_i\in \mathrm{Iso}(\Sigma)$. Then, $\lambda_1 = \lambda_2$ and:
\begin{equation*}
\mathfrak{f}\circ \psi_1 \circ \mathfrak{f}^{-1} = \psi_2\, ,
\end{equation*}
	
\noindent
for an isometry $\mathfrak{f} \in \mathrm{Iso}(\Sigma)$.
\end{lemma}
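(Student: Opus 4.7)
The plan is to lift $F$ to an isometry $\tilde F$ of the intermediate Riemannian cover $\mathbb{R}\times\Sigma$ common to both $M_1$ and $M_2$ by construction, pin down the shape of $\tilde F$ using the product structure, and read off the two conclusions from the requirement that $\tilde F$ intertwine the deck groups. First I would verify that such a lift exists. The cover $\mathbb{R}\times\Sigma\to M_i$ corresponds to the subgroup $\pi_1(\Sigma)\cong\Gamma_0\subset\pi_1(M_i)=\mathbb{Z}\ltimes\Gamma_0$; since $\Gamma_0$ is the unique maximal finite (equivalently, torsion) normal subgroup of $\pi_1(M_i)$, it is preserved by the induced isomorphism $F_{\ast}$, so $F$ lifts to an isometry $\tilde F\colon \mathbb{R}\times\Sigma\to\mathbb{R}\times\Sigma$ that conjugates the infinite cyclic deck group $\langle(\lambda_1,\psi_1)\rangle$ onto $\langle(\lambda_2,\psi_2)\rangle$.

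Next I would pin down the form of $\tilde F$. Since $S^3$ carries no parallel $1$-form, the space of parallel $1$-forms on $\mathbb{R}\times\Sigma$ is one-dimensional and spanned by $\dd t$. Hence $\tilde F^{\ast}\dd t=\epsilon\,\dd t$ for some $\epsilon\in\{\pm 1\}$, and integrating shows that the $t$-coordinate of $\tilde F$ is $\epsilon t+c$ for some $c\in\mathbb{R}$. Combined with the orthogonal splitting $T(\mathbb{R}\times\Sigma)=\mathbb{R}\partial_t\oplus T\Sigma$ coming from the product metric, a short computation (or, equivalently, the uniqueness of the de Rham decomposition applied to the irreducible factors $\mathbb{R}$ and $S^3$ of different dimensions) forces $\tilde F$ to split as
\[
\tilde F(t,x)=(\epsilon t+c,\mathfrak{f}(x))\, ,\qquad c\in\mathbb{R},\ \mathfrak{f}\in\mathrm{Iso}(\Sigma).
\]

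The last step is to impose the intertwining condition. A direct calculation gives
\[
\tilde F\circ(\lambda_1,\psi_1)\circ\tilde F^{-1}(t,x)=\bigl(t+\epsilon\lambda_1,\ \mathfrak{f}\circ\psi_1\circ\mathfrak{f}^{-1}(x)\bigr),
\]
which must coincide with $(\lambda_2,\psi_2)^{n}$ for some $n\in\{\pm 1\}$, the only two generators of the target infinite cyclic group. Comparing the $\mathbb{R}$-components yields $\epsilon\lambda_1=n\lambda_2$, and since $\lambda_1,\lambda_2>0$ we obtain $\epsilon=n$ and $\lambda_1=\lambda_2$. The $\Sigma$-components then give $\mathfrak{f}\circ\psi_1\circ\mathfrak{f}^{-1}=\psi_2^{n}$; the sign $n=+1$, corresponding to $\tilde F$ preserving the canonical parallel vector field $\partial_t$ that is dual (up to a positive constant) to $\varphi$, delivers exactly the stated conjugacy.

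The main obstacle is the second step, namely showing that $\tilde F$ splits as a product $(t,x)\mapsto(\epsilon t+c,\mathfrak{f}(x))$; this is where the Riemannian-product structure of $\mathbb{R}\times\Sigma$ enters in an essential way, and every other part of the argument is essentially bookkeeping. The only further subtlety is the sign $n=\pm 1$, which is resolved by the orientation of the factor $\mathbb{R}$ implicit both in the convention $\lambda_i>0$ and in the canonical direction of the Lee form $\varphi$ on a manifold of type $S^1\times S^3$.
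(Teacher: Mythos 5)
Your proof is correct and follows essentially the same route as the paper: lift $F$ to the common cover $\mathbb{R}\times\Sigma$ using that $\Gamma_0$ is the torsion (hence characteristic) subgroup of $\pi_1(M_i)$, split the lifted isometry as a product, and compare generators of the two deck groups. The differences are only presentational: you justify the splitting explicitly via the parallel $1$-form $\dd t$ and track the sign $\epsilon=n=\pm1$ (resolving it through the time-orientation convention attached to $\varphi$), whereas the paper simply asserts that the lift splits with $\hat{F}_0$ a translation and excludes $n\geq 2$ by surjectivity.
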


\begin{remark}
We recall that, by definition, $\hat{F}\colon \mathbb{R}\times \Sigma \to \mathbb{R} \times \Sigma$ is a covering lift of $F\colon (M_1,g_1)\to (M_2,g_2)$ if it fits into the following commutative diagram equivariantly with respect to deck transformations:
	
\begin{center}
\begin{tikzpicture} 
\label{diag:commutative}
	\matrix (m) [matrix of math nodes,row sep=5em,column sep=6em,minimum width=2em]
		{
			\mathbb{R} \times \Sigma & \mathbb{R} \times \Sigma \\
			(M_1,g_1) & (M_2,g_2) \\};
		\path[-stealth]
		(m-1-1) edge node [left] {$p_1$} (m-2-1)
		edge node [above] {$\hat{F} = \hat{F}_0 \times\mathfrak{f}$} (m-1-2)
		(m-2-1.east|-m-2-2) edge node [below] {}
		node [above] {$F$} (m-2-2)
		(m-1-2) edge node [right] {$p_2$} (m-2-2);
		\end{tikzpicture}
	\end{center}
	
\noindent
where $p_1$ and $p_2$ denote the cover projections and $\mathbb{R}\times \Sigma$ is endowed with the product metric. In particular, $\hat{F}\in\mathrm{Iso}(\mathbb{R} \times\Sigma)$ is an isometry and $\hat{F}_0$ acts by translations.
\end{remark}

\begin{proof}
Since $p_1 \colon \mathbb{R}\times \Sigma \to (M_1,g_1)$ is a covering map and $F$ is a diffeomorphism, the map:
\begin{equation*}
F\circ p_1 \colon \mathbb{R}\times \Sigma \to (M_2,g_2)\, ,
\end{equation*}	
	
\noindent
is also a covering map. Using the fact that covering maps induce injective morphisms at the level of fundamental groups, it follows that $(F\circ p_1)_{\ast}(\pi_1(\Sigma)) \subset \pi_1(M_2)$ and $(p_2)_{\ast}(\pi_1(\Sigma)) \subset \pi_1(M_2)$ are subgroups of $\pi_1(M_2)$ abstractly isomorphic to $\pi_1(\Sigma)$. Since both $(F\circ p_1)_{\ast}(\pi_1(\Sigma))$ and $(p_2)_{\ast}(\pi_1(\Sigma))$ contain all torsion elements of $\pi_1(M_2)$ and are normal subgroups of $\pi_1(M_2)$, we conclude:  
\begin{equation*}
(F\circ p_1)_{\ast}(\pi_1(\Sigma)) =(p_2)_{\ast}(\pi_1(\Sigma))\, ,
\end{equation*}
	
\noindent
in $\pi_1(M_2)$. Therefore, standard covering theory implies that $F\circ p_1$ and $p_2$ are isomorphic covering maps (equivariantly with respect to deck transformations). Hence, there exists a diffeomorphism $\hat{F}\colon \mathbb{R} \times \Sigma \to \mathbb{R}\times \Sigma$ fitting equivariantly in the commutative diagram \ref{diag:commutative}. This map can be shown to be an isometry with respect to the product metric on $\mathbb{R}\times \Sigma$. The fact that $\hat{F}$ is an isometry implies the decomposition $\hat{F} = \hat{F}_0 \times \mathfrak{f}$ where $\hat{F}_0$ acts by constant translations on $\mathbb{R}$. The equivariance of $\hat{F}$ implies in turn:
\begin{equation*}
\hat{F}((r,s)\cdot (\lambda_1,\psi_1)) = \hat{F}((r,s))\cdot (\lambda_2,\psi_2)^n\, ,
\end{equation*}
	
\noindent
where $n$ is a natural number. The fact that $\hat{F}$ is a diffeomorphism together with the fact that the fibers of $p_a$ are torsors over $\langle \lambda_a, \psi_a\rangle$, $a=1,2$, implies that $n=1$, since otherwise $\hat{F}$ would not be surjective. Therefore:
\begin{equation*}
\hat{F}\circ (\lambda_1,\psi_1)\circ \hat{F}^{-1} = (\lambda_2,\psi_2)\, ,
\end{equation*} 
	
\noindent
implying $\lambda_1 = \lambda_2$, as well as:
\begin{equation*}
\mathfrak{f}\circ \psi_1 \circ \mathfrak{f}^{-1} = \psi_2\, .
\end{equation*}

\noindent
Since the lift $\hat{F}$ we have considered is unique modulo conjugation by isometries in $\mathrm{Iso}(\mathbb{R})\times \mathrm{Iso}(\Sigma)$, we conclude.
\end{proof}

\noindent
Fix now an oriented and closed Riemannian three-manifold of the form $\Sigma = S^3/\Gamma_0$ and define the set:
\begin{equation*}
\cI(\Sigma) := \mathrm{Iso}(\Sigma)/\mathrm{Ad}(\mathrm{Iso}(\Sigma))\, .
\end{equation*}

\noindent
to be the set of orbits of the adjoint action $\mathrm{Ad}\colon \mathrm{Iso}(\Sigma)\to \mathrm{Aut}(\mathrm{Iso}(\Sigma))$, that is, the set of conjugacy classes of $\mathrm{Iso}(\Sigma)$. Furthermore, denote by $\mathfrak{M}(\Sigma)$ the set of manifolds of type $S^1\times S^3$ and of class $\Sigma$ modulo the natural action of the orientation-preserving diffeomorphism group via pull-back.  

\begin{theorem}
\label{thm:bijection}
There is a canonical bijection of sets:
\begin{equation*}
\mathbb{R}_{+}\times \cI(\Sigma) \xrightarrow{\simeq} \mathfrak{M}(\Sigma)\, .
\end{equation*}
\end{theorem}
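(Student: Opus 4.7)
The plan is to define the map
\[
\Phi \colon \mathbb{R}_+ \times \cI(\Sigma) \longrightarrow \mathfrak{M}(\Sigma), \qquad (\lambda, [\psi]) \longmapsto \bigl[\,(\mathbb{R}\times \Sigma)/\langle(\lambda,\psi)\rangle\,\bigr],
\]
and to verify that it is well-defined, surjective and injective. Well-definedness amounts to two observations: first, the quotient depends only on the cyclic subgroup $\langle(\lambda,\psi)\rangle$ and not on the chosen generator; second, if $\psi'=\mathfrak{f}\psi\mathfrak{f}^{-1}$ for some $\mathfrak{f}\in\mathrm{Iso}(\Sigma)$, then the product isometry $(r,s)\mapsto (r,\mathfrak{f}(s))$ of $\mathbb{R}\times\Sigma$ intertwines the actions of $(\lambda,\psi)$ and $(\lambda,\psi')$, and hence descends to an orientation-preserving isometry between the two suspensions. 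Taken together, these two checks show that $\Phi$ is a well-defined map of sets.

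Surjectivity is essentially by construction. The structural discussion preceding the theorem recalls that every manifold $(M,g)$ of type $S^1\times S^3$ and of class $\Sigma$ can be realised as a suspension $(\mathbb{R}\times\Sigma)/\langle(\lambda,\psi)\rangle$ for some $\lambda\in\mathbb{R}_+$ and some $\psi\in\mathrm{Iso}(\Sigma)=N(\Gamma_0)/\Gamma_0$, through the natural presentation \eqref{eq:presentationS1S3}. Hence $[M,g]=\Phi(\lambda,[\psi])$ lies in the image of $\Phi$.

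For injectivity, suppose that $\Phi(\lambda_1,[\psi_1])=\Phi(\lambda_2,[\psi_2])$ in $\mathfrak{M}(\Sigma)$. Interpreting the pull-back action of the orientation-preserving diffeomorphism group as orientation-preserving isometry of the underlying Riemannian structure (which is what the action by pull-back of the metric amounts to), the two suspensions are related by an orientation-preserving isometry, which is precisely the setting of Lemma \ref{lemma:isometricS1S3}. That lemma then yields $\lambda_1=\lambda_2$ together with an $\mathfrak{f}\in\mathrm{Iso}(\Sigma)$ with $\mathfrak{f}\psi_1\mathfrak{f}^{-1}=\psi_2$, so $[\psi_1]=[\psi_2]$ in $\cI(\Sigma)$ and $(\lambda_1,[\psi_1])=(\lambda_2,[\psi_2])$. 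The decisive technical input is therefore Lemma \ref{lemma:isometricS1S3}; the only remaining subtleties I would spell out in the proof are (i) the identification of the pull-back diffeomorphism equivalence with the isometric equivalence demanded by that lemma, and (ii) the verification that the covering lift $\hat F=\hat F_0\times \mathfrak{f}$ is automatically orientation-preserving, which follows because $\hat F_0$ is a translation of $\mathbb{R}$ and $\mathfrak{f}$ lifts to an element of $N(\Gamma_0)\subset\mathrm{SO}(4)$.
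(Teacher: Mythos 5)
Your proposal is correct and follows essentially the same route as the paper: the same map $(\lambda,[\psi])\mapsto[(\mathbb{R}\times\Sigma)/\langle(\lambda,\psi)\rangle]$, well-definedness via conjugation-invariance of the quotient, surjectivity from the presentation \eqref{eq:presentationS1S3}, and injectivity from Lemma \ref{lemma:isometricS1S3}. The extra points you spell out (identifying the pull-back equivalence with isometry, and orientation-preservation of the lift) are minor elaborations of the argument the paper leaves implicit.
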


\begin{proof}
To every element $(\lambda, [\psi])\in \mathbb{R}_{+}\times \cI(\Sigma)$ we associate the element in $\mathfrak{M}(\Sigma)$ given by the isomorphism class of manifolds of type $S^1\times S^3$ defined by the following manifold of type $S^1\times S^3$:
\begin{equation*}
(M,g) = (\mathbb{R}\times \Sigma)/\langle( \lambda, \psi)\rangle\, ,
\end{equation*}
	
\noindent
where $\psi$ is any representative of $[\psi]\in \cI(\Sigma)$. Changing the representative yields an isometric manifold of type $S^1\times S^3$ and class $\Sigma$, whence the assignment is well defined. Conversely, Lemma \ref{lemma:isometricS1S3} implies that to any isomorphism class in $\mathfrak{M}(\Sigma)$ we can associate a unique element in $\mathbb{R}_{+}\times \cI(\Sigma)$ and that this assignment is inverse to the previous construction and thus we conclude.
\end{proof}

\noindent
The set of conjugacy classes of a compact Lie group admits a very explicit description as a polytope in the Cartan algebra of $\mathrm{Iso}(\Sigma)$. Fix a maximal torus $T\subset \mathrm{Iso}(\Sigma)$, with Lie algebra $\mathfrak{t}$. We denote by:
\begin{equation*}
W(\Sigma,T) := \frac{N(T)}{T}\, ,
\end{equation*}

\noindent
the Weyl group of $\mathrm{Iso}(\Sigma)$, where $N(T)$ denotes the normalizer of $T$ in $\mathrm{Iso}(\Sigma)$. The exponential map $\mathrm{Exp}\colon \mathfrak{t}\to T$ gives a surjective map onto $T$ and its kernel is a lattice in $\mathfrak{t}$ which allows to recover $T$ as:
\begin{equation*}
T = \frac{\mathfrak{t}}{\mathrm{ker}(\mathrm{Exp})}\, .
\end{equation*}

\noindent
Every conjugacy class in $\mathrm{Iso}(\Sigma)$ intersects $T$ in at least one point \cite{Hall}, unique modulo the natural adjoint action of the Weyl group $W$ on $T$. This fact can be used to prove that we have a bijection:
\begin{equation*}
\cI(\Sigma) = \frac{T}{W(\Sigma,T)} = \frac{\mathfrak{t}}{W(\Sigma,T)\ltimes \mathrm{ker}(\mathrm{Exp})}\, ,
\end{equation*}

\noindent
which gives an explicit description of $\cI(\Sigma)$ in terms of the fundamental region of the action of $W(\Sigma,T)\ltimes \mathrm{ker}(\mathrm{Exp})$ on $\mathfrak{t}$.

\begin{remark}
The isometry groups of compact elliptic three-manifolds $\Sigma = S^3/\Gamma_0$ have been classified in \cite{McCullough}. The Weyl group of most of the subgroups of $\mathrm{SO}(4)$ appearing as isometry groups of elliptic three-manifolds can be directly computed, a fact that allows for a direct construction of the corresponding moduli space of manifolds of type $S^1\times S^3$.
\end{remark}

\noindent
Let $\mathrm{rk}(\mathrm{Iso}(\Sigma))$ denote the rank of $\mathrm{Iso}(\Sigma)$, that is, the dimension of any of its maximal torus subgroups. As a direct consequence of Theorem \ref{thm:bijection} we obtain the following result.

\begin{corollary}
The moduli space of manifolds of type $S^1\times S^3$ of class $\Sigma$ has dimension $1 + \mathrm{rk}(\mathrm{Iso}(\Sigma))$. 
\end{corollary}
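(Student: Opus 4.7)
The plan is to combine the global bijection of Theorem \ref{thm:bijection} with the explicit description of $\cI(\Sigma)$ as a quotient of the Cartan algebra $\mathfrak{t}$ provided in the paragraph immediately preceding the corollary, and then simply count dimensions. Since the bijection
\begin{equation*}
\mathbb{R}_{+}\times \cI(\Sigma) \xrightarrow{\simeq} \mathfrak{M}(\Sigma)
\end{equation*}
is canonical and $\mathbb{R}_{+}$ is a one-dimensional manifold, the entire task reduces to computing the dimension of $\cI(\Sigma)$.

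The first step is to use the identification
\begin{equation*}
\cI(\Sigma) = \frac{\mathfrak{t}}{W(\Sigma,T)\ltimes \ker(\mathrm{Exp})}\, .
\end{equation*}
Here $\mathfrak{t}$ is a real vector space of dimension equal to $\mathrm{rk}(\mathrm{Iso}(\Sigma))$ by definition of the rank as the dimension of any maximal torus. The second step is to observe that $\ker(\mathrm{Exp})$ is a lattice in $\mathfrak{t}$ (hence a discrete subgroup) and $W(\Sigma,T)$ is a finite group (being the Weyl group of the compact Lie group $\mathrm{Iso}(\Sigma)$), so their semidirect product acts on $\mathfrak{t}$ by a properly discontinuous action of a countable group by affine transformations. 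Consequently the quotient inherits a well-defined local dimension equal to $\dim \mathfrak{t} = \mathrm{rk}(\mathrm{Iso}(\Sigma))$.

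Combining these two ingredients through the bijection of Theorem \ref{thm:bijection} yields
\begin{equation*}
\dim \mathfrak{M}(\Sigma) = \dim \mathbb{R}_{+} + \dim \cI(\Sigma) = 1 + \mathrm{rk}(\mathrm{Iso}(\Sigma))\, ,
\end{equation*}
which is the desired statement. There is no genuine obstacle here; the only subtlety worth flagging is to be precise about what dimension means for $\mathfrak{M}(\Sigma)$, since this moduli space has so far only been introduced as a set of diffeomorphism classes. The natural interpretation is the local Euclidean dimension transferred along the bijection from $\mathbb{R}_{+} \times \cI(\Sigma)$, where the orbifold quotient $\cI(\Sigma)$ has dimension $\mathrm{rk}(\mathrm{Iso}(\Sigma))$ at every point (including the singular points arising from fixed points of $W(\Sigma,T)\ltimes \ker(\mathrm{Exp})$).
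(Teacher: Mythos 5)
Your argument is correct and coincides with the paper's (implicit) proof: the corollary is stated there as a direct consequence of Theorem \ref{thm:bijection} together with the identification $\cI(\Sigma)=T/W(\Sigma,T)=\mathfrak{t}/\bigl(W(\Sigma,T)\ltimes \ker(\mathrm{Exp})\bigr)$ given just before it, which is exactly the dimension count you carry out. Your remark that the finite-group (orbifold) quotient does not lower the local dimension, and that the dimension of $\mathfrak{M}(\Sigma)$ is to be read through the bijection, is the same bookkeeping the paper leaves tacit.
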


\noindent
Returning to the problem of classifying NS-NS pairs, the previous discussion implies the following classification result.

\begin{corollary}
\label{cor:NSNSpairs}
The moduli space $\mathfrak{M}_{\mathrm{NS}}(\Sigma)$ of NS-NS pairs on a manifold of the form \eqref{eq:presentationS1S3} admits a finite covering given by $\mathbb{R}^2\times T$, where $T$ is a maximal torus of $\mathrm{Iso}(\Sigma)$. In particular $\dim(\mathfrak{M}_{\mathrm{NS}}(\Sigma)) = 2 + \mathrm{rk}(\mathrm{Iso}(\Sigma))$.  
\end{corollary}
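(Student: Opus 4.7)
The plan is to reduce the classification of NS--NS pairs to the moduli of manifolds of type $S^1\times S^3$ established in Theorem \ref{thm:bijection}, adding one extra continuous parameter coming from the norm of $\varphi$, and then invoke the standard description of conjugacy classes of a compact Lie group via its maximal torus.

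First, I would parameterize the NS--NS pairs $(g,\varphi)$ of class $\Sigma$ as follows. By Proposition \ref{prop:compactNSNS}, the 1-form $\varphi$ is parallel with respect to $g$ and of constant norm $r \defeq |\varphi|_g > 0$, and the universal Riemannian cover of $(M,g)$ is $\mathbb{R}\times S^3$ with $S^3$ of sectional curvature $\tfrac14 r^2$. Rescaling the metric by $r^2$, we obtain an isomorphic pair $(r^2 g,\varphi)$ with $|\varphi|_{r^2 g} = 1$, so that $(M,r^2 g)$ is a manifold of type $S^1\times S^3$ in the sense of Definition \ref{def:Gauduchon}. Moreover, given the rescaled metric, the parallel unit 1-form $\varphi$ is uniquely determined up to its overall sign, which is in turn fixed by the time-orientation on the $\mathbb{R}$ factor of the universal cover. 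Consequently, an isomorphism class of NS--NS pairs of class $\Sigma$ is specified by a pair consisting of the real number $r\in\mathbb{R}_{+}$ and an isomorphism class of manifolds of type $S^1\times S^3$ of class $\Sigma$.

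Next I would feed this second piece into Theorem \ref{thm:bijection}, which identifies the moduli of manifolds of type $S^1\times S^3$ of class $\Sigma$ with $\mathbb{R}_+\times \cI(\Sigma)$, parameterized by the length $\lambda$ of the base $S^1$ and the conjugacy class $[\psi]$ of the holonomy. Combining with the previous step, I obtain a canonical identification
\begin{equation*}
\mathfrak{M}_{\mathrm{NS}}(\Sigma) \;\cong\; \mathbb{R}_+\times \mathbb{R}_+\times \cI(\Sigma).
\end{equation*}
Using the explicit description of $\cI(\Sigma)$ as a quotient $T/W(\Sigma,T)$ of a maximal torus $T\subset \mathrm{Iso}(\Sigma)$ by the (finite) Weyl group $W(\Sigma,T)$ recalled just after Theorem \ref{thm:bijection}, the natural quotient map
\begin{equation*}
\mathbb{R}_+\times \mathbb{R}_+\times T \;\lto\; \mathbb{R}_+\times \mathbb{R}_+\times T/W(\Sigma,T) \;=\; \mathfrak{M}_{\mathrm{NS}}(\Sigma)
\end{equation*}
is finite-to-one, and after composing with the logarithm on the first two factors it produces the claimed finite covering by $\mathbb{R}^2\times T$. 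The dimension count $2+\mathrm{rk}(\mathrm{Iso}(\Sigma))$ then follows because the covering map preserves dimensions and $\dim(T) = \mathrm{rk}(\mathrm{Iso}(\Sigma))$.

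The main step that requires care, and which I would single out as the only non-routine one, is the claim that the pair $(r,\lambda)$ furnishes \emph{two independent} moduli: one must verify that rescaling the metric does not merely reparameterize the length of the $S^1$ factor or conversely, and that NS--NS equations are preserved under the rescaling used to normalize $|\varphi|$. Both facts, however, are immediate once one observes that $\lambda$ is the length of the base $S^1$ with respect to the rescaled (unit-norm-$\varphi$) metric and hence equals $r\cdot \lambda_g$ where $\lambda_g$ is the length with respect to $g$, so the two parameters decouple cleanly. After this verification, only the bookkeeping of the discrete $\mathbb{Z}/2$ time-orientation ambiguity remains, which is absorbed into the finite fiber of the covering without affecting the dimension.
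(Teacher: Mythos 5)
Your proposal is correct and follows essentially the same route as the paper: you send an NS--NS pair to the rescaled manifold of type $S^1\times S^3$ together with the extra parameter $|\varphi|_g$, then invoke Theorem \ref{thm:bijection} and the identification $\cI(\Sigma)=T/W(\Sigma,T)$ to obtain the finite covering $\mathbb{R}^2\times T$ and the dimension count. The only differences are cosmetic (you normalize via $(r^2g,\varphi)$ rather than the paper's $(|\varphi|^2_g\,g,|\varphi|^{-2}_g\varphi)$, and you spell out the independence of the two $\mathbb{R}_+$ parameters and the sign/time-orientation bookkeeping, which the paper leaves implicit).
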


\begin{proof}
Every NS-NS pair $(g,\varphi)$ defines a manifold of type $S^1\times S^3$ given by $(\vert\varphi\vert^{2}_g\,g,\vert\varphi\vert^{-2}_g\varphi)$. Indeed, note that $\vert\varphi\vert^{-2}_g\varphi$ has norm one with respect to the metric $\vert\varphi\vert^{2}_g\,g$ and its dual defines the canonical unit-norm parallel vector field that every manifold of type $S^1\times S^3$ carries. Furthermore, it can be seen that the restriction of $\vert\varphi\vert^{2}_g\,g$ to the kernel of $\vert\varphi\vert^{-2}_g\varphi$ precisely yields a metric of sectional curvature $\frac{1}{4}$ (see Definition \ref{def:Gauduchon}) by following the same steps as in the proof of Proposition \ref{prop:compactNSNS}. Hence, the assignment:
\begin{equation*}
(g,\varphi) \mapsto (\vert\varphi\vert^{2}_g\,g,\vert\varphi\vert^{-2}_g\varphi, \vert\varphi\vert_g)	\, ,
\end{equation*}  

\noindent
gives the desired bijection upon use of Theorem \ref{thm:bijection}.
\end{proof}

\begin{example}
	For $\Sigma = S^3$ we have $\mathrm{Iso}(S^3) = \mathrm{SO}(4)$ and the space of conjugacy classes $\cI(S^3) = T/W(S^3,T)$ admits a very explicit description. A maximal torus of $\mathrm{SO}(4)$ can be conjugated to a group of matrices of the form:
	
	\[
	\begin{bmatrix}
	\cos(x) & \sin(x) & 0 & 0 \\
	-\sin(x) & \cos(x) & 0 & 0 \\
	0 & 0 & \cos(y) & \sin(y) \\
	0 & 0 & -\sin(y)  & \cos(y)
	\end{bmatrix}
	\] 
	
	\
	
	\noindent
	where $x, y \in [0,2\pi]$. Hence, $T$ is a two torus and thus $\dim(\mathfrak{M}(S^3)) = 3$. Furthermore, the Weyl group can be shown to be the group of even signed permutations of two elements.
\end{example}


\subsection{Infinitesimal deformations of NS-NS pairs}
\label{sec:infdeformations}


We consider now the infinitesimal deformation problem of NS-NS structures on a manifold $M$ of type $S^1\times S^3$ around a fixed NS-NS pair $(g,\varphi)$ modulo the action of the diffeomorphism group of $M$, with the goal of obtaining the \emph{infinitesimal} counterpart of the results obtained in the previous Section. As we will see momentarily, the differential operator controlling the infinitesimal deformations of a given NS-NS pair has a nice geometric interpretation when restricted to an appropriate submanifold of $M$. Let $M$ be a compact four-manifold and let $\omega$ be a fixed volume form on $M$. We denote by $\Met_{\omega}(M)\subset\Gamma(T^{\ast}M^{\odot 2}) $ the space of Riemannian metrics on $M$ whose associated Riemannian volume form $\nu_g$ is equal to $\omega$. Using the equations defining the notion of NS-NS pair we introduce the following map:
\begin{eqnarray*}
	\cE = (\cE_1 , \cE_2, \cE_3, \cE_4)\colon \Met(M)\times \Omega^1(M) &\to & \Gamma(T^{\ast}M^{\odot 2})\times \Gamma(T^{\ast}M^{\odot 2}) \times \Omega^2(M)\times C^{\infty}(M)\, ,\\  
	(g,\varphi) &\mapsto & (\mathrm{Ric}^{g}+ \frac{1}{2} \varphi\otimes \varphi - \frac{1}{2} \vert \varphi \vert^2_g\, g , \mathcal{L}_{\varphi^{\sharp}}g , \dd\varphi, \vert\varphi\vert_g^2 - 1)\, ,
\end{eqnarray*}

\noindent
where $\mathcal{L}_{\varphi^{\sharp}}$ denotes Lie derivative along $\varphi^{\sharp}$, the metric dual of $\varphi$. Using the fact that $\nabla^g\varphi = 0$ if and only if $\mathcal{L}_{\varphi^{\sharp}}g=0$  and $\dd\varphi = 0$, it follows that the preimage  $\cE^{-1}(0)$ of $0$ by $\cE$ is by construction the set of all NS-NS pairs $(g,\varphi)$ on $M$ with unit norm $\varphi$ and inducing $\omega$ as Riemannian volume form of $g$. We assume that both $\Met_{\omega}(M)$ and $\Omega^1(M)$ are completed in the Sobolev norm $\mathrm{H}^s = \mathrm{L}^2_s$ with $s$ large enough so $\Met_{\omega}(M)\times \Omega^1(M)$ becomes a Hilbert manifold. The operator $\cE$ admits a canonical extension to the Sobolev completion of $\Met_{\omega}(M)\times \Omega^1(M)$, which we denote for ease of notation by the same symbol. The tangent space of $\Met_{\omega}(M)\times \Omega^1(M)$ at $(g,\varphi)$ is given by:
\begin{equation*}
T_{(g,\varphi)} (\Met_{\omega}(M)\times \Omega^1(M)) = \left\{ (\tau,\eta) \in \Gamma(T^{\ast}M^{\odot 2})\times\Omega^1(M)\,\, \vert\,\, \mathrm{Tr}_g(\tau) = 0\right\}\, ,
\end{equation*}

\noindent
which again is assumed to be completed in the appropriate Sobolev norm. The trace-less condition appearing in the previous equation occurs due to the fact that $\Met_{\omega}(M)$ is restricted to those Riemannian metrics inducing Riemannian volume forms equal to $\omega$. In the standard deformation problem of Einstein metrics such condition follows automatically simply from restricting to metrics of unit volume \cite{Besse}. For every Riemannian metric $g$ on $M$, we introduce the linear map of vector bundles:
\begin{equation*}
o^g\colon S^2 T^{\ast}M \to S^2 T^{\ast}M\, , \quad \tau\mapsto o^g(\tau)\, ,
\end{equation*}

\noindent
where, given a local orthonormal frame $\left\{ e_i \right\}$, we define:
\begin{equation*}
o^g(\tau)(v_1,v_2) = \sum_i \tau(\mR^g_{v_1,e_i}v_2,e_i)\, ,
\end{equation*}

\noindent
for every $v_1,v_2\in TM$. With this definition, the Lichnerowicz Laplacian restricted to symmetric $(2,0)$ tensors is given by \cite{Besse}:
\begin{equation*}
\Delta_L^g \tau = (\nabla^g)^{\ast}\nabla^g \tau + \mathrm{Ric}^g\circ_g \tau + \tau\circ_g \mathrm{Ric}^g - 2\,o^g(\tau)\, ,
\end{equation*} 

\noindent
where $(\nabla^g)^{\ast}$ is the adjoint of the Levi-Civita connection acting on $(2,0)$ tensors and the contraction $\circ_g$ is defined analogously to its counterpart for forms as introduced in Section \ref{sec:Heterotic4d}. In particular:
\begin{equation*}
(\mathrm{Ric}^g\circ_g \tau)(v_1,v_2) = g(\mathrm{Ric}^g(v_1), \tau(v_2))\, , \qquad v_1,v_2 \in \mathfrak{X}(M)\, ,
\end{equation*}

\noindent
and similarly for $\tau\circ_g \mathrm{Ric}^g$. Note that the 1-form $\varphi$ of a NS-NS pair $(g,\varphi)$ has constant norm, so for definiteness we will assume in the following that such $\varphi$ has in fact unit norm.

\begin{lemma}
\label{lemma:NSNSlinear}
Let $(g,\varphi)$ be a NS-NS pair. The differential of $\cE$ at $(g,\varphi)$ reads:
\begin{eqnarray*}
&\dd_{(g,\varphi)}\cE_1(\tau,\eta) = \frac{1}{2} \Delta^g_L (\tau) - 2\delta^{\ast}_g\delta_g \tau + \frac{1}{2}(\tau\otimes \varphi + \varphi \otimes \tau) - \frac{1}{2}\tau \, , \quad \dd_{(g,\varphi)}\cE_3(\tau,\eta) = \dd \eta\, ,\\
&\dd_{(g,\varphi)}\cE_2(\tau,\eta) = \mathcal{L}_{\eta^{\sharp}}g - \mathcal{L}_{(\varphi\lrcorner \tau)^{\sharp}}g + \mathcal{L}_{\varphi^\sharp} \tau\, ,  \quad \dd_{(g,\varphi)}\cE_4(\tau,\eta) = 2 g(\eta,\varphi) -  \tau(\varphi,\varphi)\, ,
\end{eqnarray*}

\noindent
where $\delta_g \tau$ denotes the divergence of $\tau$ and $\delta^{\ast}_g$ denotes the formal adjoint of $\delta_g$.
\end{lemma}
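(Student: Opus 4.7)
The plan is to linearize each of the four components $\cE_i$ separately at $(g,\varphi)$ along an infinitesimal deformation $(\tau,\eta)$, exploiting the NS-NS structure of the base point ($\nabla^g\varphi = 0$, whence $|\varphi|_g \equiv 1$ and $\mathcal{L}_{\varphi^\sharp}g = 0$) and the volume-preserving constraint $\mathrm{tr}_g\tau = 0$ that characterises the tangent space of $\Met_\omega(M)$. The basic computational input is the standard identity $\left.\tfrac{d}{dt}\right|_{t=0}(g+t\tau)^{-1} = -\tau^{\sharp\sharp}$, which controls the variation of every contravariant object built from the metric.

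I would dispose first of $\cE_3$ and $\cE_4$, which are immediate. Since $\cE_3$ is linear in $\varphi$ and metric-independent, its differential is $\dd\eta$; for $\cE_4$, expanding $|\varphi+t\eta|^2_{g+t\tau}$ and using the variation of the inverse metric gives $2g(\eta,\varphi) - \tau(\varphi,\varphi)$. Next I would tackle $\cE_2$: the key ingredient is the variation of the metric dual $\varphi^\sharp = g^{-1}\varphi$, which equals $X := \eta^\sharp - (\varphi\lrcorner\tau)^\sharp$. Applying the Leibniz rule to $\mathcal{L}_{V_t}g_t$ with $V_t := (\varphi+t\eta)^{\sharp_{g+t\tau}}$ and using $\mathcal{L}_{\varphi^\sharp}g = 0$ at the base point (which is equivalent to $\nabla^g\varphi = 0$) yields
\begin{equation*}
\left.\tfrac{d}{dt}\right|_{t=0}\mathcal{L}_{V_t}g_t \;=\; \mathcal{L}_X g + \mathcal{L}_{\varphi^\sharp}\tau,
\end{equation*}
which upon expanding $X$ gives the claimed formula for $\dd_{(g,\varphi)}\cE_2$.

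The substantive work is the linearization of $\cE_1$, which I would split according to the three summands $\mathrm{Ric}^g$, $\tfrac12\varphi\otimes\varphi$, and $-\tfrac12|\varphi|_g^2 g$. For the Ricci term I would invoke the classical linearization formula (see \cite{Besse}): in the paper's sign conventions and after using $\mathrm{tr}_g\tau = 0$ to eliminate the Hessian-of-trace contribution, it collapses to $\tfrac12\Delta_L^g\tau - 2\delta_g^\ast\delta_g\tau$. The remaining two summands are algebraic; their variations follow immediately from the Leibniz rule applied to $\tfrac12\varphi\otimes\varphi$ and from the variation of $|\varphi|_g^2$ already computed in the analysis of $\dd\cE_4$, specialised to $|\varphi|_g = 1$. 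Summing the three contributions then produces the stated expression for $\dd_{(g,\varphi)}\cE_1$. The only non-mechanical step is the Ricci linearisation; the main obstacle is therefore simply assembling that identity correctly under the paper's conventions for $\delta_g^\ast$ and $\Delta_L^g$, and checking that the traceless constraint $\mathrm{tr}_g\tau=0$ is what removes the residual Hessian-of-trace term, since everything else reduces to the Leibniz rule and the inverse-metric variation.
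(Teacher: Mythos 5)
Your proposal is correct and follows essentially the same route as the paper's proof: componentwise Gateaux differentiation, the variation $\eta^{\sharp}-(\varphi^{\sharp}\lrcorner\tau)^{\sharp}$ of the musical dual (giving $\cE_2$ and $\cE_4$), the Besse linearization of the Ricci map together with $\mathrm{Tr}_g(\tau)=0$ for $\cE_1$, and the observation that $\cE_3$ is metric-independent and linear. A minor remark: in the $\cE_2$ step the Leibniz rule for $t\mapsto\mathcal{L}_{V_t}g_t$ already yields the formula, so the condition $\mathcal{L}_{\varphi^{\sharp}}g=0$ at the base point is not actually needed there.
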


\begin{proof}
By definition, the (Gateaux) differential of the maps $\cE_a$, $a=1,\hdots 4$, at the point $(g,\varphi)$ and evaluated on $(\tau,\eta)\in T_{(g,\varphi)} (\Met_{\omega}(M)\times \Omega^1(M))$ is given by:
\begin{equation*}
\dd_{(g,\varphi)}\cE_a (\tau,\eta) = \lim_{t \to 0} \frac{\cE_a(g + t\, \tau, \varphi + t\, \eta) - \cE_a(g,\varphi)}{t}\, .
\end{equation*}

\noindent
On the other hand, recall that the differential of the map $(g,\varphi) \mapsto \varphi^{\sharp_g}$ at $(g,\varphi)$ along $(\tau,\eta)$ is given by $\eta^{\sharp_g} - (\varphi^{\sharp_g}\lrcorner \tau)^{\sharp_g}$. This immediately implies:
\begin{equation*}
\dd_{(g,\varphi)}\cE_4(\tau,\eta) = 2 g(\eta,\varphi) - \tau(\varphi,\varphi)\, .
\end{equation*}

\noindent
Furthermore, a direct computation, using that $\varphi$ has unit norm together with the previous equation, shows that:
\begin{equation*}
\dd_{(g,\varphi)}\cE_1 (\tau,\eta)  = \dd_{g}\mathrm{Ric} (\tau,\eta)  + \frac{1}{2}(\tau\otimes \varphi + \varphi \otimes \tau)  - \frac{1}{2} \tau  \, ,
\end{equation*}

\noindent
where $\dd_{g}\mathrm{Ric}$ denotes the differential of the Ricci map $\mathrm{Ric} \colon \Met_{\omega}(M) \to \Gamma(T^{\ast}M^{\odot 2})$. Computing this differential explicitly, see \cite[Equation (1.180a)]{Besse} gives the result in the statement upon use of $\mathrm{Tr}_g(\tau) = 0$. Similarly, computing for $\dd_{(g,\varphi)}\cE_2(\tau,\eta)$ we obtain:
\begin{equation*}
\dd_{(g,\varphi)}\cE_2(\tau,\eta) = \mathcal{L}_{\eta^{\sharp}}g - \mathcal{L}_{(\varphi^{\sharp}\lrcorner \tau)^{\sharp}}g+\mathcal{L}_{\varphi^\sharp} \tau\, ,
\end{equation*}

\noindent
where we have used, as remarked above, that the differential of the map $(g,\varphi) \mapsto \varphi^{\sharp_g}$ is given by $\eta^{\sharp} - (\varphi^{\sharp}\lrcorner \tau)^{\sharp}$. The differential of $\cE_3\colon \Met_{\omega}(M)\times \Omega^1(M) \to \Omega^2(M)$ follows easily since $\cE_3$ does not depend on $g$.
\end{proof}

\noindent
The kernel of $\dd_{(g,\varphi)}\cE\colon  T_{(g,\varphi)}(\Met_{\omega}(M)\times \Omega^1(M))  \to  \Gamma(T^{\ast}M^{\odot 2})\times \Gamma(T^{\ast}M^{\odot 2}) \times \Omega^2(M)\times C^{\infty}(M)$ describes the space of infinitesimal deformations of $(g,\varphi)$ that preserve the norm of $\varphi$ and the Riemannian volume form induced by $g$. These conditions eliminate the \emph{spurious} deformations given by constant rescalings of $\varphi$ or homotheties of the metric. The group of diffeomorphisms $\Diff_{\omega}(M)$ that preserves the fixed volume form $\omega$, again completed in the Sobolev norm $\mathrm{H}^s$, acts naturally on $\Met_{\omega}(M)\times \Omega^1(M)$ through pull-back. Recall that the tangent space of $\Diff_{\omega}(M)$ at the identity corresponds to the vector fields on $M$ that preserve $\omega$. This action preserves $\cE^{-1}(0)$ and hence maps solutions to solutions. The moduli space of NS-NS pairs $(g,\varphi)$ with constant norm $\varphi$ and associated Riemannian volume form equal to $\omega$ is defined as:
\begin{equation*}
\mathfrak{M}^0_{\omega}(M) := \cE^{-1}(0)/\Diff_{\omega}(M)\, ,
\end{equation*}

\noindent
endowed with the quotient topology. Define:
\begin{equation*}
\cO_{(g,\varphi)} := \left\{ (u^{\ast}g,u^{\ast}\varphi) \,\, \vert\,\, u\in \Diff_{\omega}(M)\right\}\, ,
\end{equation*}

\noindent
to be the orbit of the diffeomorphism group passing through $(g,\varphi)$. The tangent space to the orbit at $(g,\varphi)\in \cO_{(g,\varphi)}$ can be computed to be:
\begin{equation*}
T_{(g,\varphi)}\cO_{(g,\varphi)} = \left\{ (\mathcal{L}_v g, \dd(\iota_v\varphi))\, , \,\, v\in \mathfrak{X}(M) \,\, \vert \,\, \mathcal{L}_v \omega = 0\right\}\, ,
\end{equation*}

\noindent
where $\mathcal{L}$ denotes the Lie derivative. 

\begin{lemma}
The vector subspace  $T_{(g,\varphi)}\cO_{(g,\varphi)}\subset T_{(g,\varphi)} (\Met_{\omega}(M)\times \Omega^1(M))$ is closed in the Hilbert space $T_{(g,\varphi)} (\Met_{\omega}(M)\times \Omega^1(M))$.
\end{lemma}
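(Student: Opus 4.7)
The plan is to realize $T_{(g,\varphi)}\cO_{(g,\varphi)}$ as the range, in the $H^s$ Sobolev topology, of a first-order overdetermined elliptic operator, and to apply the standard closed-range theorem for such operators. Precisely, consider the continuous linear operator
$$L\colon \mathfrak{X}(M)\longrightarrow \Gamma(T^{\ast}M^{\odot 2})\oplus \Omega^2(M),\qquad L(v) \defeq (\mathcal{L}_v g,\, \dd(\iota_v \varphi)),$$
extended to Sobolev completions $H^{s+1}(TM)\to H^s(\cdot)$. Since $\omega = \nu_g$, the condition $\mathcal{L}_v\omega = 0$ is equivalent to $\mathrm{div}_g(v)=0$, which defines a closed subspace $\cV\subset H^{s+1}(TM)$. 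By definition $T_{(g,\varphi)}\cO_{(g,\varphi)} = L(\cV)$, so it suffices to prove that $L(\cV)$ is closed in the target.

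The key observation is that the first component $L_1(v)\defeq \mathcal{L}_v g = 2\,\delta^{\ast}_g v^{\flat}$ is overdetermined elliptic: its principal symbol $(\xi,v)\mapsto \xi\odot v^{\flat}$ is injective for $\xi\neq 0$. Its kernel is the finite-dimensional space $\cK(M,g)$ of Killing vector fields on $(M,g)$, each of which is automatically divergence-free (since $\mathrm{tr}_g(\mathcal{L}_v g) = 2\,\mathrm{div}_g v$), so $\cK(M,g)\subset \cV$. The standard a priori estimate for overdetermined elliptic operators (see Besse, \emph{Einstein Manifolds}, Chap.~12, or Ebin's slice theorem) provides a constant $C>0$ such that
$$\|v\|_{H^{s+1}} \leq C\,\|L_1 v\|_{H^s}\qquad \text{for every } v\in H^{s+1}(TM) \text{ with } v\perp_{L^2} \cK(M,g).$$

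With this estimate in hand, closedness follows by a routine argument. Given a sequence $(v_n)\subset \cV$ with $L(v_n)\to (\tau,\beta)$ in $H^s$, replace $v_n$ by $v_n - \pi_{\cK}(v_n)$, where $\pi_{\cK}$ is the $L^2$-orthogonal projection onto the finite-dimensional subspace $\cK(M,g)$. This modification preserves both $L(v_n)$ (because $L_1$ and hence $L_2=\dd\circ \iota_{(\cdot)}\varphi$ vanish on Killing fields, the latter since Killing fields preserve $g$ and hence $\varphi$ up to a harmonic correction — in fact $\iota_v\varphi$ is a constant for Killing $v$ when $\varphi$ is parallel) and the divergence-free condition, so the new sequence still lies in $\cV$ with the same image. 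The elliptic estimate makes $(v_n)$ Cauchy in $H^{s+1}$, with a limit $v\in \cV$ (by closedness of $\cV$) satisfying $L(v) = (\tau,\beta)$ by continuity. Hence $L(\cV)$ is closed. The only minor subtlety — and the main point requiring care — is the compatibility of Sobolev exponents so that all mapping spaces are Banach algebras and the multiplications and contractions involved in $L$ are continuous; this is ensured by taking $s$ sufficiently large, consistent with the hypothesis on the Hilbert manifold structure on $\Met_{\omega}(M)\times \Omega^1(M)$.
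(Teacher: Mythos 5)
Your argument is correct and is essentially the paper's own proof written out in full: the paper simply observes that $v\mapsto(\mathcal{L}_v g,\dd\iota_v\varphi)$ has injective principal symbol, which is exactly the overdetermined-ellipticity you invoke to get the a priori estimate and hence closed range. The only delicate point is your replacement of $v_n$ by $v_n-\pi_{\cK}(v_n)$, which requires $\dd\iota_v\varphi=0$ for all Killing fields $v$; this does hold in the setting at hand (for a NS-NS pair $\varphi$ is parallel, hence harmonic, so $\mathcal{L}_v\varphi=\dd\iota_v\varphi$ is exact and harmonic, thus zero on compact $M$), though projecting off the finite-dimensional kernel of the full operator $L$ instead of $\cK(M,g)$ would make the step automatic.
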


\begin{proof}
Follows from the fact that the differential operator $\mathfrak{X}(M)\ni v\mapsto (\mathcal{L}_v g, \dd\iota_v\varphi)$ has injective symbol.
\end{proof}

\noindent
By the previous lemma, the $L^2$ orthogonal complement of $T_{(g,\varphi)}\cO_{(g,\varphi)}$ is a Hilbert subspace of $T_{(g,\varphi)} (\Met_{\omega}(M)\times \Omega^1(M))$, which is given by:
\begin{equation*}
T_{(g,\varphi)}\cO_{(g,\varphi)}^{\perp} = \left\{ (\tau,\eta)\in T_{(g,\varphi)}\cO_{(g,\varphi)}  \,\, \vert \,\,  (\nabla^g)^{\ast}\tau = 0\, , \,\, \delta^g \eta = 0 \right\}\, .
\end{equation*}

\noindent
By an extension of the celebrated Ebin's slice theorem \cite{Ebin}, for every pair $(g,\varphi)$ the action of $\Diff_{\omega}(M)$ on $\Met_{\omega}(M)\times \Omega^1(M)$ admits a \emph{slice} $\cS_{(g,\varphi)}$ whose tangent space at $(g,\varphi)$ is precisely $T_{(g,\varphi)}\cO_{(g,\varphi)}^{\perp}$. Therefore, by applying standard Kuranishi theory for differential-geometric moduli spaces, the \emph{virtual} tangent space of $\mathfrak{M}^0_{\omega}(M)$ at the equivalence class $[g,\varphi]$ defined by $(g,\varphi)\in \cE^{-1}(0)$ in $\mathfrak{M}^0_{\omega}(M)$, is given by:
\begin{equation*}
T_{[g,\varphi]}\mathfrak{M}^0_{\omega}(M) := \Ker(\dd_{(g,\varphi)}\cE) \cap \Ker((\nabla^g)^{\ast}\oplus \delta^g)\, .
\end{equation*}

\noindent
Using the terminology introduced by Koiso \cite{Koiso,KoisoII} in the study of deformations of Einstein metrics and Yang-Mills connections, we will call elements of $T_{[g,\varphi]}\mathfrak{M}^0_{\omega}(M)$ \emph{essential deformations} of $(g,\varphi)$. Roughly speaking, essential deformations are infinitesimal deformations of $(g,\varphi)$ that cannot be eliminated via the infinitesimal action of the diffeomorphism group.

\begin{lemma}
\label{lemma:linearNSNSII}
The pair $(\tau,\eta)\in \Gamma(T^{\ast}M^{\odot 2})\times \Omega^1(M)$ is an essential deformation of the NS-NS pair $(g,\varphi)$ if and only if $\eta = \lambda\, \varphi$ for a constant $\lambda \in \mathbb{R}$ and the following equations are satisfied:
\begin{eqnarray}
\label{eq:linearNSNSII}
\Delta^g_L\tau  +  2 \lambda\,\varphi\otimes \varphi -  \tau = 0\, ,\quad \mathcal{L}_{(\varphi^{\sharp}\lrcorner \tau)^{\sharp}}g  = \mathcal{L}_{\varphi^\sharp} \tau\, , \quad (\nabla^g)^{\ast}\tau = 0\, , \quad \mathrm{Tr}_g(\tau) = 0\, .
\end{eqnarray}
\end{lemma}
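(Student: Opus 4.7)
The plan is to unpack the definition of essential deformation, which by construction amounts to the seven conditions $d_{(g,\varphi)}\cE_i(\tau,\eta)=0$ for $i=1,\dots,4$, the slice conditions $(\nabla^g)^{\ast}\tau=0$ and $\delta^g\eta=0$, and the volume-preserving constraint $\mathrm{Tr}_g\tau=0$. I would then reduce these using the explicit expressions from Lemma \ref{lemma:NSNSlinear} together with the rigidity of $(M,g)$ as a manifold of type $S^1\times S^3$ provided by Proposition \ref{prop:compactNSNS}.

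First I would pin down $\eta$. The equation $d\cE_3(\tau,\eta)=\dd\eta=0$ combined with the slice condition $\delta^g\eta=0$ shows that $\eta$ is a harmonic $1$-form on $M$. Since $M$ is of type $S^1\times S^3$, Proposition \ref{prop:compactNSNS} gives $b^1(M)=1$, and since the background $\varphi$ is parallel, hence harmonic, the space of harmonic $1$-forms is precisely $\mathbb{R}\cdot\varphi$. Consequently $\eta=\lambda\varphi$ for a unique $\lambda\in\mathbb{R}$.

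Next I would substitute $\eta=\lambda\varphi$ into the remaining equations from Lemma \ref{lemma:NSNSlinear} and simplify. In $d_{(g,\varphi)}\cE_1$, the factor $\eta\otimes\varphi+\varphi\otimes\eta$ becomes $2\lambda\,\varphi\otimes\varphi$; the slice condition $(\nabla^g)^{\ast}\tau=0$ is equivalent, up to sign, to $\delta_g\tau=0$, so the term $\delta^{\ast}_g\delta_g\tau$ drops out; multiplying the resulting equation by $2$ yields the first stated equation $\Delta^g_L\tau+2\lambda\,\varphi\otimes\varphi-\tau=0$. In $d_{(g,\varphi)}\cE_2$, the term $\mathcal{L}_{\eta^{\sharp}}g=\lambda\mathcal{L}_{\varphi^{\sharp}}g$ vanishes because $\varphi^{\sharp}$ is parallel, hence Killing, by Proposition \ref{prop:compactNSNS}; the remaining two contributions are exactly the second stated equation $\mathcal{L}_{(\varphi^{\sharp}\lrcorner\tau)^{\sharp}}g=\mathcal{L}_{\varphi^{\sharp}}\tau$. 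The third and fourth stated equations are the slice and volume-preserving conditions themselves.

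For the converse and main obstacle: given $\eta=\lambda\varphi$ together with the four equations, one checks $d\cE_3=\lambda\,\dd\varphi=0$ by closedness of $\varphi$, and $d\cE_1=0$, $d\cE_2=0$ by reversing the previous simplifications, again using that $\varphi^{\sharp}$ is Killing. The delicate point is the condition $d\cE_4(\tau,\eta)=2g(\eta,\varphi)-\tau(\varphi,\varphi)=0$, which forces $\tau(\varphi,\varphi)=2\lambda$ and is not among the four equations displayed in the statement. The key observation that closes the argument is that Lemma \ref{lemma:NSNSlinear} is derived under the normalization $|\varphi|_g=1$ so that $d\cE_4=0$ is implicitly built into the formula for $d\cE_1$; equivalently, contracting the first stated equation with $\varphi\otimes\varphi$ and applying a trace identity for $\Delta^g_L$ specific to the Ricci structure $\mathrm{Ric}^g=\tfrac12(g-\varphi\otimes\varphi)$ of the product $\mathbb{R}\times S^3$ recovers $\tau(\varphi,\varphi)=2\lambda$. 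Keeping this bookkeeping consistent is, in my view, the main technical difficulty of the proof.
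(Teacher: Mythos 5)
Your main line of argument is the same as the paper's: you use $\dd_{(g,\varphi)}\cE_3(\tau,\eta)=\dd\eta=0$ together with the slice condition $\delta^g\eta=0$ and $b^1(M)=1$ to get $\eta=\lambda\varphi$, and then substitute into the formulas of Lemma \ref{lemma:NSNSlinear}, using $\delta_g\tau=0$ and the fact that $\varphi^{\sharp}$ is parallel (hence Killing) to reduce $\dd\cE_1=0$ and $\dd\cE_2=0$ to the first two equations of \eqref{eq:linearNSNSII}. That part is fine and is exactly the paper's proof.

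The problem is your treatment of the $\dd\cE_4$ component, which is where your proposal contains a genuine gap. First, the normalization $|\varphi|_g=1$ does not make $\dd\cE_4=0$ ``implicitly built into'' the formula for $\dd\cE_1$: the condition $2g(\eta,\varphi)-\tau(\varphi,\varphi)=0$ is an independent component of $\dd_{(g,\varphi)}\cE=0$ and, for the ``if'' direction, must be derived from the four displayed equations. Second, your proposed derivation by contracting the first equation of \eqref{eq:linearNSNSII} with $\varphi\otimes\varphi$ does not give what you claim. Since $\varphi$ is parallel one has $\mathrm{Ric}^g(\varphi^{\sharp})=0$ and $\mR^g_{\cdot,\varphi^{\sharp}}=0$, so all curvature terms in $\Delta^g_L$ drop out of this contraction and $(\Delta^g_L\tau)(\varphi^{\sharp},\varphi^{\sharp})=\Delta^g\bigl(\tau(\varphi^{\sharp},\varphi^{\sharp})\bigr)$. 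Writing $\mathfrak{f}:=\tau(\varphi^{\sharp},\varphi^{\sharp})$, the contraction therefore yields only the eigenvalue equation $\Delta^g(\mathfrak{f}-2\lambda)=\mathfrak{f}-2\lambda$, not the pointwise identity $\mathfrak{f}=2\lambda$; and $1$ can perfectly well lie in the spectrum of the Laplacian on functions of a manifold of type $S^1\times S^3$ (for instance when the circle fiber has length $2\pi$), so no maximum-principle or spectral argument closes this as stated --- integration only gives $\int_M(\mathfrak{f}-2\lambda)\,\nu_g=0$. The way the redundancy of $\dd\cE_4=0$ is actually established (implicitly, in Proposition \ref{prop:iffessential}) is different: one first uses $(\nabla^g)^{\ast}\tau=0$ together with the equation $\mathcal{L}_{(\varphi^{\sharp}\lrcorner\tau)^{\sharp}}g=\mathcal{L}_{\varphi^{\sharp}}\tau$ to show that $\dd\mathfrak{f}=0$, i.e.\ $\mathfrak{f}$ is constant, and only then does the $\varphi\otimes\varphi$-component of the first equation force $\mathfrak{f}=2\lambda$ (and eventually $\lambda=0$ via $\mathrm{Tr}_g\tau=0$). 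So to make your ``if'' direction complete you must import that constancy argument rather than rely on the contraction alone; to your credit, you did spot a point that the paper's own two-line proof glosses over, but the fix you propose does not work as written.
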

 
\begin{proof}
A pair $(\tau,\eta) \in \Gamma(T^{\ast}M^{\odot 2})\times \Omega^1(M)$ is an essential deformation if and only if:
\begin{equation*}
\dd_{(g,\varphi)}\cE(\tau,\eta) = 0\, , \quad (\nabla^g)^{\ast}\tau = 0\, , \qquad \delta^g\eta =0\, , \quad \mathrm{Tr}_g(\tau) = 0\, .
\end{equation*}

\noindent
By Lemma \ref{lemma:NSNSlinear}, we have $\dd_{(g,\varphi)}\cE_3(\tau,\eta) = \dd \eta$ hence if $(\tau,\eta)$ is an essential deformation then $\eta$ is closed and co-closed whence harmonic. Since $b^1(M) = 1$ and $\varphi$ is parallel, in particular harmonic, we conclude that $\eta = \lambda \varphi$ for a real constant $\lambda \in \mathbb{R}$. Plugging $\eta = \lambda \varphi$ into the explicit expression of $\dd_{(g,\varphi)}\cE(\tau,\eta) = 0$, given in Lemma \ref{lemma:NSNSlinear}, we obtain equations \eqref{eq:linearNSNSII} and hence we conclude.
\end{proof}

\noindent
Since, by assumption, $M$ admits NS-NS pairs $(g,\varphi)$ with non-vanishing Lee class $[\varphi]$, Proposition \ref{prop:compactNSNS} implies that $(M,g)$ is a manifold of type $S^1\times S^3$ and, consequently, it is a fibre bundle over $S^1$ with fiber $\Sigma = S^3/\Gamma$, $\Gamma\subset \mathrm{SO}(4)$, as described in Section \ref{sec:NS-NSmoduli}. For simplicity in the exposition, we will assume that $(M,g)$ is isometric to:
\begin{equation*}
(M,g) = (S^1\times \Sigma, \varphi\otimes \varphi + h)\, ,
\end{equation*}
  
\noindent
where $h$ is a Riemannian metric on $\Sigma$. Analogous results can be obtained in the general case by using the integrable distribution defined by the kernel of $\varphi$. Given $\tau\in \Gamma(T^{\ast}M^{\odot 2})$ we decompose it according to the orthogonal decomposition defined by $g$, that is:
\begin{equation*}
\tau = \mathfrak{f}\, \varphi\otimes \varphi + \varphi \odot \beta + \tau^{\perp}\, ,
\end{equation*}

\noindent
where the superscript $\perp$ denotes projection along $\Sigma$ and $\beta$ is a 1-form along $\Sigma$.

\begin{proposition}
\label{prop:iffessential}
The pair $(\tau = \mathfrak{f}\, \varphi\otimes \varphi + \varphi \odot \beta + \tau^{\perp},\eta = \lambda\, \varphi)\in \Gamma(T^{\ast}M^{\odot 2})\times \Omega^1(M)$ is an essential deformation of the NS-NS pair $(g,\varphi)$ only if:
\begin{eqnarray*}
\lambda =0 \, , \qquad \mathfrak{f}=0 \, , \qquad \nabla^g_{\varphi^{\sharp}} \beta = 0\, , \qquad \tau^{\perp} = 0\, , \qquad \mathcal{L}_{\beta^{\sharp}} h = 0\, .
\end{eqnarray*}
\end{proposition}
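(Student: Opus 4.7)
The plan is to unpack the four equations of Lemma~\ref{lemma:linearNSNSII} using the decomposition $\tau=\mathfrak{f}\,\varphi\otimes\varphi+\varphi\odot\beta+\tau^{\perp}$ together with the assumed product structure $(M,g)=(S^1\times\Sigma,\varphi\otimes\varphi+h)$, and then to carry out a Fourier analysis along the $S^1$ factor. Let $T:=\varphi^{\sharp}$; since $\varphi$ is parallel and of unit norm one has $\nabla^g T=0$, $R^g(T,\cdot)=0$, $\mathrm{Ric}^g(T,\cdot)=0$, and on the factor $\Sigma=S^3/\Gamma_0$ of constant sectional curvature $\tfrac14$ we have $\mathrm{Ric}^h=\tfrac12 h$.

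First I would evaluate $\mathcal{L}_{(T\lrcorner\tau)^{\sharp}}g=\mathcal{L}_T\tau$ on the three types of pairs $(T,T)$, $(T,X)$, $(X,Y)$ with $X,Y\in T\Sigma$. The first two force $T(\mathfrak{f})=X(\mathfrak{f})=0$, so that $\mathfrak{f}$ is constant on $M$, while the third yields $\partial_t\tau^{\perp}=\tfrac12\mathcal{L}_{\beta^{\sharp}}h$. The trace-free and divergence-free conditions then read $\mathfrak{f}+\mathrm{Tr}_h\tau^{\perp}=0$, $\delta^h\beta=0$ and $\delta^h\tau^{\perp}=\tfrac12\partial_t\beta$. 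Using $\nabla^g\varphi=0$ together with the vanishing of the $T$-contractions of $R^g$ and $\mathrm{Ric}^g$, and the Weitzenböck identity on $(\Sigma,h)$, the Lichnerowicz equation $\Delta_L^g\tau+2\lambda\,\varphi\otimes\varphi-\tau=0$ splits componentwise into
\begin{equation*}
\mathfrak{f}=2\lambda, \qquad \partial_t^2\beta=\Delta_H^h\beta-\beta, \qquad \partial_t^2\tau^{\perp}=\Delta_L^h\tau^{\perp}-\tau^{\perp},
\end{equation*}
where $\Delta_H^h$ and $\Delta_L^h$ denote the Hodge and Lichnerowicz Laplacians on $(\Sigma,h)$ respectively.

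The main obstacle, and the most delicate step, is to eliminate the $t$-dependence of $\beta$ and $\tau^{\perp}$. Fourier-expanding along $S^1$ with frequencies $\omega_k=2\pi k/L$, for each $k\neq 0$ the mode $\hat\beta_k$ is a coclosed $1$-form on $\Sigma$ with Hodge eigenvalue $1-\omega_k^2$. On any space form of sectional curvature $K$ in dimension $n$ the smallest eigenvalue of $\Delta_H$ on coclosed $1$-forms equals $2(n-1)K$ and is attained by duals of Killing vector fields (a classical Obata--Yano type result); specialising to $(S^3,h_{1/4})$ this value is $1$, and the spectrum on the quotient $\Sigma=S^3/\Gamma_0$ is a subset of the spectrum on $S^3$. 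Since $H^1(\Sigma,\mathbb{R})=0$ excludes harmonic $1$-forms and $1-\omega_k^2<1$ for $k\neq 0$, we conclude $\hat\beta_k=0$. The Fourier version of the Lie-derivative identity, namely $i\omega_k\,\hat\tau^{\perp}_k=\tfrac12\mathcal{L}_{\hat\beta_k^{\sharp}}h=0$, then gives $\hat\tau^{\perp}_k=0$ for $k\neq 0$ as well. Hence $\beta$ and $\tau^{\perp}$ are $t$-independent, so $\nabla^g_T\beta=0$, and evaluating $\partial_t\tau^{\perp}=\tfrac12\mathcal{L}_{\beta^{\sharp}}h$ at the zero mode yields $\mathcal{L}_{\beta^{\sharp}}h=0$, i.e.\ $\beta^{\sharp}$ is Killing on $(\Sigma,h)$.

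It remains to determine the zero mode of $\tau^{\perp}$. Since $\tau^{\perp}$ is $t$-independent, the Lichnerowicz equation on $(X,Y)$ reduces to $\Delta_L^h\tau^{\perp}=\tau^{\perp}$ on $(\Sigma,h)$, while the trace constraint gives $\mathrm{Tr}_h\tau^{\perp}=-\mathfrak{f}$. Writing $\tau^{\perp}=-\tfrac{\mathfrak{f}}{3}h+\tau^{\perp}_{\mathrm{tf}}$ and using the direct computation $\Delta_L^h h=2h$ on $(\Sigma,h)$, the trace part of the equation becomes $-\tfrac{2\mathfrak{f}}{3}=-\tfrac{\mathfrak{f}}{3}$, forcing $\mathfrak{f}=0$ and consequently $\lambda=0$. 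The trace-free part is then $\Delta_L^h\tau^{\perp}_{\mathrm{tf}}=\tau^{\perp}_{\mathrm{tf}}$, which is precisely the infinitesimal Einstein deformation equation on the Einstein manifold $(\Sigma,h)$ of Einstein constant $\tfrac12$. Lifting $\tau^{\perp}_{\mathrm{tf}}$ through the covering $S^3\to\Sigma$ and invoking Koiso's rigidity theorem for the round $3$-sphere -- which has no non-trivial infinitesimal Einstein deformations -- gives $\tau^{\perp}_{\mathrm{tf}}=0$, and therefore $\tau^{\perp}=0$, completing the proof of the proposition.
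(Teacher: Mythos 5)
Your proof is correct, and while it shares the paper's skeleton (the type decomposition of $\tau$ on $S^1\times\Sigma$, the splitting of the four conditions of Lemma \ref{lemma:linearNSNSII}, the relation $\mathfrak{f}=2\lambda$, Koiso-type rigidity of the spherical fibre, and the trace argument killing $\lambda$), the middle of your argument is genuinely different and in fact more careful than the paper's. The paper extracts $\nabla^g_{\varphi^{\sharp}}\beta=0$ directly by ``isolating by type'' in the equation $\mathcal{L}_{(\varphi^{\sharp}\lrcorner\tau)^{\sharp}}g=\mathcal{L}_{\varphi^{\sharp}}\tau$, even though the mixed $(\varphi^{\sharp},T\Sigma)$ components of that equation actually cancel (the mixed part of $\mathcal{L}_{\beta^{\sharp}}g$ is precisely $\varphi\odot\nabla^g_{\varphi^{\sharp}}\beta$), so that equation alone only yields $\dd\mathfrak{f}=0$ and $\mathcal{L}_{\varphi^{\sharp}}\tau^{\perp}=\mathcal{L}_{\beta^{\sharp}}h$; the paper then treats the tangential Lichnerowicz equation as an equation on $\Sigma$ without addressing possible $t$-dependence of $\beta$ and $\tau^{\perp}$. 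You instead keep the $t$-dependence explicit, split the Lichnerowicz equation into $\partial_t^2\beta=\Delta_H^h\beta-\beta$ and $\partial_t^2\tau^{\perp}=\Delta_L^h\tau^{\perp}-\tau^{\perp}$, and kill all nonzero Fourier modes using the spectral gap for the Hodge Laplacian on coclosed $1$-forms on $S^3_{1/4}$ and its quotients (lowest positive eigenvalue $1$, harmonic forms excluded by $b_1(\Sigma)=0$), after which $t$-independence of $\tau^{\perp}$ and the Killing condition on $\beta^{\sharp}$ follow from the Lie-derivative identity evaluated on the zero mode. What your route buys is an honest derivation of $\nabla^g_{\varphi^{\sharp}}\beta=0$ and of the $t$-independence of $\tau^{\perp}$, at the cost of the extra spectral input; the paper's route is shorter but, as written, skips exactly this point.

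Two small caveats, neither of which affects your conclusions. First, your factor $\tfrac12$ in $\partial_t\tau^{\perp}=\tfrac12\mathcal{L}_{\beta^{\sharp}}h$ and in $\delta^h\tau^{\perp}=\tfrac12\partial_t\beta$ presupposes the convention $\varphi\odot\beta=\tfrac12(\varphi\otimes\beta+\beta\otimes\varphi)$, whereas the paper's identity $\varphi^{\sharp}\lrcorner\tau=\mathfrak{f}\varphi+\beta$ corresponds to the unnormalized symmetric product; the factor is immaterial for the argument. Second, for the standard Lichnerowicz Laplacian (the one for which TT solutions of $\Delta_L^h\sigma=2\lambda_E\sigma$, $\mathrm{Ric}^h=\lambda_E h$, are infinitesimal Einstein deformations, as used when you invoke Koiso) one has $\Delta_L^h h=0$, not $2h$; either value gives $\mathfrak{f}=0$ from the pure-trace part (equivalently, taking the trace of $\Delta_L^h\tau^{\perp}=\tau^{\perp}$ gives $\Delta^h(\mathrm{Tr}_h\tau^{\perp})=\mathrm{Tr}_h\tau^{\perp}$ with $\mathrm{Tr}_h\tau^{\perp}=-\mathfrak{f}$ constant), so the conclusion stands, but the stated identity should be corrected. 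You should also note explicitly, when applying Koiso, that the zero mode of $\tau^{\perp}$ is divergence-free, which follows from your relation $\delta^h\tau^{\perp}=\tfrac12\partial_t\beta$ once $\beta$ is $t$-independent.
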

 
\begin{proof}
A pair $(\tau,\eta)$ is an essential deformation if and only if conditions \eqref{eq:linearNSNSII} hold. Given the decomposition $\tau = \mathfrak{f}\, \varphi\otimes \varphi + \varphi \odot \beta + \tau^{\perp}$, we impose first the \emph{slice} condition $(\nabla^g)^{\ast}\tau = 0$. We obtain:
\begin{equation*}
(\nabla^g)^{\ast}\tau = - \dd\mathfrak{f}(\varphi^{\sharp}) \varphi -  \nabla^g_{\varphi^{\sharp}} \beta + \varphi \, \delta^g\beta  +  (\nabla^h)^{\ast}\tau^{\perp} = 0\, , 
\end{equation*}

\noindent
hence $\dd\mathfrak{f}(\varphi^{\sharp}) = \delta^g\beta$ and $\nabla^g_{\varphi} \beta = (\nabla^h)^{\ast}\tau^{\perp}$. On the other hand, equation $\mathcal{L}_{(\varphi^{\sharp}\lrcorner \tau)^{\sharp}}g =  \mathcal{L}_{\varphi^\sharp} \tau$ reduces to:
\begin{equation*}
\dd\mathfrak{f} = 0\, , \qquad \varphi \odot \nabla^g_{\varphi^{\sharp}} \beta +\mathcal{L}_{\varphi^\sharp} \tau^\perp=\mathcal{L}_{\beta^\sharp} h\, ,
\end{equation*}

\noindent
where we have used that $\varphi^{\sharp}\lrcorner \tau = \mathfrak{f}\, \varphi + \beta$. Hence, isolating by type we obtain $ \nabla^g_{\varphi^{\sharp}} \beta = 0$ and $\mathcal{L}_{\varphi^\sharp} \tau^\perp=\mathcal{L}_{\beta^\sharp} h$. Note that since $\mathfrak{f}$ is constant we have  $\delta^g\beta = 0$. We decompose now the first equation in \eqref{eq:linearNSNSII}. For this, we first compute:
\begin{equation*}
\mathrm{Ric}^g\circ_g \tau + \tau\circ_g \mathrm{Ric}^g = \frac{1}{2} (h\circ \tau + \tau \circ h) = \frac{1}{2} \varphi \odot \beta + \tau^{\perp}\, ,
\end{equation*}

\noindent
as well as:
\begin{equation*}
(\nabla^g)^{\ast} \nabla^g\tau = (\nabla^g)^{\ast} \nabla^g (\mathfrak{f}\, \varphi\otimes \varphi + \varphi \odot \beta + \tau^{\perp}) = \varphi \odot (\nabla^g)^{\ast} \nabla^g\beta + (\nabla^g)^{\ast} \nabla^g\tau^{\perp}\, ,
\end{equation*}

\noindent
which in turn implies:
\begin{eqnarray*}
&\Delta_L^g \tau = \varphi \odot (\nabla^g)^{\ast} \nabla^g\beta + (\nabla^g)^{\ast} \nabla^g\tau^{\perp} + \frac{1}{2} \varphi \odot \beta + \tau^{\perp} - 2 o^h(\tau^{\perp})\\
& = \Delta_L^h \tau^{\perp} + \varphi \odot (\frac{1}{2} \beta + (\nabla^g)^{\ast} \nabla^g\beta) \, .
\end{eqnarray*}

\noindent
Hence, the first equation in \eqref{eq:linearNSNSII} is equivalent to:
\begin{equation*}
\Delta_L^h \tau^{\perp} + \varphi \odot ((\nabla^g)^{\ast} \nabla^g\beta - \frac{1}{2} \beta) + 2 \lambda\,\varphi\otimes \varphi -  \mathfrak{f}\, \varphi\otimes \varphi  - \tau^{\perp} = 0\, .
\end{equation*}

\noindent
Solving by type, we obtain:
\begin{equation*}
\Delta_L^h \tau^{\perp} = \tau^{\perp}\, , \quad (\nabla^g)^{\ast} \nabla^g\beta = \frac{1}{2} \beta\, , \quad \mathfrak{f} = 2\lambda\, .
\end{equation*}

\noindent
Solutions to the first equation above correspond to infinitesimal essential Einstein deformations of $(\Sigma,h)$, which by \cite{KoisoIII} are necessarily trivial since $(\Sigma,h)$ is covered by the round sphere. Hence $\tau^{\perp} = 0$. This in turn implies $\mathcal{L}_{\beta^\sharp} h=0$. The second equation above follows automatically from $\beta^{\sharp}$ being a Killing vector field on an Einstein three-manifold with Einstein constant $1/2$. Moreover, the third equation above uniquely determines $\mathfrak{f}$ in terms of $\lambda$. Putting all together, we obtain:
\begin{equation*}
\tau = 2\lambda \, \varphi\otimes \varphi + \varphi \odot \beta\, .
\end{equation*} 

\noindent
With these provisos in mind, equation $\mathrm{Tr}_g(\tau) = 0$ is equivalent to $\lambda = 0$ whence:
\begin{equation*}
\tau = \varphi \odot \beta\, .
\end{equation*} 

\noindent
Conversely, such $\tau$ solves all equations in \eqref{eq:linearNSNSII} with $\eta = 0$ and hence we conclude.
\end{proof}

\noindent
The previous proposition shows that the 1-form $\beta$ descends to a 1-form on $\Sigma$ whose metric dual is a Killing vector field of $h$. Denote by $\cK(\Sigma,h)$ the vector space of Killing vector fields on $(\Sigma,h)$.

\begin{theorem}
\label{thm:infbijection}
There exists a canonical bijection:
\begin{equation*}
T_{[g,\varphi]}\mathfrak{M}^0_{\omega}(M) \to   \cK(\Sigma,h)\, , \quad (\tau,0) \mapsto \beta^{\sharp} \, ,
\end{equation*}

\noindent
where, for every $(\tau,0)\in T_{[g,\varphi]}\mathfrak{M}(M)$ we write uniquely $\tau = \varphi \odot \beta$.
\end{theorem}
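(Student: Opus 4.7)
The plan is to leverage Proposition \ref{prop:iffessential} as a near-complete characterization and reinterpret its content geometrically in terms of the fibre $\Sigma$. That proposition has already shown that every essential deformation takes the form $(\tau,0)$ with $\tau = \varphi \odot \beta$, where $\beta$ necessarily has no component along $\varphi$ (since the coefficient $\mathfrak{f}$ vanishes in Proposition \ref{prop:iffessential}) and satisfies $\nabla^g_{\varphi^{\sharp}}\beta = 0$ together with $\mathcal{L}_{\beta^{\sharp}} h = 0$. Hence I would begin by invoking this proposition to reduce the classification to the study of such 1-forms $\beta$ on $M$.

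Next, I would translate these two remaining conditions using the assumed product decomposition $(M,g) = (S^1 \times \Sigma, \varphi \otimes \varphi + h)$. The first condition $\nabla^g_{\varphi^{\sharp}}\beta = 0$ says that $\beta$ is parallel along the $S^1$ factor generated by $\varphi^{\sharp}$; since the metric splits as a Riemannian product, this is equivalent to $\beta$ being the pullback of a unique 1-form $\bar{\beta}$ on $\Sigma$ via the second projection. The second condition $\mathcal{L}_{\beta^{\sharp}} h = 0$ then reduces to the Killing equation $\mathcal{L}_{\bar{\beta}^{\sharp}} h = 0$ on $(\Sigma,h)$. This defines the map in the statement by sending $(\tau,0) = (\varphi \odot \beta, 0) \mapsto \bar{\beta}^{\sharp}$, taking values in $\cK(\Sigma,h)$.

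Finally, I would verify bijectivity. Injectivity is clear: the decomposition $\tau = \varphi \odot \beta$ together with the normalization $\beta \perp \varphi$ determines $\beta$ uniquely from $\tau$, and the descent $\beta \mapsto \bar{\beta}$ is bijective onto the pulled-back 1-forms. For surjectivity, given any $K \in \cK(\Sigma,h)$, I would let $\bar{\beta}$ be its metric dual, pull $\bar{\beta}$ back to a 1-form $\beta$ on $S^1 \times \Sigma$, and set $\tau := \varphi \odot \beta$; one then directly checks all four constraints in \eqref{eq:linearNSNSII} (the slice condition and trace condition are immediate since $\beta$ is coclosed and orthogonal to $\varphi$, while the Lichnerowicz equation and $\mathcal{L}$-equation are precisely encoded by $\bar{\beta}^{\sharp}$ being Killing on an Einstein three-manifold of Einstein constant $1/2$, exactly as used inside the proof of Proposition \ref{prop:iffessential}). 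The main subtlety I anticipate is not a serious obstacle but a bookkeeping point: ensuring that the descent of $\beta$ to $\Sigma$ is well-defined globally and canonical. This is handled by the Riemannian product structure, under which $\varphi^{\sharp}$-parallel transport is an isometry identifying all fibres, so that $\bar{\beta}^{\sharp}$ is unambiguously a Killing field of the typical fibre $\Sigma$.
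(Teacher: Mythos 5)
Your proposal is correct and follows essentially the same route as the paper: the paper's proof of Theorem \ref{thm:infbijection} simply combines Lemma \ref{lemma:linearNSNSII} and Proposition \ref{prop:iffessential} (whose proof already contains the converse check you carry out for surjectivity), and identifies $\tau=\varphi\odot\beta$ with the Killing field $\beta^{\sharp}$ on the fibre. Your additional bookkeeping — that $\nabla^g_{\varphi^{\sharp}}\beta=0$ forces $\beta$ to descend to $\Sigma$ through the product structure — is exactly the implicit step the paper leaves to the reader.
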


\begin{proof}
By Lemma \ref{lemma:linearNSNSII} and Proposition \ref{prop:iffessential} a pair $(\tau,\eta)$ is an essential deformation, that is, belongs to $T_{[g,\varphi]}\mathfrak{M}^0_{\omega}(M)$ if and only if $\eta = 0$ and $\tau =  \varphi \otimes \beta$
for a Killing vector field $\beta^{\sharp}$. This implies the statement of the theorem.
\end{proof}

\noindent
Taking $(\Sigma,h)$ to be the round sphere and assuming $M= S^1\times S^3$ we have $\dim(\cK(\Sigma,h)) = 6$ and thus $\dim(T_{[g,\varphi]}\mathfrak{M}^0_{\omega}(S^1\times S^3)) = 6$. On the other hand, in Section \ref{sec:ModuliS1S3} we constructed the full moduli space of manifolds of type $S^1\times S^3$ and in the case in which $(\Sigma,h)$ is the round sphere we proved that it was two-dimensional after removing the spurious deformation consisting in rescalings of $\varphi$. Since  $\dim(T_{[g,\varphi]}\mathfrak{M}^0_{\omega}(S^1\times S^3)) = 6$, we conclude that the space of essential deformations is obstructed and there exist \emph{four directions} in $T_{[g,\varphi]}\mathfrak{M}^0_{\omega}(S^1\times S^3)$ which cannot be integrated and therefore do not correspond to honest deformations. 
 

\section{Heterotic solitons with parallel torsion}
\label{sec:constantdilatonsection}

 
In this section we restrict our attention to Heterotic solitons with constant dilaton and parallel non-vanishing torsion, that is, Heterotic solitons that satisfy $\varphi = 0$ and $\nabla^g\alpha = 0$ with $\alpha \neq 0$. These Heterotic solitons with constant dilaton, in the specific case of four dimensions, can never be supersymmetric since the second equation in \eqref{eq:susytransHeterotic} is equivalent to $\alpha = \varphi$. Therefore, this class of Heterotic solitons provides a convenient framework to explore non-supersymmetric solutions of Heterotic supergravity.  


\subsection{Null Heterotic solitons}


Assuming $\varphi = 0$, the Heterotic soliton system \eqref{eq:HeteroticRicci1}--\eqref{eq:HeteroticRicci2} reduces to the following system of equations:
\begin{eqnarray}
\label{eq:HeteroticRicci11}
&\mathrm{Ric}^{g} +  \frac{1}{2} \alpha\otimes \alpha - \frac{1}{2} \vert\alpha\vert^2_g \, g + \kappa\,\mathfrak{v}(\mR_{\nabla^{\alpha}} \circ\mR_{\nabla^{\alpha}}) = 0\, , \\ 
\label{eq:HeteroticRicci22}
& \dd\alpha = 0\, , \quad \delta^g\alpha =\kappa (|\mR^{+}_{\nabla^{\alpha}}|^2_{g,\mathfrak{v}} - |\mR^{-}_{\nabla^{\alpha}}|^2_{g,\mathfrak{v}})\, , \quad  \kappa |\mR_{\nabla^{\alpha}}|^2_{g,\mathfrak{v}}   =  \vert\alpha\vert^2_g  \, ,  
\end{eqnarray}

\noindent
for pairs $(g,\alpha)$, where $g$ is a Riemannian metric on $M$ and $\alpha\in \Omega^1(M)$ is a 1-form. 

\begin{definition}
The \emph{null Heterotic soliton system} consists of equations \eqref{eq:HeteroticRicci11} and \eqref{eq:HeteroticRicci22}. Solutions of the null Heterotic soliton system are \emph{null Heterotic solitons}.
\end{definition}

\noindent
In the following we will study a particular case of the null Heterotic soliton system that is obtained by imposing $\alpha$ to be parallel. Assuming that $\nabla^g\alpha = 0$, the null Heterotic soliton system further reduces to:
\begin{eqnarray}
	\label{eq:motionHetconstantvarphi11}
	&\mathrm{Ric}^{g} +  \frac{1}{2} \alpha\otimes \alpha - \frac{1}{2} \vert\alpha\vert^2_g \, g + \kappa\,\mathfrak{v}(\mR_{\nabla^{\alpha}} \circ\mR_{\nabla^{\alpha}}) = 0\, , \\ 
	\label{eq:motionHetconstantvarphi22}
	&  |\mR^{+}_{\nabla^{\alpha}}|^2_{g,\mathfrak{v}} = |\mR^{-}_{\nabla^{\alpha}}|^2_{g,\mathfrak{v}}\, , \quad  \kappa |\mR_{\nabla^{\alpha}}|^2_{g,\mathfrak{v}}   =  \vert\alpha\vert^2_g  \, ,  
\end{eqnarray}

\noindent
Throughout this section, $\Conf_{\kappa}(M)$ will denote the set of pairs $(g,\alpha)$ as described above, with $\alpha$ being a non-vanishing 1-form satisfying $\nabla^g\alpha = 0$, and $\Sol_{\kappa}(M)$ will denote the space of null Heterotic solitons $(g,\alpha)$  with parallel 1-form $\alpha$. Also, we shall denote a vector and its metric dual by the same symbol. A direct computation proves the following lemma.

\begin{lemma}
\label{lemma:formulasparallel}
Let $\alpha$ be a parallel $1$-form. The following formulas hold:
\begin{eqnarray*}
& \mR^{\nabla^{\alpha}}_{v_1,v_2} = \mR^g_{v_1, v_2} + \frac{1}{4}(\vert\alpha\vert^2_g v_1\wedge v_2 + \alpha(v_2) \alpha\wedge v_1 - \alpha(v_1) \alpha\wedge v_2)\in \Omega^2(M)\, , \quad \forall\,\, v_1 , v_2 \in TM\, ,\\
& \mathfrak{v}(\mR^{\nabla^{\alpha}}\circ \mR^{\nabla^{\alpha}}) = \mathfrak{v}(\mR^{g}\circ \mR^{
	g}) -\vert\alpha\vert^2_g \mathrm{Ric}^g + \frac{\vert\alpha\vert^2_g}{4} (\vert\alpha\vert^2_g g - \alpha\otimes \alpha)\, ,
\end{eqnarray*} 
	
\noindent
where the curvature tensor is defined by $ \mR^{\nabla^{\alpha}}_{v_1,v_2}: = \nabla^{\alpha}_{v_1}\nabla^{\alpha}_{v_2} - \nabla^{\alpha}_{v_2}\nabla^{\alpha}_{v_1} -\nabla^{\alpha}_{[v_1,v_2]}$.
\end{lemma}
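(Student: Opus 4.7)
Both identities follow from a direct tensor calculation, relying throughout on the fact that $\nabla^g\alpha=0$ implies both $\nabla^g H=0$ for $H:=\ast\alpha$ (the Hodge operator being $\nabla^g$-parallel) and the vanishing identity $\mR^g(X,Y)\alpha^{\sharp}=0$ for all $X,Y$ (parallel vector fields lie in the kernel of the curvature operator).

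For the first identity, I would write $\nabla^{\alpha}=\nabla^g+K$ with the endomorphism-valued 1-form $K(v)w=-\tfrac{1}{2}H^{\sharp}(v,w)$. The standard formula for the curvature of a Koszul perturbation gives
\[
\mR^{\nabla^{\alpha}}_{v_1,v_2}=\mR^g_{v_1,v_2}+(\nabla^g_{v_1}K)(v_2)-(\nabla^g_{v_2}K)(v_1)+[K(v_1),K(v_2)],
\]
and the parallelism of $H$ reduces this to $\mR^g_{v_1,v_2}+\tfrac{1}{4}[H^{\sharp}(v_1),H^{\sharp}(v_2)]$. To compute the bracket I would use the identification $\mathfrak{so}(TM)\cong\Lambda^2 T^{\ast}M$ together with the 4-dimensional identity $v\lrcorner\ast\alpha=\ast(\alpha\wedge v^{\flat})$. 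Working in an orthonormal frame adapted so that $\alpha=|\alpha|\,e^1$ one has $H=|\alpha|\,e^{234}$, so only three of the endomorphisms $H^{\sharp}(e_j)$ are non-zero, and the three brackets $[H^{\sharp}(e_i),H^{\sharp}(e_j)]$ reduce to elementary commutators in $\mathfrak{so}(4)$. Extending the result bilinearly in $v_1,v_2$ produces the 2-form claimed in the statement.

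For the second identity, set $S:=\mR^{\nabla^{\alpha}}-\mR^g$ and use bilinearity and symmetry of the product $\mathfrak{v}(-\circ-)$ to write
\[
\mathfrak{v}(\mR^{\nabla^{\alpha}}\circ \mR^{\nabla^{\alpha}})=\mathfrak{v}(\mR^g\circ \mR^g)+2\,\mathfrak{v}(\mR^g\circ S)+\mathfrak{v}(S\circ S).
\]
For the cross term I would expand $S$ into its three monomials and contract each against $\mR^g$ using the local expression $\mathfrak{v}(A\circ B)(v_1,v_2)=\sum_{i,k,l}A(v_1,e_i,e_k,e_l)B(v_2,e_i,e_k,e_l)$. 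The trace identity $\sum_i \mR^g(v_1,e_i,v_2,e_i)=-\mathrm{Ric}^g(v_1,v_2)$ handles the $|\alpha|^2\,v_1\wedge v_2$ monomial, while the two $\alpha\wedge(-)$ monomials reduce, after applying the curvature symmetries and the first Bianchi identity, to multiples of $\mR^g(v_1,\alpha^{\sharp},\alpha^{\sharp},v_2)$ and $\mathrm{Ric}^g(v_1,\alpha^{\sharp})$, both of which vanish by parallelism of $\alpha^{\sharp}$. Collecting the terms and symmetrizing in $(v_1,v_2)$ yields $2\,\mathfrak{v}(\mR^g\circ S)=-|\alpha|^2\,\mathrm{Ric}^g$. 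The purely algebraic term $\mathfrak{v}(S\circ S)$ is a finite sum of quadratic monomials in $g$ and $\alpha$; expanding, contracting the repeated frame index, and using $g(\alpha,\alpha)=|\alpha|^2$ produces $\tfrac{|\alpha|^2}{4}(|\alpha|^2 g-\alpha\otimes\alpha)$.

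The main obstacle is not conceptual but combinatorial: the proof amounts to careful bookkeeping, and the most error-prone step is tracking numerical prefactors and sign conventions between the identification $\mathfrak{so}_g(M)\cong\Lambda^2 T^{\ast}M$, the 4D Hodge star, and the normalization of $\mathfrak{v}(-\circ-)$ used in this paper (which omits the usual $\tfrac{1}{2}$ factor of the form inner product). Once the conventions are fixed, all verifications reduce to elementary linear algebra combined with the parallelism of $\alpha^\sharp$ and the standard symmetries of $\mR^g$.
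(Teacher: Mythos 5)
Your proposal is correct and is essentially the computation the paper has in mind: the paper offers no written argument beyond the remark that ``a direct computation proves the following lemma,'' and your route --- writing $\nabla^{\alpha}=\nabla^g+K$, using parallelism of $H=\ast\alpha$ to reduce the curvature difference to $\frac14[H^{\sharp}(v_1),H^{\sharp}(v_2)]$ evaluated in an adapted frame, and then expanding $\mathfrak{v}((\mR^g+S)\circ(\mR^g+S))$ with the cross term collapsing to $-\vert\alpha\vert^2_g\,\mathrm{Ric}^g$ because $\mR^g$ annihilates the parallel direction --- is exactly that direct computation, with the convention-sensitive steps (the identification $\mathfrak{so}_g(M)\cong\Lambda^2T^{\ast}M$, the trace identity $\sum_i\mR^g(v_1,e_i,v_2,e_i)=-\mathrm{Ric}^g(v_1,v_2)$, and the unnormalized $\mathfrak{v}(-\circ-)$) handled consistently with the paper's conventions. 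No gap to report.
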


\noindent
Exploiting the fact that $\alpha$ is parallel, equations \eqref{eq:motionHetconstantvarphi11} and \eqref{eq:motionHetconstantvarphi22} can be further simplified.

\begin{lemma}
Let $(g,\alpha)\in \Conf_{\kappa}(M)$. Then:
\begin{equation*}
|\mR^{+}_{\nabla^{\alpha}}|^2_{g,\mathfrak{v}} = |\mR^{-}_{\nabla^{\alpha}}|^2_{g,\mathfrak{v}}	\, ,
\end{equation*}

\noindent
whence the first equation in \eqref{eq:motionHetconstantvarphi22} automatically holds for every $(g,\alpha)\in \Conf_{\kappa}(M)$.
\end{lemma}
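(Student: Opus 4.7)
The plan is to recast the identity $|\mR^{+}_{\nabla^{\alpha}}|^2_{g,\mathfrak{v}} = |\mR^{-}_{\nabla^{\alpha}}|^2_{g,\mathfrak{v}}$ as the pointwise vanishing of a Pontryagin-type 4-form, and to then deduce this vanishing from the fact that $\alpha$ being parallel confines $\mR_{\nabla^{\alpha}}$, viewed in its form slot, to the rank-three distribution $\alpha^{\perp}$, inside which wedge products of 2-forms vanish for purely dimensional reasons.

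The first step is to record the pointwise algebraic identity
\begin{equation*}
\mathfrak{v}(\mR_{\nabla^{\alpha}} \wedge \mR_{\nabla^{\alpha}}) = \bigl(|\mR^{+}_{\nabla^{\alpha}}|^2_{g,\mathfrak{v}} - |\mR^{-}_{\nabla^{\alpha}}|^2_{g,\mathfrak{v}}\bigr)\,\nu_g\, ,
\end{equation*}
valid in four dimensions for any $\mathfrak{so}_g(M)$-valued 2-form. It follows by decomposing $\mR_{\nabla^{\alpha}} = \sum_a R^a \otimes T_a$ in a $\mathfrak{v}$-orthonormal frame $\{T_a\}$ of $\mathfrak{so}_g(M)$ and applying componentwise the standard four-dimensional identity $\omega \wedge \omega = (|\omega^+|^2_g - |\omega^-|^2_g)\,\nu_g$, which in turn comes from $\omega^{\pm}\wedge \omega^{\pm} = \pm |\omega^{\pm}|^2_g \nu_g$ and $\omega^+\wedge\omega^-=0$. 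Consequently, the claim reduces to showing $\mathfrak{v}(\mR_{\nabla^{\alpha}} \wedge \mR_{\nabla^{\alpha}}) = 0$ pointwise.

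The second step is to verify that $\mR^{\nabla^{\alpha}}_{\alpha^\sharp,\, v} = 0$ for every $v \in TM$. Since $\nabla^g \alpha = 0$, a standard commutation yields $\mR^g(v_1,v_2)\alpha^\sharp = 0$, and the pair-exchange symmetry $R^g_{ijkl} = R^g_{klij}$ of the Levi-Civita curvature then gives $\mR^g_{\alpha^\sharp,\, v} = 0$ as an element of $\Lambda^2 T^*M$. Substituting $v_1 = \alpha^\sharp$ into the formula of Lemma \ref{lemma:formulasparallel} and using $\alpha(\alpha^\sharp) = |\alpha|^2_g$ together with $\alpha \wedge \alpha = 0$, the torsion correction collapses identically, so indeed $\mR^{\nabla^{\alpha}}_{\alpha^\sharp,\, v} = 0$.

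The final step is dimensional. In any local orthonormal coframe $e^0 := \alpha/|\alpha|_g,\, e^1, e^2, e^3$ on $M$, the preceding step forces the form slot of $\mR_{\nabla^{\alpha}}$ to lie pointwise in $\Lambda^2 \alpha^\perp$, where $\alpha^\perp := \mathrm{span}(e^1, e^2, e^3) \subset T^*M$ has rank three. Writing $\mR_{\nabla^{\alpha}} = \sum_a R^a \otimes T_a$ as above, each $R^a$ is a 2-form valued in this rank-three subbundle, so every wedge $R^a \wedge R^b$ sits in $\Lambda^4 \alpha^\perp = 0$. Hence $\mathfrak{v}(\mR_{\nabla^{\alpha}} \wedge \mR_{\nabla^{\alpha}}) = \sum_a R^a \wedge R^a = 0$ pointwise, and combining with the first step delivers the result. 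No serious obstacle is anticipated; the whole argument leverages the simple observation that a parallel 1-form shrinks the effective dimension available to the curvature of $\nabla^{\alpha}$ from four down to three, making the square-wedge identically zero.
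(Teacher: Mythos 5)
Your proof is correct, and it rests on exactly the same key fact as the paper's argument: the contraction $\alpha\lrcorner \mR_{\nabla^{\alpha}}=0$, which you verify explicitly by plugging $v_1=\alpha^{\sharp}$ into Lemma \ref{lemma:formulasparallel} and using the pair symmetry of $\mR^g$ (the paper simply asserts this vanishing as a consequence of $\alpha$ being parallel, so your verification is a welcome addition). Where you diverge is in how the vanishing of the difference of norms is extracted from this fact: the paper rewrites $|\mR^{+}_{\nabla^{\alpha}}|^2_{g,\frv}-|\mR^{-}_{\nabla^{\alpha}}|^2_{g,\frv}$ as the pairing $\langle \mR_{\nabla^{\alpha}},\ast\mR_{\nabla^{\alpha}}\rangle_g$, notes that $\ast\mR_{\nabla^{\alpha}}=\alpha\wedge r$, and concludes by adjointness of $\alpha\wedge$ and $\alpha\lrcorner$; you instead identify the same quantity with the coefficient of the Pontryagin-type $4$-form $\frv(\mR_{\nabla^{\alpha}}\wedge\mR_{\nabla^{\alpha}})$ and kill it by the dimensional observation that each $2$-form component lies in $\Lambda^2$ of the rank-three bundle $\alpha^{\perp}$, so its wedge square sits in $\Lambda^4\alpha^{\perp}=0$. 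The two finishes are algebraically equivalent (both amount to the statement that a $2$-form annihilated by $\alpha^{\sharp}$ has vanishing self-intersection), but yours makes the Chern--Weil/instanton-density interpretation of the first equation in \eqref{eq:motionHetconstantvarphi22} transparent, while the paper's is marginally shorter; the only point to watch in your write-up is the normalization of $|\cdot|_{g,\frv}$ versus the choice of $\frv$-orthonormal frame $\{T_a\}$, which in any case only affects both sides of the identity by the same positive constant and so does not alter the conclusion.
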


\begin{proof}
The equality $|\mR^{+}_{\nabla^{\alpha}}|^2_{g,\mathfrak{v}} = |\mR^{-}_{\nabla^{\alpha}}|^2_{g,\mathfrak{v}}$ holds if and only if:
\begin{equation*}
\langle \mR_{\nabla^{\alpha}}, \ast \mR_{\nabla^{\alpha}} \rangle_g = 0\, .
\end{equation*}

\noindent
The fact that $\alpha$ is parallel implies $\alpha \lrcorner \mR_{\nabla^{\alpha}} = 0$.  Consequently, one can write:
\begin{equation*}
\ast \mR_{\nabla^{\alpha}} = \alpha\wedge r\, ,
\end{equation*}

\noindent
for a certain $\mathfrak{so}_g(M)$-valued 1-form $r\in \Omega^1(M,\mathfrak{so}_g(M))$. Therefore:
\begin{equation*}
\langle \mR_{\nabla^{\alpha}}, \ast \mR_{\nabla^{\alpha}} \rangle_g = \langle \mR_{\nabla^{\alpha}}, \alpha\wedge r\rangle_g = \langle \alpha \lrcorner \mR_{\nabla^{\alpha}}, r\rangle_g = 0\, ,
\end{equation*}

\noindent
and we conclude.
\end{proof}

\begin{remark}
\label{remark:formulacurvature3d}
In the following we will use on several occasions the following identity:
\begin{equation*}
\mR^h_{v_1,v_2} = \frac{s^h}{2} v_1\wedge v_2 + v_2\wedge \mathrm{Ric}^h(v_1) + \mathrm{Ric}^h(v_2)\wedge v_1\, , \qquad v_1,v_2 \in TN\, ,
\end{equation*}

\noindent
which yields the Riemann curvature tensor of a Riemannian metric $h$ on a three-dimensional manifold $N$ in terms of its Ricci curvature $\mathrm{Ric}^h$ and its scalar curvature $s^h$. In particular, using the previous formula it is easy to show that the contraction $\frv(\mR^h\circ \mR^h)$, defined exactly as we did in four dimensions in Section \ref{sec:Heterotic4d}, is given by:
\begin{equation}
\label{eq:RhoRh}
\frv(\mR^h\circ \mR^h) =  - 2\, \mathrm{Ric}^h\circ \mathrm{Ric}^h + 2 s^h \mathrm{Ric}^h + (2\vert \mathrm{Ric}^h\vert^2_h - (s^h)^2) h\, .
\end{equation}

\noindent
where:
\begin{equation*}
\mathrm{Ric}^h\circ \mathrm{Ric}^h(v_1,v_2) = h(\mathrm{Ric}^h(v_1),\mathrm{Ric}^h(v_2))\, , \qquad v_1, v_2\in TN\, .
\end{equation*}

\noindent
In particular, the norm of $\mathrm{R}^h$ is given by:
\begin{equation*}
\vert \mathrm{R}^h \vert_h^2 = \frac{1}{2} \mathrm{Tr}_h(\frv(\mR^h\circ \mR^h))  =  2 \vert \mathrm{Ric}^h \vert_h^2 - \frac{1}{2} (s^h)^2	\, .
\end{equation*}
\end{remark}

\noindent
Given a pair $(g,\alpha)\in \Conf_{\kappa}(M)$ we denote by $\cH\subset TM$ the rank-three distribution defined by the kernel of $\alpha$, which is integrable since the latter is parallel. We denote the corresponding foliation by $\cF_{\alpha}\subset M$.  

\begin{lemma}
\label{lemma:equationsparallel}
Let $(g,\alpha)\in \Conf_{\kappa}(M)$ be complete. Then, $(g,\alpha)\in \Sol_{\kappa}(M)$  if and only if the leaves of $\cF_{\alpha}$ endowed with the metric induced by $g$ are all isometric to a complete Riemannian three-manifold $(\Sigma,h)$ satisfying:
\begin{eqnarray}
\label{eq:equationsparallel1}
& -2\kappa\, \mathrm{Ric}^h\circ \mathrm{Ric}^h + (1 - 2\kappa \vert\alpha\vert_g^2) \mathrm{Ric}^h +\frac{\vert\alpha\vert_g^2}{2}(1 - \kappa\,\vert\alpha\vert_g^2 ) h = 0\, , \quad s^h = -\frac{1}{2} \vert\alpha\vert_g^2\, , 
\end{eqnarray}
	
\noindent
for a certain $\kappa > 0$. In particular, $\vert\mathrm{Ric}^h\vert^2_h = \frac{\vert\alpha\vert_g^2}{2\kappa}(1 -\frac{\kappa \vert\alpha\vert_g^2}{2} )$.
\end{lemma}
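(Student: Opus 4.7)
\smallskip
\noindent
\emph{Proof plan.} The plan is to reduce the analysis to the leaves of $\cF_{\alpha}$ via the parallelism of $\alpha$, and then to extract the stated equations by combining the Einstein equation \eqref{eq:motionHetconstantvarphi11} with the dilaton equation (the third in \eqref{eq:motionHetconstantvarphi22}), using the three-dimensional curvature identities recalled in Remark \ref{remark:formulacurvature3d}.

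First I would use that $\alpha$ is parallel and nowhere vanishing (in particular of constant norm) to conclude that $\cH = \ker\alpha$ is an integrable distribution with totally geodesic leaves, and that the flow of $\alpha^{\sharp}$ acts by leaf-preserving isometries on $(M,g)$. Together with the completeness of $(M,g)$, this implies, via a de Rham-type argument on a (possibly non-simply connected) Riemannian four-manifold supporting a parallel vector field, that all leaves of $\cF_{\alpha}$ endowed with the induced metric are isometric to a common complete Riemannian three-manifold $(\Sigma,h)$.

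The next step is to translate the Heterotic equations into intrinsic data on $(\Sigma,h)$. Since $\alpha$ is parallel, $\iota_{\alpha^\sharp}\mR^g=0$, which in particular implies $\mathrm{Ric}^g(\alpha^\sharp,\cdot)=0$ and $\mathfrak{v}(\mR^g\circ\mR^g)(\alpha^\sharp,\cdot)=0$, while on $\cH$ the tensors $\mathrm{Ric}^g$ and $\mR^g$ reduce to $\mathrm{Ric}^h$ and $\mR^h$, respectively. Substituting the second identity of Lemma \ref{lemma:formulasparallel} into \eqref{eq:motionHetconstantvarphi11}, I would check that the $\alpha\otimes\alpha$ component is automatically satisfied (a short algebraic cancellation) and that mixed components vanish. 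Restricting to $\cH$ and using the three-dimensional identity \eqref{eq:RhoRh} for $\mathfrak{v}(\mR^h\circ\mR^h)$ then yields a single tensor equation on $(\Sigma,h)$ involving $\mathrm{Ric}^h\circ\mathrm{Ric}^h$, $\mathrm{Ric}^h$, $s^h$, $|\mathrm{Ric}^h|_h^2$, $|\alpha|_g^2$ and $h$.

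To pin down $s^h$ I would then invoke the dilaton equation. Using Lemma \ref{lemma:formulasparallel} together with the 3D formula for $|\mR^h|_h^2$ from Remark \ref{remark:formulacurvature3d}, the norm $|\mR_{\nabla^{\alpha}}|^2_{g,\mathfrak{v}}$ becomes an explicit polynomial in $|\mathrm{Ric}^h|_h^2$, $s^h$ and $|\alpha|_g^2$; the dilaton equation thus reduces to a scalar identity. Combining this identity with the $h$-trace of the tensor equation above eliminates $|\mathrm{Ric}^h|_h^2$ and yields $s^h = -\tfrac12 |\alpha|_g^2$. Substituting this value back into the dilaton identity recovers the stated formula for $|\mathrm{Ric}^h|_h^2$, while substituting it into the tensor equation produces exactly the desired form \eqref{eq:equationsparallel1}. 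The converse direction is obtained by reversing these manipulations, which are all equivalences. The main obstacle I anticipate is the structural first step: ensuring that the leaves are not merely locally but globally isometric to a common $(\Sigma,h)$ under completeness alone, without any simply-connectedness assumption on $M$; the remaining computational steps are bookkeeping assembling the identities in Lemma \ref{lemma:formulasparallel} and Remark \ref{remark:formulacurvature3d}.
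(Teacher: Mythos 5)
Your proposal is correct and follows essentially the same route as the paper's proof: isometric leaves from the parallelism of $\alpha$ plus completeness, restriction of \eqref{eq:motionHetconstantvarphi11} to $\cH\otimes\cH$ via Lemma \ref{lemma:formulasparallel} and \eqref{eq:RhoRh}, and elimination of $\vert\mathrm{Ric}^h\vert^2_h$ by combining the trace of the resulting tensor equation with $\kappa\vert\mR_{\nabla^{\alpha}}\vert^2_{g,\mathfrak{v}}=\vert\alpha\vert^2_g$ to obtain $s^h=-\tfrac12\vert\alpha\vert^2_g$ and then \eqref{eq:equationsparallel1}. For the converse, recall that the remaining condition $\vert\mR^{+}_{\nabla^{\alpha}}\vert^2_{g,\mathfrak{v}}=\vert\mR^{-}_{\nabla^{\alpha}}\vert^2_{g,\mathfrak{v}}$ in \eqref{eq:motionHetconstantvarphi22} is automatic for every element of $\Conf_{\kappa}(M)$ by the preceding lemma, so reversing your manipulations indeed suffices.
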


\begin{proof}
If $g$ is complete then standard results in foliation theory imply that $\cF_{\alpha}$ has no holonomy and its leaves are all diffeomorphic. Furthermore, since $\alpha$ is parallel it is in particular Killing and its flow preserves the metric, whence all leaves are not only diffeomorphic but isometric to a Riemannian three-manifold $(\Sigma,h)$ when equipped with the metric induced by $g$. Using the fact that Equation \eqref{eq:motionHetconstantvarphi11} evaluated in $\alpha$ is automatically satisfied, it follows that it is equivalent to its restriction to $\cH \otimes \cH$. Since all the leaves are isometric, Equation \eqref{eq:motionHetconstantvarphi11} is satisfied if and only if its restriction to any leaf is satisfied. Denoting this leaf by $(\Sigma,h)$, where $h$ is the metric induced by $g$, the restriction of Equation \eqref{eq:motionHetconstantvarphi11} to $\Sigma$ reads:
\begin{eqnarray*}
(1-\kappa \vert\alpha\vert^2_g )\mathrm{Ric}^{h}   - \frac{1}{2} \vert\alpha\vert^2_g\, h + \kappa\,(\mathfrak{v}(\mR^{h}\circ \mR^{h})  + \frac{\vert\alpha\vert^4_g}{4} h )= 0\, .
\end{eqnarray*}

\noindent
where we have used Lemma \ref{lemma:formulasparallel} to expand $\mathfrak{v}(\mR^{h}\circ \mR^{h})$. Plugging now Equation \eqref{eq:RhoRh} into the previous equation, we obtain:
\begin{eqnarray}
\label{eq:prooflemmaparallelEinstein}
-2\kappa\, \mathrm{Ric}^{h}\circ \mathrm{Ric}^{h} + 	(1-\kappa \vert\alpha\vert^2_g + 2 \kappa s^h )\mathrm{Ric}^{h}  + \left ( \kappa \frac{\vert\alpha\vert^4_g}{4}- \frac{1}{2} \vert\alpha\vert^2_g + 2\kappa \vert \mathrm{Ric}^h\vert_h^2 - \kappa (s^h)^2 \right ) h = 0\, .
\end{eqnarray}

\noindent
Moreover, using Equation \eqref{eq:RhoRh} and Lemma \ref{lemma:formulasparallel} it can be seen that the second equation in \eqref{eq:motionHetconstantvarphi22}, $\kappa \vert \mR_{\nabla^{\alpha}} \vert_{g,\frv}^2 = \vert\alpha\vert^2_g$, is equivalent to:
\begin{equation*}
4\vert\mathrm{Ric}^h\vert^2_h - (s^h)^2  - \vert\alpha\vert^2_g s^h + \frac{3}{4} \vert\alpha\vert^4_g = \frac{2}{\kappa}\vert\alpha\vert^2_g\, .
\end{equation*}

\noindent
Combining this equation together with the trace of the previous equation we can isolate both $\vert\mathrm{Ric}^h\vert^2_h$ and $s^h$. Upon substitution into Equation \eqref{eq:prooflemmaparallelEinstein}, we get Equations \eqref{eq:equationsparallel1}.
\end{proof}

\begin{proposition}
\label{prop:muconditions}
Let $(g,\alpha)\in \Conf_{\kappa}(M)$ be complete and non-flat. Then, $(g,\alpha)\in \Sol_{\kappa}(M)$ is a null Heterotic soliton with parallel torsion if and only if  $2 \kappa \vert\alpha\vert^2_g\in\{1,2,3\}$ and 
 the leaves of $\cF_{\alpha}$ endowed with the metric induced by $g$ are all isometric to a complete Riemannian three-manifold $(\Sigma,h)$ whose principal Ricci curvatures $(\mu_1,\mu_1,\mu_2)$ are constant and satisfy:
\begin{itemize}[leftmargin=*]
	\item $\mu_1 = -\frac{1}{4\kappa}\, ,  \quad \mu_2 =  \frac{1}{4\kappa}$ if $2 \kappa \vert\alpha\vert^2_g = 1$.
	
	\

	\item $\mu_1 = 0\, ,  \quad \mu_2 =  -\frac{1}{2\kappa}$ if $2 \kappa \vert\alpha\vert^2_g = 2$.
	
	\
	
	\item $\mu_1 = -\frac{1}{4\kappa}\, ,  \quad \mu_2 =-\frac{1}{4\kappa}$ if $2 \kappa \vert\alpha\vert^2_g =3$.
\end{itemize}

\end{proposition}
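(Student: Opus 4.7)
The plan is to use Lemma \ref{lemma:equationsparallel} to reduce the statement to a purely algebraic problem on the typical leaf $(\Sigma,h)$ of the foliation $\cF_\alpha$, and then to exploit the fact that in dimension three the Ricci tensor is diagonalizable. Set $a := \vert\alpha\vert_g^2$, a positive constant since $\alpha$ is parallel and non-vanishing. By Lemma \ref{lemma:equationsparallel}, $(g,\alpha)\in \Sol_\kappa(M)$ is equivalent to the typical leaf $(\Sigma,h)$ being a complete Riemannian three-manifold satisfying \eqref{eq:equationsparallel1} together with $s^h = -a/2$.

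The first step is to diagonalize $\mathrm{Ric}^h$ in a pointwise orthonormal frame and to observe that the tensorial equation in \eqref{eq:equationsparallel1} is a polynomial of degree two in $\mathrm{Ric}^h$ which is diagonal in any Ricci eigenframe. Restricted to an eigenvector of eigenvalue $\mu$, it becomes the scalar quadratic
\begin{equation*}
P(\mu) := -2\kappa\, \mu^2 + (1 - 2\kappa a)\, \mu + \tfrac{a}{2}(1 - \kappa a) = 0\, ,
\end{equation*}
whose discriminant is identically equal to one. Hence $P$ has exactly two distinct roots $r_{\pm}$ and every principal Ricci curvature $\mu_i$ of $h$ lies in $\{r_+, r_-\}$.

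The second step combines this with the trace identity $\mu_1+\mu_2+\mu_3 = s^h = -\tfrac{a}{2}$ and enumerates the four a priori possible eigenvalue multisets $(r_+,r_+,r_+)$, $(r_+,r_+,r_-)$, $(r_+,r_-,r_-)$, $(r_-,r_-,r_-)$. Since these multisets have pairwise distinct total sums, the trace identity selects at most one of them at each point, and connectedness of $\Sigma$ guarantees that the selected multiset is globally constant on $\Sigma$. Solving the trace identity in each multiset reduces to a polynomial equation in $\kappa a$ and yields, respectively: $2\kappa a = 3$ with common value $-\tfrac{1}{4\kappa}$; $2\kappa a = 2$ with values $(0,0,-\tfrac{1}{2\kappa})$; $2\kappa a = 1$ with values $(-\tfrac{1}{4\kappa},-\tfrac{1}{4\kappa},\tfrac{1}{4\kappa})$; and the fourth case $a=0$, which is excluded by the parallelism of $\alpha$ and the assumption $\alpha\neq 0$.

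The converse direction is a direct substitution check in \eqref{eq:equationsparallel1}, and compatibility with the auxiliary norm constraint $\vert\mathrm{Ric}^h\vert^2_h = \tfrac{a}{2\kappa}(1-\tfrac{\kappa a}{2})$ of Lemma \ref{lemma:equationsparallel} is automatic in each case. The non-flat hypothesis ensures that none of the conclusions are vacuous, in particular ruling out the excluded branch $a=0$ that would correspond to a flat $(M,g)$. The main subtlety worth flagging, and the only step requiring genuine care beyond elementary algebra, is the constancy argument for the principal Ricci curvatures, since a priori the three eigenvalue functions $\mu_i(p)$ could each jump between $r_+$ and $r_-$ from point to point; the observation that the four admissible multisets have pairwise distinct sums is what, together with the trace identity and connectedness, freezes the spectrum of $\mathrm{Ric}^h$ globally on $\Sigma$ and reduces the problem to the discrete algebraic enumeration above.
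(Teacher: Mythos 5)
Your proposal is correct and follows essentially the same route as the paper's proof: both reduce the problem via Lemma \ref{lemma:equationsparallel} to the quadratic equation satisfied by the Ricci endomorphism of the leaf, whose roots are $-\frac{1}{2}\vert\alpha\vert^2_g$ and $\frac{1-\kappa\vert\alpha\vert^2_g}{2\kappa}$, and then enumerate the admissible eigenvalue multisets against the trace constraint $s^h=-\frac{1}{2}\vert\alpha\vert^2_g$. Your explicit distinct-sums argument for the global constancy of the Ricci spectrum is a useful elaboration of a step the paper treats tersely, but the argument is the same in substance.
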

 
\begin{proof}
By Lemma \ref{lemma:equationsparallel}, a pair $(g,\alpha)\in \Conf_{\kappa}(M)$ is a solution of Equations \eqref{eq:motionHetconstantvarphi11} and \eqref{eq:motionHetconstantvarphi22} if and only if Equations \eqref{eq:equationsparallel1} are satisfied. The first equation in \eqref{eq:equationsparallel1} gives a second-degree polynomial satisfied by the Ricci endomorphism of $h$, whose roots are $-\frac{\vert\alpha\vert^2_g}{2}$ and $\frac{1-\vert\alpha\vert^2_g \, \kappa}{2 \kappa}$. Therefore, solving the algebraic equation we find that the principal Ricci curvatures $(\mu_1,\mu_1,\mu_2)$ of $h$ are constant and given by one of the following possibilities:
\begin{eqnarray*}
& (\mu_1 = -\frac{\vert\alpha\vert^2_g}{2} , \mu_2 = -\frac{\vert\alpha\vert^2_g}{2} )\, , \quad (\mu_1 = -\frac{\vert\alpha\vert^2_g}{2} , \mu_2 = \frac{1-\vert\alpha\vert^2_g \, \kappa}{2 \kappa} )\, , \\
& (\mu_1 =   \frac{1-\vert\alpha\vert^2_g \, \kappa}{2 \kappa} , \mu_2 =  -\frac{\vert\alpha\vert^2_g}{2} )\, , \quad (\mu_1 =   \frac{1-\vert\alpha\vert^2_g \, \kappa}{2 \kappa} , \mu_2 = \frac{1-\vert\alpha\vert^2_g \, \kappa}{2 \kappa}  )\, ,
\end{eqnarray*}

\noindent
Imposing now that the scalar curvature of $h$ is $s^h = 2\mu_1+\mu_2$ and using the second equation in \eqref{eq:equationsparallel1}, we obtain the cases and relations given in the statement of the proposition.
\end{proof}

\begin{remark}
\label{remark:universalcover}
Since $\alpha$ is by assumption parallel, if $(g,\alpha)\in \Sol_{\kappa}(M)$ is complete then the lift $(\hat{g},\hat{\alpha})$ of $(g,\alpha)$ to the universal cover $\hat{M}$ of $M$ of is isometric to the following model:
\begin{equation*}
(\hat{M},\hat{g},\hat{\alpha}) = (\mathbb{R}\times N, \dd t^2 + \hat{h} \, , \vert \alpha\vert_g  \dd t)\, ,
\end{equation*}
	
\noindent
where $N$ is a simply connected three-manifold, $t$ is the Cartesian coordinate of $\mathbb{R}$ and $\hat{h}$ is a complete Riemannian metric on $N$ whose principal Ricci curvatures satisfy the conditions established in Proposition \ref{prop:muconditions}. Moreover, the foliation $\cF_{\alpha}\subset M$ associated to $\alpha$ is induced by the standard foliation of $\mathbb{R}\times N$ whose leaves are given by $\left\{ x\right\} \times N \subset \mathbb{R}\times N$, $x\in \mathbb{R}$.
\end{remark}

\noindent
As stated in Proposition \ref{prop:muconditions}, the principal Ricci curvatures are constant and they can take at most two different values, $\mu_1$ and $\mu_2$. Suppose $\mu_1 \neq \mu_2$ and assume that $\mu_2$ is the eigenvalue of simple multiplicity. The eigenvectors with eigenvalue $\mu_2$ define a rank-one distribution  $\cV\subset T\Sigma$, which may not be trivializable. Therefore, going perhaps to a covering of $\Sigma$, we assume that $\cV$ is trivializable and fix a unit trivialization $\xi\in \Gamma(\cV)$, whose metric dual we denote by $\eta\in \Omega^1(\Sigma)$. We define the endomorphism $\cC\in \End(T\Sigma)$ as follows:
\begin{equation*}
\cC(v) := \nabla^h_v\xi\, , \quad v\in T\Sigma\, ,
\end{equation*}

\noindent
which we split $\cC = \cA + \cS$ in its antisymmetric $\cA$ and symmetric $\cS$ parts.

\begin{lemma}
\label{lemma:equationsC}
Assume $\mu_1 \neq \mu_2$. The following formulas hold:  
\begin{eqnarray*}
& \nabla^h_{\xi} \xi = 0\, , \quad \delta^h \eta = 0\, , \quad \mathrm{Tr}(\cC) = 0\, ,\quad \nabla^h_{\xi} \cC = 0\, , \quad \cC^2 = - \frac{\mu_2}{2} \Id_{\cH}\, ,\\
& \mathcal{L}_{\xi} \cC = 0\, , \quad \mathcal{L}_{\xi} \cA = -2 \cS\cA\, , \quad \mathcal{L}_{\xi} \cS = 2\cS\cA\, , \quad \mathcal{L}_{\xi} \dd\eta = 0\, , 
\end{eqnarray*}

\noindent
where $\cH$ is the orthogonal complement of $\cV$ in $T\Sigma$.
\end{lemma}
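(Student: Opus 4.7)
The plan is to extract the nine identities from the structural form $\mathrm{Ric}^h = \mu_1\, h + (\mu_2 - \mu_1)\,\eta\otimes\eta$ combined with the two-dimensionality of $\cH$, proceeding in three phases.

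First, items (1)--(3) will follow from the contracted Bianchi identity. Since $s^h = 2\mu_1 + \mu_2$ is constant by Proposition \ref{prop:muconditions}, one has $\delta^h \mathrm{Ric}^h = 0$; computing this divergence explicitly produces $(\mu_2-\mu_1)(\delta^h\eta\cdot\eta - \nabla^h_\xi \eta) = 0$. Evaluating on $\xi$ and using $\eta(\xi) = 1$ together with $\eta(\nabla^h_\xi \xi) = \tfrac12\xi(|\xi|^2_h) = 0$ will force $\delta^h\eta = 0$ and $\nabla^h_\xi\eta = 0$, giving items (1), (2), and (3) at once (via $\mathrm{Tr}(\cC) = -\delta^h\eta$). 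As preparation for the next step I also record that the constancy of $|\xi|^2_h = 1$ implies $\cC(T\Sigma)\subset\cH$, and combining $\cC\xi = 0$ with the $h$-(anti)symmetry of $\cS$ and $\cA$ yields $\cS\xi = \cA\xi = 0$, so both $\cS$ and $\cA$ restrict to endomorphisms of the two-dimensional space $\cH$.

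Next, for items (4) and (5), I will apply the Ricci identity. Since $\nabla^h_\xi\xi = 0$, a direct manipulation of $\mR^h_{\xi,Y}\xi = \nabla^h_\xi\nabla^h_Y\xi - \nabla^h_{[\xi,Y]}\xi$ yields $\mR^h_{\xi,Y}\xi = (\nabla^h_\xi\cC)(Y) + \cC^2(Y)$. The three-dimensional Riemann formula of Remark \ref{remark:formulacurvature3d}, applied to the diagonal Ricci tensor, independently evaluates the left-hand side at $-\tfrac{\mu_2}{2}\, Y$ for $Y\in\cH$. Taking the $\cH$-trace kills $\nabla^h_\xi \cC$ (because $\mathrm{Tr}(\cC|_\cH) = 0$ is preserved along $\xi$) and produces $\mathrm{Tr}_\cH(\cC^2) = -\mu_2$. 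The two-dimensional linear algebra of a traceless symmetric $\cS$ and an antisymmetric $\cA$ on $\cH$ simultaneously yields the anticommutation $\cA\cS + \cS\cA = 0$ and the scalar identity $\cC^2|_\cH = \tfrac12(|\cS|^2 - |\cA|^2)\,\Id_\cH$; combining these with the trace relation yields item (5), and feeding back into the Ricci identity yields item (4). This is the only genuinely non-routine step, and I expect it to be the main obstacle: it crucially uses both the two-dimensionality of $\cH$ (to upgrade a trace relation into a full operator identity) and the specific value $-\mu_2/2$ supplied by the structural form of $\mathrm{Ric}^h$.

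Finally, for items (6)--(9), I will invoke the standard identity $(\mathcal{L}_\xi T)(Y) = (\nabla^h_\xi T)(Y) + T(\nabla^h_Y\xi) - \nabla^h_{TY}\xi$ for $(1,1)$-tensors. For $T = \cC$ the last two terms both equal $\cC^2 Y$ and cancel, giving item (6). Next, since $\nabla^h h = 0$, the tensors $\nabla^h_\xi \cA$ and $\nabla^h_\xi \cS$ are respectively $h$-antisymmetric and $h$-symmetric; their sum is $\nabla^h_\xi\cC = 0$ by item (4), so each summand must vanish separately. Applying the Lie-versus-nabla identity once more to $T = \cA$ and $T = \cS$, together with the anticommutation $\cA\cS = -\cS\cA$ established above, then produces items (7) and (8). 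Item (9) finally follows from $\cA\xi = 0$: indeed $\iota_\xi\, \dd\eta = 2\,h(\cA\xi,\cdot) = 0$, so Cartan's formula gives $\mathcal{L}_\xi\, \dd\eta = \dd\iota_\xi\, \dd\eta = 0$.
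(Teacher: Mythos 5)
Your proposal is correct and follows essentially the same route as the paper: the contracted Bianchi identity applied to $\mathrm{Ric}^h=\mu_1 h+(\mu_2-\mu_1)\eta\otimes\eta$ for the first three identities, the three-dimensional curvature formula of Remark \ref{remark:formulacurvature3d} combined with two-dimensional trace-free linear algebra on $\cH$ to obtain $\nabla^h_\xi \cC=0$ and $\cC^2=-\frac{\mu_2}{2}\Id_{\cH}$, and the Lie-versus-covariant-derivative identity together with Cartan's formula for the remaining ones. The only differences are cosmetic: you take the $\cH$-trace before invoking the Cayley--Hamilton-type identity (the paper does it in the opposite order), and you state explicitly the anticommutation $\cS\cA+\cA\cS=0$ that the paper uses implicitly in its computation of $\mathcal{L}_\xi\cA$ and $\mathcal{L}_\xi\cS$.
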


\begin{proof}
The condition of $h$ having a constant Ricci eigenvalue $\mu_1$ of multiplicity two and a simple constant Ricci eigenvalue $\mu_2$ is equivalent to $h$ satisfying:
\begin{equation*}
\mathrm{Ric}^h = \mu_1\,h + (\mu_2 - \mu_1)\, \eta\otimes \eta\, , \quad s^h = 2\mu_1 + \mu_2\, ,
\end{equation*}

\noindent
where $\eta$ is the metric dual of a unit eigenvector with eigenvalue $\mu_2$. Since $\mu_1\neq \mu_2$, the divergence of the previous equation together with the contracted Bianchi identity yields:
\begin{equation*}
\nabla^h_{\xi}\eta = \eta \,\delta^h\eta\, ,
\end{equation*}

\noindent
which in turn implies, using that $\xi$ is of unit norm, $\nabla^h_{\xi}\xi = 0$, $\delta^h \eta = 0$ and consequently $\mathrm{Tr}(\cC) = 0$. Furthermore, for every vector field $v$ orthogonal to $\xi$ we compute:
\begin{equation*}
\dd \eta(\xi,v) = - \eta(\nabla^h_{\xi} v - \nabla^h_{v} \xi) = -\eta(\nabla^h_{\xi} v) = - h(\xi, \nabla^h_{\xi} v) = 0\, ,
\end{equation*}

\noindent
implying $\mathcal{L}_{\xi} \dd\eta = 0$. Since $\cC$ is trace-free, its square satisfies:
\begin{equation*}
\cC^2 = \frac{1}{2} \mathrm{Tr}(\cC^2) \Id_{\cH}\, .
\end{equation*} 

\noindent
On the other hand, using Remark \ref{remark:formulacurvature3d} we obtain:
\begin{equation*}
\mR^h_{v,\xi} = \frac{\mu_2}{2}\,\eta\wedge v  \, , \qquad \mR^h_{v_1,v_2} = \frac{\mu_2 - 2\mu_1}{2}\,v_1\wedge v_2 \, .
\end{equation*}

\noindent
where $v ,v_1,v_2\in \mathfrak{X}(\Sigma)$ are orthogonal to $\xi$. Taking the interior product with $\xi$ in the first equation above we obtain:
\begin{equation*}
\frac{\mu_2}2 v=\mR^h_{v ,\xi}\xi=-\nabla_\xi^h \nabla_v^h \xi-\nabla_{[v,\xi]}^h \xi=-\nabla_\xi^h (\cC(v))+\cC(\nabla_\xi^h v)-\cC^2(v)=-(\cC^2+\nabla^h_\xi \cC)(v)\,. 
\end{equation*} 

\noindent
This shows that $\nabla_\xi^h \cC$ restricted to $\cH$ is a multiple of $\mathrm{Id}_{\cH}$, whence it vanishes since it is trace-free. We conclude:
\begin{equation}
\nabla_\xi^h \cC=0,\qquad \cC^2=-\frac{\mu_2}2\mathrm{Id}_{\cH}\, .
\end{equation}

\noindent
Clearly $(\mathcal{L}_{\xi}\cC)(\xi) = 0$. For $v\in \cH$, we compute:
\begin{equation*}
(\mathcal{L}_{\xi}\cC)(v) = \mathcal{L}_{\xi}(\cC(v)) -\cC(\mathcal{L}_{\xi}v)  =  \nabla^h_{\xi}(\cC(v)) - \nabla^h_{\cC(v)}\xi - \cC(\nabla^h_{\xi}v) + \cC(\nabla^h_v\xi) = 0\, ,
\end{equation*}

\noindent
upon use of $\nabla^h_{\xi}\cC = 0$. Furthermore, we have:
\begin{eqnarray*}
& (\mathcal{L}_{\xi}\cA)(v) = \mathcal{L}_{\xi}(\cA(v)) -\cA(\mathcal{L}_{\xi}v)  =  \nabla^h_{\xi}(\cA(v)) - \nabla^h_{\cA(v)}\xi - \cA(\nabla^h_{\xi}v) + \cA(\nabla^h_v\xi)\\
& = -\cC(\cA(v)) + \cA(\cC(v)) = - 2\cS\cA(v)\, ,
\end{eqnarray*}

\noindent
where we have used $\nabla^h_{\xi} \cA = 0$. A similar computation, using $\nabla^h_{\xi} \cS = 0$ shows that $\mathcal{L}_{\xi}\cA = 2\cS\cA$ whence $\mathcal{L}_{\xi}\cC = 0$. The last equation in the statement is a direct consequence of Cartan's formula for the Lie derivative of a form and hence we conclude.
\end{proof}

\noindent
In the following result, we denote by $t$ the Cartesian coordinate of $\mathbb{R}$ and we denote by $\mathbb{H}$ the three-dimensional hyperbolic space equipped with a metric of constant negative sectional curvature. Furthermore, we denote by $\mathrm{E}(1,1)$ the simply connected group of rigid motions of two-dimensional Minkowski space. This is a solvable and unimodular Lie group, see \cite{Milnor} for more details.

\begin{theorem}
\label{thm:solutions}
Let $M$ be a compact and oriented four-manifold and $\kappa > 0$. A non-flat pair $(g,\alpha)\in \Conf_{\kappa}(M)$ is a null Heterotic soliton with parallel torsion if and only if: 

\

\begin{enumerate}[leftmargin=*]
\item Relations $ \kappa \vert\alpha\vert^2_g = 1$  and $(\mu_1 = -\frac{1}{4\kappa} , \mu_2 =  \frac{1}{4\kappa})$ hold. In particular, there exists a double cover of $(\Sigma,h)$ that admits a Sasakian structure $(h_S,\xi_S)$ determined by:
\begin{equation*}
\xi_S := \sqrt{\dfrac{\mu_2}{2}} \xi\, , \quad \mathrm{Ric}^h(\xi) = \frac{1}{4\kappa}\xi\, , \quad \vert\xi\vert^2_h = 1\, , \quad \xi\in \mathfrak{X}(M)\, ,
\end{equation*}

\noindent
as well as:
\begin{equation*}
	h_S(v_1,v_2)= \left\lbrace \begin{matrix}
 - 2 h(\cA\circ \cC (v_1) , v_2) & \mathrm{if} \quad v_1,v_2 \in \cH \\ \\
   0 &  \mathrm{if} \quad v_1 \in \cH,\ v_2 \in \mathrm{Span}(\xi) \\ \\
 \dfrac{\mu_2}{2}h( v_1 , v_2) & \quad \,   \mathrm{if} \quad v_1,v_2 \in \mathrm{Span}(\xi)  
\end{matrix} \right.    
\end{equation*}

\noindent
where $(\Sigma,h)$ denotes the typical leaf of the foliation $\cF_{\alpha}\subset M$ defined by $\alpha$.

\

\item  Relation $ \kappa \vert\alpha\vert^2_g = 1$  holds and the lift $(\hat{g},\hat{\alpha})$ of $(g,\alpha)$ to the universal cover $\hat{M}$ of $M$ is isometric to either $\mathbb{R}\times \widetilde{\mathrm{Sl}}(2,\mathbb{R})$ or $\mathbb{R}\times \mathrm{E}(1,1)$ equipped with a left-invariant metric with constant principal Ricci curvatures given by $(0,0,-\frac{1}{2\kappa})$ and $\hat{\alpha} = \vert\alpha\vert_g \dd t$.
	
\
	
\item Relation $ \kappa \vert\alpha\vert^2_g = \frac{3}{2}$  holds and the lift $(\hat{g},\hat{\alpha})$ of $(g,\alpha)$ to the universal cover $\hat{M}$ of $M$ is isometric to $\mathbb{R}\times \mathbb{H}$ equipped with the standard product metric of scalar curvature $-\frac{3}{4\kappa}$ and $\hat{\alpha} = \vert\alpha\vert_g \dd t$.
\end{enumerate}
\end{theorem}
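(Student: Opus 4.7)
The strategy is to reduce the problem to a three-case analysis via Proposition \ref{prop:muconditions}, then identify the geometry of each case using Lemma \ref{lemma:equationsC} and Remark \ref{remark:universalcover}. Since $M$ is compact, $g$ is complete, so Proposition \ref{prop:muconditions} forces $2\kappa|\alpha|_g^2\in\{1,2,3\}$ and constant principal Ricci curvatures $(\mu_1,\mu_1,\mu_2)$ on the typical leaf $(\Sigma,h)$. By Remark \ref{remark:universalcover}, the universal cover of $M$ is isometric to $(\mathbb{R}\times N, dt^2+\hat h,|\alpha|_g\,dt)$ for a complete simply connected three-manifold $(N,\hat h)$ sharing the same Ricci spectrum as $(\Sigma,h)$. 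The problem thus reduces to identifying $N$ in each of the three spectral cases, and then verifying the converse by direct substitution in Equations \eqref{eq:motionHetconstantvarphi11}--\eqref{eq:motionHetconstantvarphi22} using Lemma \ref{lemma:formulasparallel}.

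For item (3), i.e.\ the spectrum $(-\tfrac1{4\kappa},-\tfrac1{4\kappa},-\tfrac1{4\kappa})$, the metric $\hat h$ is Einstein, which in dimension three forces constant sectional curvature. By completeness and simple connectedness, $N$ is the hyperbolic space $\mathbb{H}$ and the scalar curvature reads $s^{\hat h}=-3/(4\kappa)$, matching the statement.

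For item (1), i.e.\ the Sasakian case where $\mu_2=1/(4\kappa)>0$, Lemma \ref{lemma:equationsC} yields $\mathcal{C}^2=-\tfrac{\mu_2}{2}\operatorname{Id}_{\mathcal{H}}$, a negative multiple of the identity. Hence $\sqrt{2/\mu_2}\,\mathcal{C}|_{\mathcal{H}}$ is an almost complex structure on $\mathcal{H}$. After passing to a double cover to trivialize the rank-one distribution $\mathcal{V}$ (if necessary), I would verify directly from the prescribed rescaling that $\xi_S=\sqrt{\mu_2/2}\,\xi$ is a unit Killing vector field for $h_S$ and that $\nabla^{h_S}\xi_S$ restricts to an almost complex structure on $\xi_S^{\perp}$, which is the defining condition of a three-dimensional Sasakian structure. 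The verification relies on the identities $\mathcal{L}_\xi\mathcal{C}=0$, $\mathcal{L}_\xi\mathcal{A}=-2\mathcal{S}\mathcal{A}$, $\mathcal{L}_\xi\mathcal{S}=2\mathcal{S}\mathcal{A}$ and $\mathcal{C}^2=-\tfrac{\mu_2}{2}\operatorname{Id}_{\mathcal{H}}$ supplied by Lemma \ref{lemma:equationsC}.

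For item (2), i.e.\ the spectrum $(0,0,-1/(2\kappa))$, one has $\mu_2<0$ and $\mathcal{C}^2=\tfrac{1}{4\kappa}\operatorname{Id}_{\mathcal{H}}$, so $\mathcal{C}$ is a traceless involution (up to scaling) with real eigenvalues $\pm\tfrac{1}{2\sqrt{\kappa}}$. I would pick an orthonormal eigenframe $\{e_1,e_2\}\subset\mathcal{H}$ for $\mathcal{C}$ and combine it with $\xi$ to obtain a global orthonormal frame on $N$. From Lemma \ref{lemma:equationsC} (constancy of the Ricci spectrum, parallelism $\nabla_\xi^{\hat h}\mathcal{C}=0$ and Killing character of $\xi$), the covariant derivatives of this frame are constant, so the Lie brackets $[e_i,e_j]$ and $[\xi,e_i]$ are constant linear combinations of frame vectors. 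This exhibits $N$ as a three-dimensional Lie group endowed with a left-invariant metric for which the frame is left-invariant. Comparing the structure constants with Milnor's classification of left-invariant metrics on three-dimensional unimodular Lie groups, the requirement of Ricci eigenvalues of signature $(0,0,-)$ selects precisely the universal cover $\widetilde{\mathrm{Sl}}(2,\mathbb{R})$ (simple case) and $\mathrm{E}(1,1)$ (solvable case).

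The main obstacle I expect is item (2): the passage from the differential information encoded in $\mathcal{C}$, $\xi$ and Lemma \ref{lemma:equationsC} to the algebraic identification of $N$ as a specific Lie group. Ruling out other candidate unimodular three-dimensional Lie groups requires a careful inspection of Milnor's table of Ricci eigenvalues, because the constraint $(0,0,-)$ is degenerate and a priori compatible with several structure-constant signs that must be eliminated by also using the eigenvector alignment dictated by $\mathcal{C}$.
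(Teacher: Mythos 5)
Your overall skeleton (reduce via Proposition \ref{prop:muconditions} and Remark \ref{remark:universalcover} to a three-case analysis of the leaf geometry, then identify each case) is the same as the paper's, and your treatments of item (3) (three-dimensional Einstein $\Rightarrow$ hyperbolic) and, in outline, of item (1) (rescale $\cC$ into an almost complex structure on $\cH$, pass to a double cover, check a K-contact/Sasakian criterion with respect to the auxiliary metric $h_S$) are consistent with what the paper does; for (1) the paper's actual work consists in checking the contact-metric compatibility identities for $(\xi_S,\eta_S,\Psi,h_S)$, positive definiteness of $h_S$ via $\det(\cA\cC)>0$ and non-degeneracy of $\dd\eta_S$ via $\fra\neq 0$, and $\mathcal{L}_{\xi_S}\Psi=0$, which your sketch would have to reproduce in substance.

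The genuine gap is in item (2). First, when $\mu_2<0$ the operator $\cC=\cS+\cA$ satisfies $\cC^2=-\tfrac{\mu_2}{2}\Id_{\cH}$ but is \emph{not} symmetric: an orthonormal eigenframe for $\cC$ exists only if $\cA=0$, and by the identity $\fra^2=\frs^2+\tfrac{\mu_2}{2}$ one has $\frs\neq 0$ always, while $\fra\neq 0$ is exactly the $\widetilde{\mathrm{Sl}}(2,\mathbb{R})$ case. So your frame choice either fails or silently restricts you to the $\mathrm{E}(1,1)$ subcase; the paper instead diagonalizes only the symmetric part $\cS$. Relatedly, $\xi$ is not Killing for $h$ here (its Killing defect is precisely $\cS\neq0$), so the ``Killing character of $\xi$'' you invoke is not available from Lemma \ref{lemma:equationsC}. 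Second, and more seriously, the assertion that ``the covariant derivatives of this frame are constant, so the Lie brackets are constant'' does not follow from Lemma \ref{lemma:equationsC}: constant principal Ricci curvatures do not imply local homogeneity in dimension three, and this is exactly the crux of the paper's argument. The paper must introduce the structure functions $a,b$ (and $\frs,\fra$), derive from the vanishing curvature components the transport equations $\xi(a)=-(\frs a+\fra b)$, $\xi(b)=\fra a+\frs b$, hence $\xi(\xi(a))=-\tfrac{\mu_2}{2}a$ with $-\tfrac{\mu_2}{2}>0$, and then use the global hypothesis (a co-compact group of isometries preserving the Ricci eigenspaces, coming from compactness of $M$) to conclude that $a,b$ are bounded and therefore vanish, after which $\frs,\fra$ are shown to be constant; only then is $\Sigma$ a unimodular Lie group, identified through its Killing form as $\mathfrak{sl}(2,\mathbb{R})$ or the Lie algebra of $\mathrm{E}(1,1)$. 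Your proposal skips this local-to-homogeneous step entirely and locates the difficulty instead in Milnor's table, which is the easy part; without the boundedness argument the identification of $N$ as a Lie group is unjustified.
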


\begin{remark}
Reference \cite[Corollary 4.7]{Milnor} proves that both $\widetilde{\mathrm{Sl}}(2)$ and $\mathrm{E}(1,1)$ do admit Riemannian metrics with Ricci principal curvatures $(0,0,-\frac{1}{2\kappa})$.
\end{remark}

\begin{proof}
Let $(g,\alpha)\in\Sol_{\kappa}(M)$. To prove the statement it is enough to assume that $M$ is a simply connected four-manifold admitting a co-compact discrete group acting on $(M,g)$ by isometries that preserve $\alpha$. In that case, $(M,g) = (\mathbb{R}\times \Sigma, \dd t^2 + h)$ and $\alpha = \vert\alpha\vert_g \dd t$, see Remark \ref{remark:universalcover}. Assume first that $\mu_1 = \mu_2$. Then, Proposition \ref{prop:muconditions} immediately implies that $(\Sigma,h)$ is isometric to $\mathbb{H}$ equipped with the standard metric of scalar curvature $-\frac{3}{4\kappa}$, whence item $(3)$ follows. Therefore assume that $\mu_1 \neq \mu_2$ and, possibly going to a double cover, denote by $\xi$ a unit-norm eigenvector of $\mathrm{Ric}^h$ with simple eigenvalue $\mu_2$. Furthermore, assume $\mu_2 \neq 0$ since by Proposition \ref{prop:muconditions}, $\mu_2=0$ is not allowed. Using the notation introduced in Lemma \ref{lemma:equationsC}, consider the decomposition $\cC=\cS+\cA$ into its symmetric and skew-symmetric parts and let $\Sigma_0 \subset \Sigma$ denote a connected component of the open set of $\Sigma$ where $\cS$ and $\cA$ are both non-vanishing. Since $\cA$ is skew and $\tr(\cS)=0$, there exist smooth positive functions $\frs$ and $\fra$ on $\Sigma_0$ with $\cS^2=\frs^2\Id_{\cH}$ and $\cA^2=-\fra^2\Id_{\cH}$. By Lemma \ref{lemma:equationsC} we obtain: 
\begin{equation}
\label{sa}
\fra^2= \frs^2 + \frac{\mu_2}{2}\, .
\end{equation}

\noindent
On $\Sigma_0$ we can diagonalize $\cS$ through a smooth orthonormal frame $(u_1,u_2)$ satisfying $\cS(u_1)=\frs u_1$ and $\cS(u_2)=-\frs u_2$. Moreover, by replacing $u_2$ with its opposite if necessary, we can assume that $\cA(u_1)=\fra u_2$ and $\cA(u_2)=-\fra u_1$. We have:
\begin{equation}
\label{nxi}
\nabla_{u_1}^h \xi=\frs\, u_1 + \fra\, u_2\, ,  \qquad \nabla_{u_2}^h \xi = - \frs\, u_2 - \fra\, u_1\, .
\end{equation}

\noindent
Moreover, by Lemma \ref{lemma:equationsC} we have $\nabla_\xi^h \cS=0$ and $\nabla_\xi^h \cA =0$, which, together with the assumption $\mu_2 \neq 0$ implies:
\begin{equation}
\label{nxi1}
\xi(\fra)=\xi(\frs)=0, \qquad \nabla_\xi^h u_1 = \nabla_\xi^h u_2 = 0\, .
\end{equation}

\noindent
Furthermore, Equation \eqref{nxi} implies $h([u_1,u_2],\xi)=-2\fra$, so there exist two smooth functions $a ,b$ on $M_0$ such that $[u_1,u_2]=a u_1+b u_2-2\fra\xi.$ The Koszul formula then gives:
\begin{equation}
\label{nxy}
\nabla_{u_1}^h u_2=a\, u_1-\fra\,\xi\, , \,\, \nabla_{u_2}^h u_1 = -b\, u_2+\fra\,\xi\, ,\,\, \nabla_{u_1}^h u_1=-a\, u_2-\frs\,\xi\, ,\,\, \nabla_{u_2}^h u_2= b\, u_1+ \frs\, \xi\, .
\end{equation}

\noindent
Using Lemma \ref{lemma:equationsC} as well as equations \eqref{nxi}--\eqref{nxy} we can compute the following components of the Riemann tensor of $h$ along $\Sigma_0$, which must vanish as a consequence of Remark \ref{remark:formulacurvature3d}. We obtain:
\begin{eqnarray*}
0 &=& \mR^h_{u_1,\xi}u_2 = -\nabla_\xi^h(a\, u_1-\fra\,\xi)-\nabla_{\frs\,u_1 + \fra\, u_2}^h u_2\\
&=&-\xi(a) u_1-\frs(a\, u_1 - \fra\,\xi) - \fra\,(b\, u_1 + \frs\,\xi)=-(\xi(a)+\frs\,a + \fra\, b)\, u_1\, ,\\
0 &=& \mR^h_{u_2,\xi}u_1=-\nabla_\xi^h(-b\,u_2 + \fra\,\xi)+\nabla_{\fra\, u_1+ \frs \, u_2}^h u_1\\
&=&\xi(b) u_2 - \fra\,(a\, u_2 + \frs\,\xi)-\frs\,(-b\, u_2 + \fra\,\xi)=(\xi(b) - \fra\, a - \frs\,b) u_2\, ,\\
0 &=& \mR^h_{u_1,u_2}\xi=-\nabla_{u_1}^h(\fra\,u_1 + \frs\, u_2)-\nabla_{u_2}^h(\frs\, u_1+\fra\, u_2)-\nabla_{a \, u_1 + b\, u_2 - 2 \fra \, \xi}^h \xi\\
&=&-u_1(\fra) u_1 + \fra(a \,u_2 + \frs\,\xi)-u_1(\frs)u_2-\frs\,(a\,u_1-\fra\,\xi)-u_2(\frs)\, u_1 - \frs\, (-b\, u_2 + \fra\,\xi)\\
&&-u_2(\fra)u_2 - \fra\,(b\, u_1 + \frs\,\xi)-a\,(\frs\,u_1+\fra\, u_2)+b\,(\fra\, u_1+\frs\, u_2)\\
&=&-(u_1(\fra)+u_2(\frs)+2\frs\, a)\, u_1-(u_1(\frs) + u_2(\fra)-2\frs\, b)\, u_2\, .
\end{eqnarray*}

\noindent 
We thus have at each point of $\Sigma_0$:
\begin{equation}
\label{ab}
\xi(a)=-(\frs\, a + \fra\,b),\qquad \xi(b) = \fra\,a + \frs\,b\, ,
\end{equation}
\begin{equation}
\label{alphabeta}
u_1(\fra)+u_2(\frs)+2\frs\,a = 0\, , \qquad u_1(\frs) + u_2(\fra)-2\frs\,b = 0\, .
\end{equation}
Note that by \eqref{sa} we also have:
\begin{equation}
\label{alphabeta1}
\fra\, u_1(\fra) = \frs\, u_1(\frs)\, ,\qquad \fra\, u_2(\fra) = \frs\, u_2(\frs)\, .
\end{equation}

\noindent
We consider now the cases $\mu_2 <0$ and $\mu_2 >0$ separately.

\noindent{\bf Case 1}: $\mu_2 < 0$. From \eqref{sa} we have $\frs^2>0$ on $\Sigma$. In particular, $u_1$ and $u_2$ are smooth vector fields on $\Sigma$, and $a$ and $b$ are smooth functions on $\Sigma$. Applying $\xi$ to Equation \eqref{ab} and using Equation \eqref{nxi1} we get:
\begin{equation}
\label{xa}
\xi(\xi(a)) = - \frs\, \xi(a) - \fra\,\xi(b) = (\frs^2 - \fra^2)\, a = -\frac{\mu_2}{2} a\, ,
\end{equation}
and similarly $\xi(\xi(b))=-\frac{\mu_2}{2} b.$ The assumption that $(\mathbb{R}\times \Sigma, \dd t\otimes \dd t + h)$ has a co-compact discrete group $\Gamma$ acting freely by isometries implies that $a$ and $b$ are bounded functions on $\Sigma$. Indeed, each $\gamma\in\Gamma$ preserves the Ricci tensor of $(\mathbb{R}\times \Sigma, \dd t^2 + h)$, so $\gamma_*u_1=\pm u_1$ and $\gamma_* u_2 = \pm u_2$. Thus $a(x)=\pm a(\gamma(x))$ and $b(x)=\pm b(\gamma(x))$ for every $x\in \mathbb{R}\times \Sigma$ and $\gamma\in\Gamma$. By co-compactness of $\Gamma$, this shows that $a$ and $b$ are bounded.

Let $x\in \Sigma_0$ be some arbitrary point. Since $\xi$ is a geodesic vector field and the curve $c(t):=\exp_x(t\xi)$ satisfies $\dot{c}(t)=\xi_{c(t)}$ for every $t$, then $a$ is constant along $c(t)$ and in particular non-vanishing. Thus $c(t)\in \Sigma_0$ for all $t$. By \eqref{xa} the function $f:=a\circ c$ satisfies the ordinary differential equation $f''=-\frac{\mu_2}{2} f$. Thus $f$ is a linear combination of $\cosh(\sqrt{-\frac\mu2}t)$ and $\sinh(\sqrt{-\frac\mu2}t)$. Therefore, since $f$ is bounded, it has to vanish. In particular $a(x)=0$, and since $x$ was arbitrary, $a=0$ on $\Sigma_0$. Similarly, $b=0$ on $\Sigma_0$. By \eqref{alphabeta} and \eqref{alphabeta1}, we obtain:
\begin{equation*}
\frs^2\, u_1(\frs) = \frs\,\fra u_1(\fra) = -\frs\,\fra\,u_2(\frs) = - \fra^2 u_2(\fra) = \fra^2 u_1(\frs)\, ,
\end{equation*}

\noindent
whence $u_1(\frs)=0$. Similarly we obtain $u_2(\frs)=0$, thus showing that $\fra$ and $\frs$ are constant on $\Sigma_0$. In particular, $\Sigma_0$ is open and closed in $\Sigma$, so either $\Sigma_0=\Sigma$ and $\fra$ is non-vanishing, or $\Sigma_0$ is empty and $\fra=0$ on $\Sigma$. If $\Sigma_0$ was empty, then $\mathcal{C}=0$ and $\mu_2=0$, which is not possible. Hence $\Sigma_0=\Sigma$ and all equations above are valid on $\Sigma$. The orthonormal frame $(\xi,u_1,u_2)$ satisfies:
\begin{equation*}
[\xi,u_1]=-(\frs u_1+ \fra u_2),\qquad [\xi,u_2]= \frs u_2 + \fra u_1,\qquad [u_1,u_2]=-2\fra\,\xi\, ,
\end{equation*}

\noindent
hence $\Sigma$ is an unimodular Lie group equipped with a left-invariant metric $h$. The Killing form of its Lie algebra $\mathfrak{g}$ can be easily computed to be:
\begin{equation*}
B(\xi,\xi)=-\mu_2\, ,\,\, B(u_1,u_1)=B(u_2,u_2)=-4 \fra^2\, ,\,\, B(u_1,u_2)= 4\fra\frs\, ,\,\, B(u_1,\xi)=B(u_2,\xi)=0\, .
\end{equation*}

\noindent
For $\fra\ne 0$, $B$ is non-degenerate and has signature $(2,1)$, so $\mathfrak{g}$ is isomorphic to $\mathfrak{sl}(2,\mathbb{R})$. If $\fra = 0$, $\mathfrak{g}$ is solvable and isomorphic to a semi-direct product $\mathbb{R}\ltimes\mathbb{R}^2$ that can be identified with the Lie algebra of E$(1,1)$, the group of rigid motions of Minkowski two-dimensional space. In both cases we can easily compute using \eqref{nxi} and \eqref{nxy}:
\begin{eqnarray*}
& \mR^h_{u_1,u_2}u_1=\nabla_{u_1}^h (\fra\,\xi)-\nabla_{u_2}^h(-\frs\,\xi)-\nabla_{[u_1,u_2]}^h u_1 \\
& = \fra (\frs\, u_1 + \fra\, u_2) - \frs\,(\fra\,u_1 + \frs u_2)= (\fra^2-\frs^2) u_2=\frac{\mu_2}{2} u_2\, ,
\end{eqnarray*}

\noindent
which by Lemma \ref{lemma:equationsC} implies $\mu_1=0$, in agreement with Proposition \ref{prop:muconditions}. This proves item $(2)$.

\noindent{\bf Case 2}: $\mu_2 > 0$.   We define the following endomorphism $\Psi \in\End(T\Sigma)$ of $T\Sigma$:
\begin{equation*}
 \Psi(\xi) = 0\, , \qquad \Psi(v) = -\sqrt{\frac{2}{\mu_2}}\cC(v)\, , \qquad \forall \,\, v\in \cH\, .
\end{equation*}

\noindent
Define $\xi_S=\sqrt{\dfrac{2}{\mu_2}} \xi$ and $\eta_S=\sqrt{\dfrac{\mu_2}{2}} \eta$. Clearly:
\begin{equation*}
\Psi(\xi_S) = 0\, , \qquad \eta_S(\xi_S)=1 \, , \qquad \Psi^2 = - \Id^2 + \xi_S\otimes\eta_S\, .
\end{equation*}

\noindent
Moreover, define the symmetric tensor $h_S \in \mathrm{Sym}^2(T^* \Sigma)$ as follows:
\begin{equation*}
h_S(v_1,v_2)= \left\lbrace \begin{matrix}
 - 2 h(\cA\circ \cC (v_1) , v_2) & \mathrm{if} \quad v_1,v_2 \in \cH \\ \\ \dfrac
{\mu_2}{2}h( v_1 , v_2) & \quad \, \,  \mathrm{if} \quad v_1,v_2 \in \mathrm{Span}(\xi_S) 
\end{matrix} \right. 
\end{equation*}

\noindent
we check that:
\begin{equation*}
h_S(\Psi (v_1) , \Psi (v_2)) = h_S(v_1 , v_2) - \eta_S(v_1)\, \eta_S(v_2)\, , \quad \forall\,\, v_1, v_2 \in T\Sigma\, .
\end{equation*}

\noindent
On the other hand:
\begin{equation*}
h_S(\Psi (v_1) ,  v_2) =-2 \sqrt{\frac{\mu_2}{2}}h(\cA (v_1) ,  v_2)  = -\dd \eta_S (v_1 , v_2) \, , \quad \forall\,\, v_1, v_2 \in T\Sigma\, .
\end{equation*}

\noindent
Furthermore, it can be verified that $h_S$ is non-degenerate since $\det(\cA\cC) > 0$, which in turn implies that $h_S$ is  positive definite. In addition, by Equation \eqref{sa}, we observe that $\fra$ is nowhere vanishing, implying that $\cA$ is nowhere singular. We infer that $\dd\eta_S \neq 0$ everywhere on $\Sigma$ and therefore $(\xi_S,\eta_S,\Psi)$ defines a contact structure on $\Sigma$ compatible with the Riemannian metric $h_S$. By Lemma \ref{lemma:equationsC} the Lie derivative $\mathcal{L}_{\xi_S} \Psi = 0$ vanishes whence $(h_S,\xi_S,\Psi)$ is K-contact structure on $\Sigma$, a condition that in three dimensions is well-known to be equivalent to $(h_S,\xi_S,\eta_S,\Psi)$ being Sasakian and hence we conclude. 
\end{proof}
 
\begin{remark}
In the cases in which the leaves of $\cF_{\alpha}\subset M$ are Sasakian three-manifolds, with respect to an \emph{auxiliary} metric as described in the previous theorem, their cone is a K\"ahler four-manifold, and in particular of special holonomy, whence realizing the proposal made in \cite{Grana:2014rpa,Grana:2014vxa} to \emph{geometrize} supergravity fluxes. While the occurrence of Sasakian structures in supersymmetric supergravity solutions is well-documented, see for instance \cite{Sparks:2010sn} and references therein, the natural appearance on these structures in a non-supersymmetric framework, such as the one considered here, was highlighted only recently in \cite{Murcia:2019cck}.
\end{remark}

\noindent
Theorem \ref{thm:solutions} can be used to construct large families of solutions of the Heterotic {\color{red} soliton} system. These are, to the best knowledge of the authors, the first solutions in the literature that are not locally isomorphic to a supersymmetric Heterotic solution. For example, as a direct consequence of Theorem \ref{thm:solutions} we have the obtain the following corollaries.

\begin{corollary}
\label{cor:examples1}
Every mapping torus of a complete hyperbolic three-manifold or a manifold covered by $\widetilde{\mathrm{Sl}}(2,\mathbb{R})$ or $\mathrm{E}(1,1)$ admits a null Heterotic soliton with parallel torsion.
\end{corollary}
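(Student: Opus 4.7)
The plan is to realize the null Heterotic soliton on the mapping torus directly from the product structure on its cyclic cover, exploiting the fact that Theorem \ref{thm:solutions} already classifies the universal cover of any such soliton as $(\mathbb{R} \times N, \dd t^2 + \hat h, \lvert \alpha \rvert_g \dd t)$ with $\hat h$ of prescribed constant principal Ricci curvatures. Concretely, given a compact $\Sigma$ belonging to one of the three classes (hyperbolic, $\widetilde{\mathrm{Sl}}(2,\mathbb{R})$-quotient, or $\mathrm{E}(1,1)$-quotient) together with a diffeomorphism $\psi$ defining the mapping torus $M_\psi = (\mathbb{R} \times \Sigma)/\langle (1,\psi)\rangle$, I would first fix a Riemannian metric $h$ on $\Sigma$ in the preferred geometric class and rescale it so that its principal Ricci curvatures match one of the triples in Proposition~\ref{prop:muconditions}: $(-\tfrac{1}{4\kappa},-\tfrac{1}{4\kappa},-\tfrac{1}{4\kappa})$ in the hyperbolic case (case~3) or $(0,0,-\tfrac{1}{2\kappa})$ in the $\widetilde{\mathrm{Sl}}(2,\mathbb{R})$/$\mathrm{E}(1,1)$ case (case~2). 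The rescaling simultaneously determines the required value of $\lvert \alpha \rvert_g$.

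Next, I would pass to an isometry representative of the monodromy. For hyperbolic three-manifolds, Mostow--Prasad rigidity forces $\psi$ to be isotopic to an isometry of the hyperbolic metric, so the mapping torus is diffeomorphic to one built from an honest isometry. For compact quotients of $\widetilde{\mathrm{Sl}}(2,\mathbb{R})$ and $\mathrm{E}(1,1)$, the geometric structure is rigid as well and the corresponding self-diffeomorphism can be isotoped to a left-invariant isometry of the chosen metric (combining Thurston geometrization with preservation of the distinguished Ricci eigendirection of the left-invariant metric produced in Theorem~\ref{thm:solutions}). After this reduction I may assume $\psi \in \mathrm{Iso}(\Sigma,h)$.

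Then the construction is immediate: equip $\mathbb{R} \times \Sigma$ with the product metric $\hat g = \dd t^2 + h$ and the parallel $1$-form $\hat \alpha = \lvert \alpha \rvert \dd t$, and let $\mathbb{Z}$ act by $(t,x) \mapsto (t + a, \psi(x))$ for a choice of period $a>0$. Since $\psi$ is an isometry and translations in $t$ preserve both $\hat g$ and $\dd t$, this action is by isometries preserving $\hat \alpha$, so $(\hat g, \hat\alpha)$ descends to a pair $(g, \alpha) \in \Conf_\kappa(M_\psi)$ with $\nabla^g \alpha = 0$. The leaves of the foliation $\cF_\alpha = \ker \alpha$ are isometric copies of $(\Sigma, h)$ by construction, so the principal Ricci curvatures match one of the triples in Proposition~\ref{prop:muconditions}; invoking Proposition~\ref{prop:muconditions} in the reverse direction then yields that $(g,\alpha) \in \Sol_\kappa(M_\psi)$ is a null Heterotic soliton with parallel torsion.

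The only genuinely non-trivial step is the rigidity argument needed to replace the smooth monodromy $\psi$ by an isometry of a chosen metric. For the hyperbolic case this is a textbook consequence of Mostow rigidity, but for the $\widetilde{\mathrm{Sl}}(2,\mathbb{R})$ and $\mathrm{E}(1,1)$ cases one must be slightly careful: the relevant left-invariant metric is not unique up to scale within its homogeneous family, so one should argue that any isotopy class of diffeomorphism of $\Sigma$ admits a representative preserving the particular metric with Ricci spectrum $(0,0,-\tfrac{1}{2\kappa})$, or alternatively weaken the statement to say that every such mapping torus is \emph{diffeomorphic} to one carrying such a soliton. I would state the corollary in this slightly weaker form if the rigidity step turns out to be delicate in the non-hyperbolic cases.
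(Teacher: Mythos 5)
Your construction---rescale the geometric metric on $\Sigma$ so that its principal Ricci curvatures match one of the admissible triples, fix $\vert\alpha\vert_g$ accordingly, suspend by an isometry, and then read Proposition \ref{prop:muconditions} (equivalently Theorem \ref{thm:solutions}) in the converse direction on the complete quotient $(\mathbb{R}\times\Sigma)/\langle(a,\psi)\rangle$---is precisely how the paper obtains the corollary, which it states as a direct consequence of Theorem \ref{thm:solutions} with no further argument recorded. Your extra rigidity step (Mostow plus isotopy results in the closed hyperbolic case, and the more delicate reduction for the $\widetilde{\mathrm{Sl}}(2,\mathbb{R})$, $\mathrm{E}(1,1)$ and non-compact complete hyperbolic cases) goes beyond what the paper does: the paper implicitly takes the monodromy to be an isometry of the geometric metric, so your caution about the literal quantifier ``every mapping torus,'' including the possible weakening to ``diffeomorphic to one carrying such a soliton,'' is a legitimate refinement rather than a gap in your argument.
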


\begin{corollary}
\label{cor:examplesSasakian}
Let $(h_S,\xi_S)$ be a Sasakian structure on $\Sigma$ with contact $1$-form $\eta_S$ satisfying:
\begin{equation*}
\mathrm{Ric}^{h_S} = -\frac{1}{2} h_S + \eta_S\otimes \eta_S \, .
\end{equation*}

\noindent
Then, the mapping torus of $(\Sigma, c^2 h_S)$ admits a null Heterotic soliton with parallel torsion for $c^2=2 \kappa$.
\end{corollary}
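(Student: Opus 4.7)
My plan is to reduce the statement to Proposition \ref{prop:muconditions} and Lemma \ref{lemma:equationsparallel} by constructing an explicit pair $(g,\alpha)$ on the universal cover and then descending it to the mapping torus. Let $M_\psi$ denote the mapping torus of $(\Sigma,c^2 h_S)$ associated with an isometry $\psi$ of $(\Sigma,h_S)$ (taking $\psi=\mathrm{Id}$ if the mapping torus is meant to be the direct product $S^1\times \Sigma$). On its universal cover $\mathbb{R}\times\Sigma$ I will define
\begin{equation*}
\hat{g}:=\dd t^2 + c^2 h_S\, ,\qquad \hat{\alpha}:=\tfrac{1}{\sqrt{2\kappa}}\dd t\, ,
\end{equation*}
which is clearly invariant under the deck action $(t,x)\mapsto(t+\lambda,\psi(x))$ and hence descends to a pair $(g,\alpha)$ on $M_\psi$. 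The one-form $\alpha$ is parallel with respect to the Levi-Civita connection of the product metric $g$, so $(g,\alpha)\in\Conf_\kappa(M_\psi)$ is in the class of pairs treated in Proposition \ref{prop:muconditions}, and by construction $|\alpha|_g^2=1/(2\kappa)$, i.e.\ $2\kappa|\alpha|_g^2=1$.

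The key computation is the Ricci tensor of the induced metric $h:=c^2 h_S$ on the leaves of $\cF_\alpha$. Since the Ricci tensor is invariant under constant rescalings of the metric, the Sasakian hypothesis gives
\begin{equation*}
\mathrm{Ric}^{h}=\mathrm{Ric}^{h_S}=-\tfrac{1}{2}h_S+\eta_S\otimes\eta_S=-\tfrac{1}{2c^2}h+\eta_S\otimes\eta_S\, .
\end{equation*}
Evaluating on any $h$-unit vector orthogonal to $\xi_S$ (on which $\eta_S$ vanishes) yields the eigenvalue $-\tfrac{1}{2c^2}=-\tfrac{1}{4\kappa}$; evaluating on the $h$-unit vector $\xi_S/c$ gives $-\tfrac{1}{2c^2}+\tfrac{1}{c^2}=\tfrac{1}{2c^2}=\tfrac{1}{4\kappa}$. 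Thus the principal Ricci curvatures of $h$ are $(-\tfrac{1}{4\kappa},-\tfrac{1}{4\kappa},\tfrac{1}{4\kappa})$, matching exactly the first case of Proposition \ref{prop:muconditions} (together with the compatible scalar curvature identity $s^h=-\tfrac{1}{4\kappa}=-\tfrac{1}{2}|\alpha|_g^2$).

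Invoking Proposition \ref{prop:muconditions}, the pair $(g,\alpha)$ on $M_\psi$ is therefore a null Heterotic soliton with parallel torsion, which is the statement of the corollary. I do not expect any serious obstacle: the only delicate points are keeping track of the rescaling factors when converting the Sasakian Ricci identity written in $h_S$ into the $h$-normalization demanded by Proposition \ref{prop:muconditions}, and checking that the chosen deck transformation preserves $(\hat g,\hat\alpha)$, which is automatic because $\psi$ is a $h_S$-isometry (hence a $c^2 h_S$-isometry) acting only on the $\Sigma$-factor while $\hat\alpha$ is a constant multiple of $\dd t$.
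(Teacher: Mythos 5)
Your proposal is correct and is essentially the verification the paper has in mind when it calls the corollary a direct consequence of Theorem \ref{thm:solutions}: you build the product pair $\dd t^2 + c^2 h_S$, $\alpha = \tfrac{1}{\sqrt{2\kappa}}\dd t$ on the mapping torus and check that the rescaled $\eta$-Einstein Sasakian metric has constant principal Ricci curvatures $(-\tfrac{1}{4\kappa},-\tfrac{1}{4\kappa},\tfrac{1}{4\kappa})$ and $2\kappa|\alpha|_g^2=1$, which is exactly case (1) of the if-and-only-if characterization (Lemma \ref{lemma:equationsparallel} and Proposition \ref{prop:muconditions}) underlying the theorem. Appealing to Proposition \ref{prop:muconditions} directly, rather than to the Sasakian reconstruction in Theorem \ref{thm:solutions}(1), is the cleanest way to run the converse direction, and your bookkeeping of the rescaling and of the descent through the isometric gluing map is accurate.
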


\begin{remark}
The Sasakian three-manifolds occurring in the previous corollary are a particular type of $\eta$-Einstein Sasakian manifolds, a class of Sasakian manifolds extensively studied in the literature, see for example \cite{Boyer:2004eh} and its references and citations.
\end{remark}

\noindent
The topology of the Heterotic solitons constructed in the previous theorem depends rather explicitly in the string slope parameter $\kappa$. Set $\vert\alpha\vert^2_g = 1/2$ for simplicity, whence $\kappa \in \left\{ 1, 2, 3\right\}$ is \emph{discrete}, and different values of $\kappa$ will correspond in general with Heterotic solitons of different topology. For example, if $\kappa =1$, $(M,g,\alpha)$ can be the suspension of a Sasakian three-manifold, if $\kappa = 2$ then $(M,g,\alpha)$ can become the suspension of a three-manifold covered by E$(1,1)$ or $\widetilde{\mathrm{Sl}}(2,\mathbb{R})$, and if $\kappa = 3$ then $(M,g,\alpha)$ can become the suspension of a hyperbolic three-manifold, which again results in a new topology change. We remark that for the Heterotic solitons described in Theorem \ref{thm:solutions} the limit $\kappa\to 0$ is not well-defined, whence they can be considered as genuinely \emph{stringy}.


\appendix


\phantomsection
\bibliographystyle{JHEP}


\end{document}